\renewcommand{\(}{\left(}
\renewcommand{\)}{\right)}
\newtheorem{thm}{Theorem}
\newtheorem{prop}{Proposition}
\newtheorem{lemma}{Lemma}
\newtheorem{cor}{Corollary\!\!}
\newtheorem{ncor}{Corollary}
\theoremstyle{definition}
\newtheorem{df}{Definition}
\newtheorem{ex}{Example}
\theoremstyle{remark}
\newtheorem{rem}{Remark\!\!}
\newtheorem{nrem}{Remark}
\renewcommand{\Pr}{\mathbb{P}}
\newcommand{\bth}{\begin{thm}} 
\newcommand{\bl}{\begin{lemma}} 
\newcommand{\el}{\end{lemma}} 
\newcommand{\bp}{\begin{prop}} 
\newcommand{\ep}{\end{prop}} 
\newcommand{\bdf}{\begin{df}} 
\newcommand{\edf}{\end{df}} 
\newcommand{\brem}{\begin{rem}} 
\newcommand{\erem}{\end{rem}} 
\newcommand{\bnrem}{\begin{nrem}} 
\newcommand{\enrem}{\end{nrem}} 
\newcommand{\bex}{\begin{ex}} 
\newcommand{\eex}{\end{ex}} 
\newcommand{\bcor}{\begin{cor}} 
\newcommand{\ecor}{\end{cor}} 
\newcommand{\bncor}{\begin{ncor}} 
\newcommand{\encor}{\end{ncor}} 
\newcommand{\bpf}{\begin{proof}} 
\newcommand{\epf}{\end{proof}}
\newcommand{\D}{\displaystyle}
\begin{document} 

\title{Counting embeddings of rooted trees into families of rooted trees} 
\author[B. Gittenberger, Z. Go{\l}\k{e}biewski, I. Larcher, M. Sulkowska]{Bernhard Gittenberger,
Zbigniew Go{\l}\k{e}biewski, \\ Isabella Larcher, \and Ma{\l}gorzata Sulkowska} 
\thanks{This research has been supported by the \"OAD, grant PL04-2018, and Wroc{\l}aw University of Science and Technology grant 0401/0052/18.
} 
\address{Department of Discrete Mathematics and Geometry, Technische  
Universit\"at Wien, Wiedner Hauptstra\ss e 8-10/104, A-1040 Wien, Austria.}
\email{gittenberger@dmg.tuwien.ac.at} 
\address{
Department of Fundamentals of Computer Science, Wroc{\l}aw University of Science and Technology, ul. Wybrze{\.z}e
Wyspia{\'n}skiego 27, 50-370, Wroc\l aw, Poland.}
\email{zbigniew.golebiewski@pwr.edu.pl}
\address{Department of Discrete Mathematics and Geometry, Technische
Universit\"at Wien, Wiedner Hauptstra\ss e 8-10/104, A-1040 Wien, Austria.}
\email{isabella.larcher@tuwien.ac.at}
\address{
Department of Fundamentals of Computer Science, Wroc{\l}aw University of Science and Technology, ul. Wybrze{\.z}e
Wyspia{\'n}skiego 27, 50-370, Wroc\l aw, Poland.\newline
\hspace*{8pt} Universit{\'e} C{\^o}te d'Azur, CNRS, Inria, I3S, France.}
\email{malgorzata.sulkowska@pwr.edu.pl}


\date{\today}

\begin{abstract} 
The number of embeddings of a partially ordered set $S$ in a partially ordered set $T$ is the
number of subposets of $T$ isomorphic to $S$. If both, $S$ and $T$, have only one unique maximal
element, we define good embeddings as those in which the maximal elements of $S$ and $T$ overlap.
We investigate the number of good and all embeddings of a rooted poset $S$ in the family of all
binary trees on $n$ elements considering two cases: plane (when the order of descendants matters)
and non-plane. Furthermore, we study the number of embeddings of a rooted poset $S$ in the family
of all planted plane trees of size $n$. We derive the asymptotic behaviour of good and all embeddings in all cases and we prove that the ratio of good embeddings to all is of the order $\Theta(1/\sqrt{n})$ in all cases, where we provide the exact constants. Furthermore, we show that this ratio is asymptotically non-decreasing with $S$. Finally, we comment on the case when $S$ is disconnected.
\end{abstract} 
 
\maketitle

\section{Introduction}

This paper studies the number of embeddings of a given rooted tree in the family of (plane and
non-plane) binary trees, as well as planted plane trees. Here, the notion of embedding is wider
than just a copy. We assume the investigated structures to be partially ordered sets (in short:
posets) and by saying that there exists an embedding of $S$ into $T$ we understand (in the non-plane
case) that a poset $S$ is a subposet of $T$. We distinguish between good embeddings in which the
roots of $S$ and $T$ overlap and bad embeddings in which they do not. The number of good and bad
embeddings of a rooted structure in a complete binary tree was first investigated by Morayne~\cite{Morayne_complete}. His research was motivated by optimal stopping problems. The ratio of the number of good embeddings to the number of all embeddings and its monotonicity properties were used in estimates of conditional probabilities needed to obtain an optimal policy for the best choice problem considered on a complete (balanced) binary tree. This and similar results first served just as tools but soon became interesting questions about the structural features of posets on their own and resulted in a series of self-standing papers \cite{KLM_ratio_incr,KLM_AtoB_is_2tol,Georgiou}. Counting chains and antichains in trees took a special place in this pool \cite{Kuchta_chain1,Kuchta_chain2,Kubicki_chains_antichains}.

In this paper we present a follow-up and generalization of the results obtained by Kubicki
\emph{et al.}~\cite{KLM_ratio_incr,KLM_AtoB_is_2tol} and Georgiou~\cite{Georgiou}. We give the
asymptotic behaviour of the number of good and all embeddings of a rooted tree $S$ in the family
of plane (when the order of descendants matters) and non-plane binary trees, as well as planted
plane trees, on $n$ vertices. We prove that the ratio of the number of good embeddings to the
number of all embeddings is of the order $\Theta(1/\sqrt{n})$ in all cases and provide the exact
constants. Furthermore, we show that this ratio is asymptotically non-decreasing in $S$. 
We comment also on the case where $S$ is disconnected, \textit{i.e.} a forest. In
order to obtain those results we use tools of analytic combinatorics that have not been used
before in the aforementioned papers.

The results of our paper may also be put into the framework of counting patterns in large
structures. This is a vast field where many different types of structures have been considered.
We only mention subgraph avoidance (and characterizing whole graph classes like series-parallel or
planar graphs in that way) or subgraph counts in random graphs (see \cite{JLR00, AS08}),
pattern avoidance in permutations (see \cite{B12}) or in trees (see \cite{D09}), or pattern
avoidance in lattice paths and words, where many particular patterns have been treated separately
(see \cite{D99} or the introduction of \cite{ABBG20} for a survey) and eventually put under a
unifying umbrella in \cite{ABBG20}.

The closest to the present work is pattern counting in trees.  One of the earliest investigations
of this kind was \cite{RS75}, where the enumeration of given stars as subgraphs in trees
(equivalently nodes of fixed degree) was treated. Later generalizations are found in \cite{DG99,
PS12} (multivariate setting), in \cite{li2016asymptotic} (distinct patterns) or \cite{G06} (large
patterns of that type). A method to deal with general contiguous patterns in trees by means of
generating functions was developed in \cite{CDKK08}, which was partially generalized to planar
maps recently \cite{DY18, CDK19, DS20}. Pattern avoidance in trees was the topic of \cite{R10},
where also the concept of Wilf equivalence was dealt with, which was adopted from pattern
avoidance in permutations. 

Except for permutations, where most of the patterns that have been studied so far are non-contiguous, the considered patterns in other domains are 
typically contiguous. To our knowledge, the first work considering non-contiguous patterns in
trees is \cite{DPTW12}. In the present paper, the tree which is embedded becomes in general a
collection of (partially) non-adjacent nodes in the tree where it is embedded. It can therefore
be seen as a non-contiguous pattern occurring in that tree. Thus, our paper deals with certain
enumeration problems for non-contiguous patterns in trees. 

\medskip
The paper is organized as follows. Section \ref{sec_def_not} introduces basic definitions and notation. Section~\ref{sec_stop} provides possible applications of our
results in optimal stopping problems. In Section~\ref{sec_gen_functions} we obtain generating
functions for the number of good and the number of all embeddings of a rooted tree in the family
of all plane binary trees with a given number of vertices. Section~\ref{sec_asymptotics} is
devoted to the asymptotics of this number when the size of the underlying tree is tending to infinity.
Moreover, we investigate the asymptotics as well as the asymptotic monotonicity of their ratio.
In Section~\ref{sec_disconnected} we briefly discuss the case when the embedded structure is
disconnected, \textit{i.e.} it is a forest.  Section~\ref{sec_non_plane} deals with the non-plane
binary case and in Section~\ref{sec_planted_plane} the problem is extended to planted plane trees.
A discussion of the obtained results as well as an outlook into some related future problems is
given in Section~\ref{sec_discussion}.

\section{Definitions and notation} \label{sec_def_not}

By ${\mathcal{B}}_n$ we denote the family of (unlabelled) plane binary trees with $n$ nodes. A
binary tree is a tree in which each node has either $0$ or $2$ descendants and by plane we
understand that the order of subtrees of a given node matters, \textit{i.e.} we distinguish
between the different embeddings of a tree in the plane. It is commonly known that for odd $n$
the cardinality $|{\mathcal{B}}_n|$ of ${\mathcal{B}}_n$ satisfies $|{\mathcal{B}}_n|=
\boldsymbol{C}_{\frac{n-1}{2}}$, where $\boldsymbol{C}_k$ is the $k$-th Catalan number given by
$\boldsymbol{C}_k = \frac{1}{k+1}{2k \choose k}$.  Note that all binary trees have odd sizes and
thus, for even $n$ the cardinality $|\mathcal{B}_n|$ is zero.  All plane binary trees of size
$5$ are shown in Figure \ref{fig_bin_5}. We assume also that all edges are directed towards the
descendants. Therefore, the in-degree of the root, as well as the out-degree of each leaf, is
always $0$. A vertex is said to be $d$-ary if its out-degree equals $d$. Subsequently, the root
of a tree $T$ will be denoted by $\mathds{1}_T$.

By ${\mathcal{V}}_n$ we denote the family of non-plane binary trees with $n$ nodes. By non-plane
we understand that the subtrees of a given node are treated as a set of subtrees, \textit{i.e.}
there is no ordering. E.g., there is only one non-plane binary tree of size $5$, see Figure
\ref{fig_bin_5}. Again for even $n$ the cardinality $|\mathcal{V}_n|$ is zero. For odd $n$ the values $|\mathcal{V}_n|$ are known as Wedderburn-Etherington numbers and do not have a closed form ($|\mathcal{V}_1|=1, |\mathcal{V}_3|=1, |\mathcal{V}_5|=1, |\mathcal{V}_7|=2, |\mathcal{V}_9|=3, \ldots$).

Planted plane trees (also known as Catalan trees) are rooted plane trees where each internal node
can have arbitrarily many descendants. We denote the family of planted plane trees of
size $n$ by $\mathcal{T}_n$. For all $n$ the cardinality of $\mathcal{T}_n$ satisfies $|\mathcal{T}_n| = \boldsymbol{C}_{n-1}$.

\begin{figure}[!h]
\centering
\includegraphics[scale=0.85]{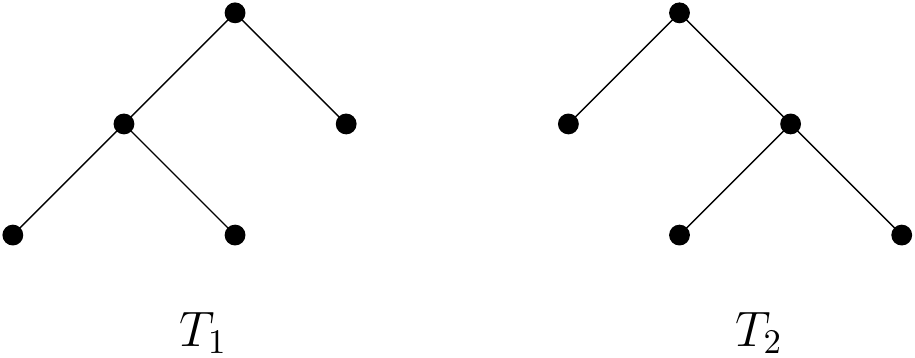}
\caption{\footnotesize The family $\mathcal{B}_5 = \{T_1, T_2\}$ of plane binary trees is of size $|\mathcal{B}_5| = \boldsymbol{C}_2 = 2$, while the family $\mathcal{V}_5 = \{T_1\}$ of non-plane binary trees has the size $|\mathcal{V}_5| = 1$.}
\label{fig_bin_5}
\end{figure}

This paper concentrates on investigating the number of embeddings of any rooted tree (or a forest of rooted trees - a disconnected graph whose components are rooted trees) in all trees from either family $\mathcal{B}_n$, $\mathcal{V}_n$ or $\mathcal{T}_n$. 
An embedding of a rooted tree $S$ into another rooted tree $T$ can be seen as a kind of
generalized pattern occurrence of $S$ in $T$, defined as follows, where we distinguish between the plane and the non-plane case.

\begin{df}[non-plane embedding]
Let $S$ and $T$ be two non-plane rooted trees.
When interpreting $T$ as the cover graph of a partially ordered set (poset), rooted at the root of
$T$, \textit{i.e.} at the single maximal element of the poset, then an embedding of $S$ into $T$ can be defined as any subposet of $T$ isomorphic to $S$. 
\end{df}

\begin{nrem}
Note that there exists a non-plane embedding of a binary tree $S$ into a binary tree $T$ if and only if $S$ is a minor of $T$.
\end{nrem}

\begin{nrem} 
Instead of starting from a tree as combinatorial structure and then interpreting it as a poset,
we may also start from posets and then define a \emph{tree poset} as a poset $P$ which has exactly
one maximal element and such that any Hasse diagram of $P$ looks like a (combinatorial) tree.
This is equivalent to the definition of a tree poset given in \cite{F20+}.\footnote{For the sake
of better distinction from a combinatorial tree, we use the term ``tree poset'' for what is simply
called ``tree'' in \cite{F20+}.} Likewise, an embedding of a tree poset $S$ into another tree
poset $T$, as defined in \cite{F20+}, matches exactly the definition of a non-plane embedding
given above. 
\end{nrem}

\begin{df}[plane embedding]
Let $S$ and $T$ be two plane rooted trees.
If we interpret $T$ to be a Hasse diagram of a poset, then an embedding of $S$ into $T$ can be
defined as any subposet of $T$ isomorphic to $S$ in which the left-to-right order of the children
of each node of $S$ is inherited from $T$ (thus, a plane version of a subposet). 
\end{df} 

\begin{nrem}
So, in the plane case $S$ and $T$ can be interpreted as Hasse diagrams of posets, and whenever $S$
can be embedded in $T$ it follows that $S$ is a subposet of~$T$. However, note that the respective
posets can possibly be represented as different Hasse diagrams in such a way that no embedding of 
the corresponding trees is possible. 
\end{nrem}

We say that an embedding of $S$ into $T$ is \textit{good} if it contains the root of $T$.
Otherwise we call it a \textit{bad embedding}. If there exists at least one embedding of $S$ into $T$, we write $S \subseteq T$. All embeddings of a cherry, (\textit{i.e.} a tree composed only of a root and its two children) in a given binary tree of size $5$ are given in Figure \ref{cherry_5}. Four of them are good and the last one is bad.

\begin{figure}[!ht]
\centering
\includegraphics[scale=0.7]{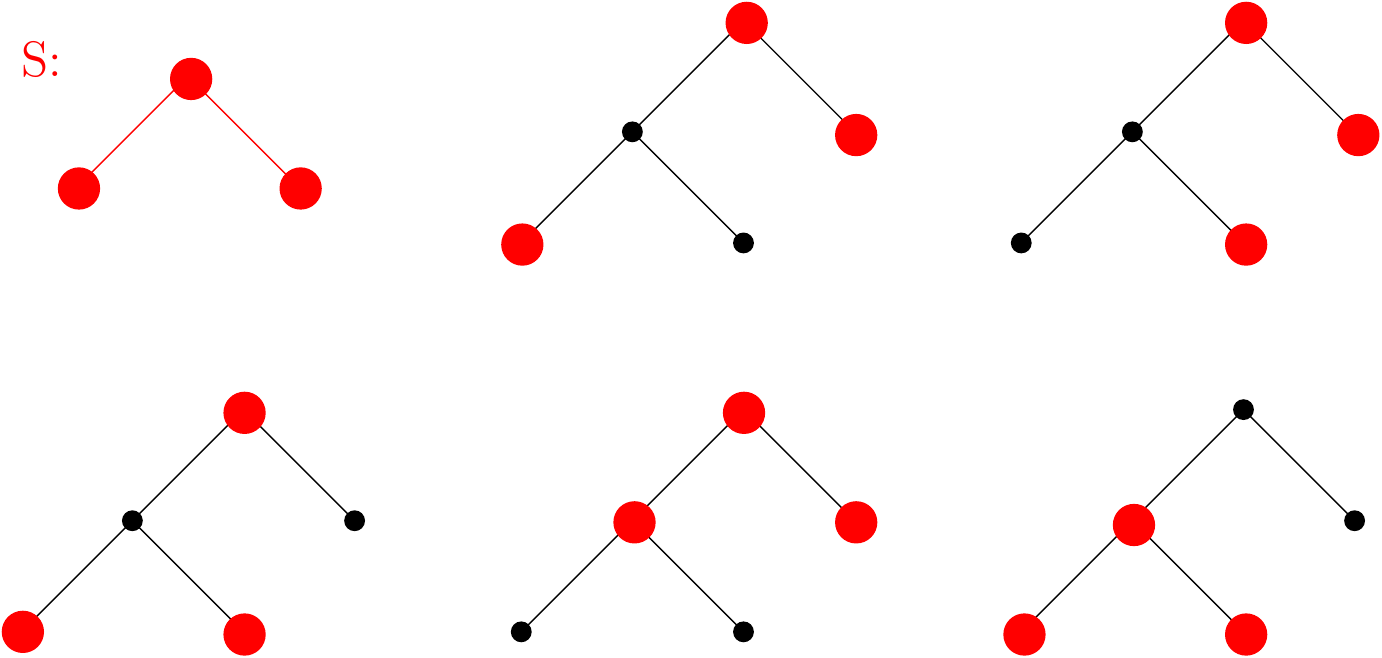}
\caption{\footnotesize All five embeddings of a cherry $S$ in a given plane binary tree of size~$5$. Or all four embeddings of a cherry $S$ in a given non-plane binary tree of size $5$, since in the non-plane case the two rightmost pictures in the upper row represent the same embedding (they can easily be mapped onto each other via a simple automorphism that changes the order of the two leftmost leaves).}
\label{cherry_5}
\end{figure}

Subsequently the size of the tree $S$ will always be denoted by $m$, while the size of~$T$ is consistently denoted by $n$. Thus, for the asymptotic analysis of the number of embeddings of a tree $S$ into a class of trees of size $n$, the quantity $m$ is considered to be a constant, while $n$ tends to infinity.

For $S$, the structure that we embed, we define its \textit{degree distribution sequence} as $d_S
= (d_0, d_1, \ldots, d_{m-1})$, where $d_i$ is the number of vertices in $S$ with out-degree equal
to $i$. Note that $d_0$ is simply the number of leaves, which will be, interchangeably, denoted by
$l$ (\textit{i.e.} $l=d_0$). Similarly, $d_1$ is the number of unary nodes, which will be,
interchangeably, denoted by $u$ (\textit{i.e.}~$u=d_1$). The number of all embeddings of a given
tree $S$ in $T$ will be denoted by $a_{T}(S)$ and the number of its good embeddings in $T$ by
$g_{T}(S)$. The number of all embeddings of $S$ in a family $\mathcal{F} = \{F_1, \ldots, F_N \}$
will be denoted by $a_{\mathcal{F}}(S)$ and understood as the cumulative number of embeddings of
$S$ into all elements of $\mathcal{F}$, \emph{i.e.} $a_{\mathcal{F}}(S) = \sum_{i=1}^{N} a_{F_i}(S)$. Analogously, we define the number of good embeddings of $S$ in $\mathcal{F}$: $g_{\mathcal{F}}(S) = \sum_{i=1}^{N} g_{F_i}(S)$. For $S$ being a cherry and $\mathcal{B}_5 = \{T_1, T_2\}$, we obtain $a_{T_1}(S) = a_{T_2}(S) = 5$, $g_{T_1}(S) = g_{T_2}(S) = 4$, thus $a_{\mathcal{B}_5}(S) = 10$ and $g_{\mathcal{B}_5}(S) = 8$ (compare Figure \ref{cherry_5}). Notations for ${\mathcal{V}_n}$ and $\mathcal{T}_n$ are analogous.

Throughout this paper we use the standard notation $f(n)\sim g(n)$ if $\lim_{n \rightarrow \infty}  \frac{f(n)}{g(n)} = 1$. Also $[z^n]f(z)$ denotes the coefficient of $z^n$ in the formal power series $f(z) = \sum_{n \geq 0} f_n z^n$, \textit{i.e.} $[z^n]\left(\sum_{n \geq 0} f_n z^n\right) = f_n$.

\section{Applications in optimal stopping problems} \label{sec_stop}
The most prominent problem in the area of optimal stopping is the so-called 
``secretary problem'' (consult~\cite{Lindley,Ferguson,freeman1983secretary,S91}), where one assumes a linear order on the applicants for a secretary position concerning their qualifications. 
The applicants are interviewed in a random order and the decision whether to hire an applicant has to be made immediately after the interview - a rejected applicant cannot be hired at a later point. Thus, if we interview all the candidates, we have to hire the last applicant.
The goal is to find the optimal stopping strategy to hire the best applicant. Thus, we want to stop at the time maximizing the probability that the present applicant is the best one overall, \textit{i.e.} the maximum element in the linear order. It has been proved (see for example \cite{Lindley, gilbert2006recognizing}) that for a large number of applicants it is optimal to wait until approximately $37 \%$ (more precisely $\frac{100}{e}  \%$) of the applicants have been interviewed and then to select the next relatively best one. This optimal algorithm returns the best applicant with asymptotic probability of $1/e$.
The secretary problem has been extended and generalized in many different directions. One of these is the
extension to partially ordered sets, possibly with more than one maximal element, see
\cite{Stadje,Gnedin}. Optimal strategies for particular posets were investigated among others in
\cite{Morayne_complete,K13}. Versions for unknown poset, when the selector knows in advance only
its cardinality, were presented in \cite{univ_poset,FW10,GM13}. Another interesting generalization
was to replace the underlying poset structure by a directed graph. This version was first
considered on directed paths by Kubicki and Morayne in \cite{dir_path} and later extended to other families of graphs and different versions of the game (consult \cite{S12,GMS15,kPaths}).

In the remainder of this section we give examples of stopping problems in which either the value $a_{\mathcal{V}_n}(S)$ or the ratio $g_{\mathcal{V}_n}(S)/a_{\mathcal{V}_n}(S)$ (both investigated in this paper) plays a crucial role in estimating the conditional probabilities needed to obtain the optimal policy. One can consider analogous examples for the families ${\mathcal{B}_n}$ or ${\mathcal{T}_n}$ as well.

Let us think about elements of $\mathcal{V}_n$ as of Hasse diagrams of posets. Consider the following process. Elements (\textit{i.e.} nodes) of some $T$ from $\mathcal{V}_n$ appear one by one in a random order (all permutations of elements of $T$ are equiprobable). At time $t$, \textit{i.e.} when $t$ elements have already appeared, the selector can see a poset induced on those elements. He knows that the underlying structure is drawn uniformly at random from $\mathcal{V}_n$.

\begin{ex}[Best choice problem for the family of binary trees]
The selector's task is to stop the process maximizing the probability that the element that has just appeared is the root of the underlying structure. He wins only if the chosen element is indeed $\mathds{1}_T$. Note that it neither pays off to stop the process when the induced structure is disconnected nor when the currently observed element is not the maximal one in the induced poset. The selector wonders whether to stop only if the emerged element at time $t$ is the unique maximal element in the induced structure. In order to take a decision whether to stop at time $t$, he needs to know the probability of winning if he stops now. Let $W_t$ denote the event of winning when stopping at time $t$, $S_t$ the event that at time $t$ he observes a certain structure $S$ with degree distribution sequence $d_S$ and $R_i$ denote the event that $T_i$ has been drawn as the underlying structure, where we use the notation $\mathcal V_n=\{T_1,\dots,T_N\}$ with $N=|\mathcal V_n|$. Then the probability of winning if he stops at time $t$ is given by
\[
\begin{split}
\Pr[W_t|S_t] & = \sum_{i=1}^{N} \Pr[W_t|S_t \cap R_i] \Pr[R_i|S_t] = \sum_{i=1}^{N} \frac{g_{T_i}(S)}{a_{T_i}(S)} \frac{\Pr[S_t|R_i]\Pr[R_i]}{\Pr[S_t]}.
\end{split} 
\]
Since $\Pr[R_i] = 1/N$, $\Pr[S_t|R_i] = a_{T_i}(S)/{n \choose t}$ and
$$
\Pr[S_t] = \sum_{i=1}^{N} \Pr[S_t|R_i] \Pr[R_i] = \sum_{i=1}^{N} \frac{a_{T_i}(S)}{{n \choose t}}\frac{1}{N} = \frac{a_{\mathcal{V}_n}(S)}{N {n \choose t}}
$$
we get
$$
\Pr[W_t|S_t] = \sum_{i=1}^{N} \frac{g_{T_i}(S)}{a_{T_i}(S)} \frac{a_{T_i}(S)}{{n \choose t}} \frac{1}{N} \frac{N {n \choose t}}{a_{\mathcal{V}_n}(S)} = \frac{g_{\mathcal{V}_n}(S)}{a_{\mathcal{V}_n}(S)}.
$$
\end{ex}

\begin{ex}[Identifying complete balanced binary trees]
The selector has to identify whether the underlying structure is a complete balanced binary tree or not. The payoff of the game, if he stops the process at time $t$, is $n-t$ if he guesses correctly and $0$ otherwise. He has to maximize the expected payoff. At moment $t$ he observes a structure $S$, which is not necessarily connected. Again, in order to make a decision whether to stop, he needs to know what is the probability that the currently observed structure is a subposet of a complete balanced binary tree. For a rooted tree $S$ this probability is given by
$$
\frac{a_{T_b}(S)}{a_{\mathcal{V}_n}(S)},
$$
where $T_b \in \mathcal{V}_n$ denotes the complete balanced binary tree of size $n$.
\end{ex}

\section{Generating functions for the number of embeddings in $\mathcal{B}_n$} \label{sec_gen_functions}
In this section we derive generating functions for the sequences $a_{\mathcal{B}_n}(S)$ and $g_{\mathcal{B}_n}(S)$, where $S$ is a given rooted plane tree of size $m$. In order to do so, we use the symbolic method (consult \cite{Flajolet_book}).

\begin{thm}
Consider a rooted tree $S$ with degree distribution sequence $d_S=(l,u,d_2, \dots, d_{m-1})$. The
generating function $A_S(z)$ of the sequence $a_{\mathcal{B}_n}(S)$, which counts the number of all embeddings of $S$ into all trees of the family $\mathcal{B}_n$ is given by
$$
{A_S}(z) = \left( \frac{1}{1-2zB(z)}\right)^{m+l-1} z^{l+u-1} ~B(z)^{l+u}~ 2^u \prod_{i = 3}^{m-1} (\boldsymbol{C}_{i-1})^{d_i},
$$
where $B(z)$ is the generating function of the family of plane binary trees, \textit{i.e.}
$$
B(z) = \frac{1-\sqrt{1-4z^2}}{2z} = \boldsymbol{C}_0 z + \boldsymbol{C}_1 z^3 + \boldsymbol{C}_2 z^5 + \boldsymbol{C}_3 z^7 + \ldots ~.
$$
\end{thm}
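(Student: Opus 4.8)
The plan is to use the symbolic method of analytic combinatorics to construct $A_S(z)$ by describing exactly how an arbitrary embedding of $S$ into a plane binary tree $T$ of size $n$ looks, and then translating that combinatorial description directly into a product of generating functions. The core idea is that an embedding of $S$ into $T$ is a choice of a subposet of $T$ isomorphic to $S$, which means selecting $m$ nodes of $T$ together with edge-disjoint directed paths connecting them so as to realize the parent-child relations of $S$ while respecting the left-to-right order. The remaining portion of $T$ consists of ``dangling'' binary subtrees hanging off these paths and nodes, each freely chosen. So I would set up a bijective-style decomposition: the embedded image of $S$ carves $T$ into a skeleton (the images of the $m$ nodes of $S$ plus the connecting paths) and a collection of free binary subtrees attached at various slots. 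Each free subtree contributes a factor $B(z)$, and each path of the skeleton contributes a factor accounting for its internal binary branching.

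First I would analyze what happens at a single node of $S$ according to its out-degree. A node of $S$ of out-degree $i$ must be mapped to a node of $T$, and the $i$ children of $S$ must be reached via $i$ directed paths in $T$ descending from that image. For $i \geq 2$, the node's image in $T$ must itself branch, and since $T$ is binary, realizing $i$ downward directions forces a specific local binary structure; the number of ways a $d$-ary node of $S$ can be hosted is exactly $\boldsymbol{C}_{i-1}$ (the number of plane binary trees whose $i$ leaves serve as the $i$ outgoing directions), which explains the factor $\prod_{i=3}^{m-1}(\boldsymbol{C}_{i-1})^{d_i}$ and the separate factor $2^u$ for unary nodes (a unary node of $S$ has two choices of which child-direction in $T$ its single edge follows). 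Next I would account for the connecting paths: each edge of $S$ becomes a directed path in $T$ of length $\geq 1$, and along such a path every intermediate node is a binary node of $T$ with one ``free'' subtree hanging off the side not used by the path; this is precisely the source of the factors $\frac{1}{1-2zB(z)}$, where the geometric series $\sum_{k \geq 0}(2zB(z))^k$ sums over the possible lengths of each path (each internal step contributing a factor $z$ for the node, $2$ for the choice of which side continues the path, and $B(z)$ for the dangling subtree). Counting edges of $S$ gives $m-1$ such paths, and combining with the leaf count $l$ produces the exponent $m+l-1$, while the remaining powers of $z$ and $B(z)$ bookkeep the $m$ chosen nodes and the free subtrees hanging at leaves.

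The bookkeeping of exponents is where I would be most careful, since the statement packages several contributions into the compact form $z^{l+u-1}B(z)^{l+u}$ and $\left(\frac{1}{1-2zB(z)}\right)^{m+l-1}$. I would verify these by tracking, for each structural feature of $S$, exactly one generating-function factor: a $z$ for each chosen node, a $B(z)$ for each maximal free subtree, and the geometric factor for each connecting path together with its lateral dangling subtrees. A clean way to do the exponent audit is to classify nodes of $T$ in the image into those on the skeleton versus those in free subtrees, using the binary-tree identity that a tree with $k$ leaves has $k-1$ internal nodes to cross-check the totals. \textbf{The main obstacle} I anticipate is making the decomposition genuinely bijective rather than merely plausible: I must ensure that each embedding of $S$ into some $T \in \mathcal{B}_n$ corresponds to exactly one tuple (choice of node-hosting at each vertex, choice of path lengths and side-choices along each edge, choice of each dangling subtree), with no double counting and no omission, and in particular that the planarity constraint (inherited left-to-right order) is correctly enforced at branching nodes. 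Once the decomposition is established as a bijection, the translation into the claimed product formula is a routine application of the product rule for labelled or unlabelled combinatorial constructions, and the explicit form of $B(z)$ follows from the standard quadratic equation $B(z) = zB(z)^2$ shifted to account for the binary tree being counted by total nodes.
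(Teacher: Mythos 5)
Your plan is, in essence, the global version of the decomposition that the paper carries out recursively (first for unary--binary trees, then replacing each $d$-ary node by one of $\boldsymbol{C}_{d-1}$ binary hosting trees), and the ingredients you list -- geometric factors $1/(1-2zB(z))$ for paths, dangling $B(z)$-subtrees, $\boldsymbol{C}_{i-1}$ hosting shapes, the factor $2^u$ -- are the right ones. However, there is a genuine gap at exactly the point you flag as your ``main obstacle'', and the idea needed to close it is absent: the branch points of the skeleton. Two of your structural claims are false as stated. First, the image of an $i$-ary node of $S$ need \emph{not} itself branch in $T$: the divergence of the children's paths can occur any distance below the image (embed a cherry into the $5$-node tree whose left child of the root is internal, mapping the cherry's root to the root of $T$ and its leaves to the two deepest leaves; the branching happens one level below the image). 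Second, it is not true that every internal node of a connecting path carries a free side subtree: the branch points are binary nodes of $T$ both of whose children are used by the skeleton, so they carry none, and they are in general not among the $m$ chosen nodes. To get a bijection one must therefore fix a canonical position for the $d-2$ auxiliary branch points associated with a $d$-ary node (the paper places them exactly at the divergence nodes, its ``last possible position'' convention) and one must still account for their $z$-weight even though they have no dangling subtree; in the paper this is the factor $z/(1-2zB(z))$ per auxiliary node, with the nodes of each descent paired against the side trees hanging off the nodes above them. This is not cosmetic: under your bookkeeping (``a $z$ for each chosen node, a $B(z)$ for each maximal free subtree, a geometric factor per path'') the auxiliary branch points receive no weight at all, so for any $S$ containing a node of out-degree $\geq 3$ the resulting formula is off by a factor $z^{2l+u-m-1}$; e.g.\ for the $3$-star ($m=4$, $l=3$, $u=0$) one gets $2zB(z)^3/(1-2zB(z))^6$ instead of the correct $2z^2B(z)^3/(1-2zB(z))^6$. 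Note that this discrepancy vanishes exactly in the Motzkin case, which is why the naive picture looks plausible.

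Relatedly, your exponent audit ``$m-1$ edges plus $l$ leaves gives $m+l-1$'' is numerically true but does not correspond to any consistent assignment in your decomposition: structurally the $m+l-1$ geometric factors are one for the path above the embedded root, $m-1$ for the edge paths, and $l-1$ for the branch points with their approach paths (one per internal node of the hosting trees, since $\sum_{v}(\deg v-1)=l-1$ over internal $v$ of $S$), not ``one per leaf''. If instead each leaf is given the marked-tree factor $zB'(z)=B(z)/(1-2zB(z))$, as in the paper's recursion, then the path of the edge entering that leaf is already inside this factor and must not be counted again among the edge factors. Because all geometric factors are the same function, such a misattribution can still land on the correct exponent by coincidence, but the claimed bijection behind it does not exist, so the argument would not constitute a proof. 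Finally, a small slip: the functional equation for plane binary trees counted by all nodes is $B(z)=z+zB(z)^2$, not $B(z)=zB(z)^2$.
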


\begin{nrem}
Note that ${A_S}(z)$ depends only on the degree distribution sequence $d_S$, not the particular shape of $S$. Thus, as long as $d_{S_1}$ and $d_{S_2}$ are the same, $A_{S_1}(z)$ and ${A_{S_2}}(z)$ coincide even if $S_1$ and $S_2$ are not isomorphic. However, we use the subscript $S$ to provide a transparent notation.
Moreover, note that $A_S(z)$ does also depend on the tree class $\mathcal{B}_n$ in which we embed the tree~$S$. In order to avoid a large number of indices we will omit to indicate this dependence and just emphasize at this point that the generating functions $A_S(z)$ may differ according to the underlying tree classes. 
\end{nrem}

\begin{proof}
First let us recall that the class $\mathcal{B}$ of plane binary trees can be specified by
\[ \mathcal{B} = \{ \bullet \} + \{ \bullet \} \times \mathcal{B} \times \mathcal{B}, \]
since every node is either a leaf or a binary node with two attached binary trees. By means of the symbolic method (see \cite{Flajolet_book}) we can directly translate this specification into a functional equation that defines the generating function $B(z)$ of binary trees where $z$ marks the number of nodes, which gives
\[ B(z) = z + zB(z)^2.\]
Solving this equation for $B(z)$ yields the explicit formula that is given in the theorem.
Now, we start the proof of the expression of $A_S(z)$ with the case where $S$ is a Motzkin tree, \textit{i.e.} a tree where each internal node has either one or two children. The specification of the class $\mathcal M$ of Motzkin trees is 
\begin{equation}\label{motzkin}
\mathcal M=\{\bullet\}+\{\bullet\}\times \mathcal M+\{\bullet\}\times \mathcal M \times \mathcal M
\end{equation} 
and thereby we must distinguish between the three cases whether $S$ is a single node, or the root
of $S$ is a unary node, or a binary node, and hence falling into the respective
subclass of $\mathcal M$ among the subclasses that we find as summands on the right-hand side of
\eqref{motzkin}.
The generating function ${A_S}(z)$ for the number of embeddings of $S$ into the family 
$\mathcal{B}_n$ can then be recursively defined by
\begin{align}
\label{equ:GF_recusrive_A_S_planebinary}
A_{S}(z) = \begin{cases} zB'(z) & \quad \text{if} \ S = \bullet, \\ 
\frac{2 zB(z)}{1-2zB(z)} A_{\tilde{S}}(z) & \quad \text{if} \ S = (\bullet,\tilde{S}), \\
\frac{z}{(1-2zB(z))^2} {A_{{S_L}}}(z) {A_{{S_R}}}(z) & \quad \text{if} \ S = (\bullet,S_L,S_R), 
\end{cases} 
\end{align}
where the three cases correspond to the cases described above. 
The first case, which yields a factor $zB'(z)$, corresponds to marking a node in the underlying
tree $T$ (\textit{i.e.} pointing at a node), because obviously a single vertex can be embedded in
every node. 
We can also interpret it as counting the number of pairs $(T,E)$ where $E$ is an embedding of $S$
into $T$. 

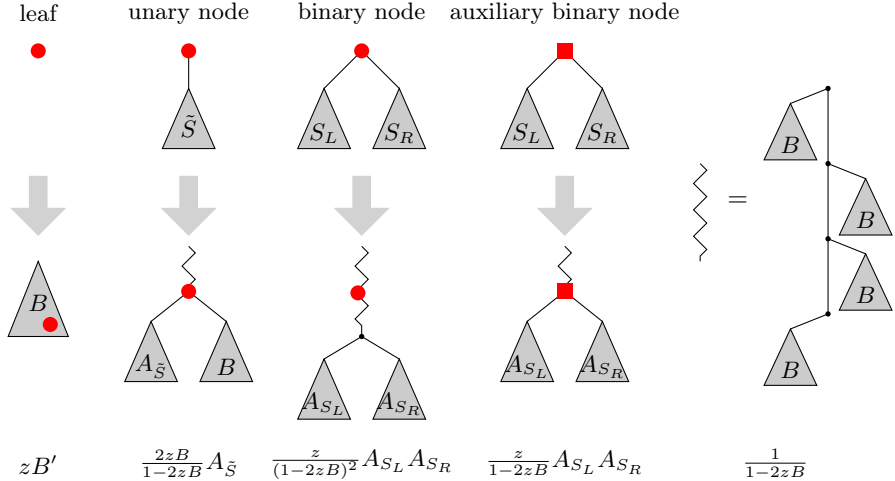
\begin{figure}[!h]
\begin{center}
\begin{tikzpicture}
\begin{scope}[xshift=-4cm]
	\draw (0,0.5) node {{\small leaf}};

	\fill[red] (0,0) circle (0.1);
	
	\node[single arrow, rotate=270, fill=gray!35, minimum height=0.8cm] at (0,-2) {};
	
	\filldraw[fill=gray!40] (0,-2.8) -- (-0.4,-3.8) -- (0.4,-3.8) -- cycle;
	\fill[red] (0.16,-3.64) circle (.1);
	\draw (0,-3.35) node {{\small $B$}};
	
	\draw (0,-5.5) node {{\small $z B'$}};
\end{scope}

\begin{scope}[xshift=-2cm]
	\draw (0,0.5) node {{\small unary node}};

	\draw (0,0) -- (0,-0.5);
	\fill[red] (0,0) circle (0.1);
	\filldraw[fill=gray!40] (0,-0.5) -- (-0.35,-1.3) -- (0.35,-1.3) -- cycle;
	\draw (0,-1) node {{\small $\tilde{S}$}};
	
	\node[single arrow, rotate=270, fill=gray!35, minimum height=0.8cm] at (0,-2) {};
	
	\draw[decorate, decoration=zigzag] (0,-2.6) -- (0,-3.2);
	\draw (0,-3.2) -- (0-0.5,-0.5-3.1);
	\draw (0,-3.2) -- (0+0.5,-0.5-3.1);
	\fill[red] (0,-3.2) circle (0.1);
	\filldraw[fill=gray!40] (0-0.5,-0.5-3.1) -- (-0.35-0.5,-1.3-3.1) -- (0.35-0.5,-1.3-3.1) -- cycle;
	\filldraw[fill=gray!40] (0+0.5,-0.5-3.1) -- (-0.35+0.5,-1.3-3.1) -- (0.35+0.5,-1.3-3.1) -- cycle;
	\draw (0-0.5,-1-3.2) node {{\small $A_{\tilde{S}}$}};
	\draw (0+0.5,-1-3.2) node {{\small $B$}};
	
	\draw (0,-5.5) node {{\small $\frac{2zB}{1-2zB} A_{\tilde{S}}$}};
\end{scope}

\begin{scope}[xshift=0.3cm]
	\draw (0,0.5) node {{\small binary node}};

	\draw (0,0) -- (0-0.5,-0.5);
	\draw (0,0) -- (0+0.5,-0.5);
	\fill[red] (0,0) circle (0.1);
	\filldraw[fill=gray!40] (0-0.5,-0.5) -- (-0.35-0.5,-1.3) -- (0.35-0.5,-1.3) -- cycle;
	\draw (0-0.5,-1.1) node {{\small $S_L$}};
	\filldraw[fill=gray!40] (0+0.5,-0.5) -- (-0.35+0.5,-1.3) -- (0.35+0.5,-1.3) -- cycle;
	\draw (0+0.5,-1.1) node {{\small $S_R$}};	
	
	\node[single arrow, rotate=270, fill=gray!35, minimum height=0.8cm] at (0,-2) {};
	
	\draw[decorate, decoration=zigzag] (0,-2.6) -- (0,-3.8);
	\fill[red] (-0.05,-3.22) circle (0.1);
	\fill[black] (0,-3.8) circle (1pt);
	\draw (0,-3.8) -- (0-0.5,-0.5-3.6);
	\draw (0,-3.8) -- (0+0.5,-0.5-3.6);
	\filldraw[fill=gray!40] (0-0.5,-0.5-3.6) -- (-0.35-0.5,-1.3-3.6) -- (0.35-0.5,-1.3-3.6) -- cycle;
	\filldraw[fill=gray!40] (0+0.5,-0.5-3.6) -- (-0.35+0.5,-1.3-3.6) -- (0.35+0.5,-1.3-3.6) -- cycle;
	\draw (0-0.5,-1-3.7) node {{\small $A_{S_L}$}};
	\draw (0+0.5,-1-3.7) node {{\small $A_{S_R}$}};
	
	\draw (0,-5.5) node {{\small $\frac{z}{(1-2zB)^2} A_{S_L} A_{S_R}$}};
\end{scope}

\begin{scope}[xshift=3cm]
	\draw (0,0.5) node {{\small auxiliary binary node}};

	\draw (0,0) -- (0-0.5,-0.5);
	\draw (0,0) -- (0+0.5,-0.5);
	\fill[red] (-0.1, -0.1) rectangle (0.1, 0.1);
	\filldraw[fill=gray!40] (0-0.5,-0.5) -- (-0.35-0.5,-1.3) -- (0.35-0.5,-1.3) -- cycle;
	\draw (0-0.5,-1.1) node {{\small $S_L$}};
	\filldraw[fill=gray!40] (0+0.5,-0.5) -- (-0.35+0.5,-1.3) -- (0.35+0.5,-1.3) -- cycle;
	\draw (0+0.5,-1.1) node {{\small $S_R$}};	
	
	\node[single arrow, rotate=270, fill=gray!35, minimum height=0.8cm] at (0,-2) {};
	
	\draw[decorate, decoration=zigzag] (0,-2.6) -- (0,-3.2);
	\draw (0,-3.2) -- (0-0.5,-0.5-3.1);
	\draw (0,-3.2) -- (0+0.5,-0.5-3.1);
	\fill[red] (-0.1,-3.3) rectangle (0.1, -3.1);
	\filldraw[fill=gray!40] (0-0.5,-0.5-3.1) -- (-0.35-0.5,-1.3-3.1) -- (0.35-0.5,-1.3-3.1) -- cycle;
	\filldraw[fill=gray!40] (0+0.5,-0.5-3.1) -- (-0.35+0.5,-1.3-3.1) -- (0.35+0.5,-1.3-3.1) -- cycle;
	\draw (0-0.5,-1-3.2) node {{\small $A_{S_L}$}};
	\draw (0+0.5,-1-3.2) node {{\small $A_{S_R}$}};
	
	\draw (0,-5.5) node {{\small $\frac{z}{1-2zB} A_{S_L} A_{S_R}$}};
\end{scope}

\begin{scope}[xshift=5.5cm]
	\draw[decorate, decoration=zigzag] (-0.7,-2+0.5) -- (-0.7,-3+0.2);
	\draw (-0.2,-2) node {$=$};
	\draw (1,-1+0.5) -- (1,-4+0.5);
	\fill[black] (1,-1+0.5) circle (1pt);
	\fill[black] (1,-2+0.5) circle (1pt);
	\fill[black] (1,-3+0.5) circle (1pt);
	\fill[black] (1,-4+0.5) circle (1pt);
	\draw (1,-1+0.5) -- (0.5,-1.2+0.5);
	\draw (1,-2+0.5) -- (1.5,-2.2+0.5);
	\draw (1,-3+0.5) -- (1.5,-3.2+0.5);
	\draw (1,-4+0.5) -- (0.5,-4.2+0.5);
	\filldraw[fill=gray!40] (0+0.5,0-0.7) -- (-0.35+0.5,-.75-0.7) -- (0.35+0.5,-0.75-0.7) -- cycle;
	\draw (0+0.5,-0.5-0.75) node {{\small $B$}};	
	\filldraw[fill=gray!40] (0+1.5,0-1.7) -- (-0.35+1.5,-0.75-1.7) -- (0.35+1.5,-0.75-1.7) -- cycle;
	\draw (0+1.5,-0.5-1.75) node {{\small $B$}};
	\filldraw[fill=gray!40] (0+1.5,0-2.7) -- (-0.35+1.5,-0.75-2.7) -- (0.35+1.5,-0.75-2.7) -- cycle;
	\draw (0+1.5,-0.5-2.75) node {{\small $B$}};
	\filldraw[fill=gray!40] (0+0.5,0-3.7) -- (-0.35+0.5,-0.75-3.7) -- (0.35+0.5,-0.75-3.7) -- cycle;
	\draw (0+0.5,-0.5-3.75) node {{\small $B$}};
	
	\draw (0.3,-5.5) node {{\small $\frac{1}{1-2zB}$}};
\end{scope}
\end{tikzpicture}
\end{center}
\caption{\footnotesize 
Sketch of the recursive construction of the generating function $A_{S}(z)$: When $S$ is a Motzkin
tree consisting of more than one vertex (plane binary case), then the first three cases above 
can appear. Here $B$ each time refers to an abstract object representing any tree from
family $\mathcal{B}_n$. If $S$ contains vertices with three or more children, then auxiliary vertices occur. They are embedded according to the fourth picture above and depicted as red squares. 
}
\label{fig:GF_all_plane_recursive}
\end{figure}

Now we show how an embedding of $S$ into $T$ can be constructed in a recursive way - see
Figure~\ref{fig:GF_all_plane_recursive} for a visualization of the used approach.  We start with
the case that the root of $S$ is a unary node. This root has to be embedded at some point in the
tree $T$. The part of $T$ that is above the embedded root of $S$ can be expressed as a path of
left-or-right trees, which contributes a factor $\frac{1}{1-2zB(z)}$. The embedded root of $S$
itself yields a factor $z$, since the generating function of an object of size one is given by
$z$. To the embedded root we have to attach an additional tree $T$ in order to create a binary
structure, yielding a factor $B(z)$, as well as the remaining tree that contains the embedding
of $\tilde{S}$. The factor $2$ that appears in the coefficient in the second case of
\eqref{equ:GF_recusrive_A_S_planebinary} indicates that we work with plane trees - the substructure $\tilde{S}$ can be embedded either in the left or in the right subtree of the unary vertex. 

The third case of \eqref{equ:GF_recusrive_A_S_planebinary}, where $S$ starts with a binary node,
is similar to the previous case. Thus, the factor $\frac{1}{(1-2zB(z))^2}$ corresponds to two
consecutive paths of left-or-right trees, which are separated by the embedded root which itself
gives the additional factor $z$. At some point the lower path splits into two subtrees containing
the embeddings of the subtrees $S_L$ and $S_R$.

By simple iteration one can see that in case of embedding a Motzkin tree $S$, the generating
function $A_S(z)$ reads as 
\begin{align}
\label{equ:GF_embedding_motzkin}
A_S(z) = \left( \frac{z}{(1-2zB(z))^2} \right)^{l-1} \left( \frac{2zB(z)}{1-2zB(z)} \right)^{u} (zB'(z))^l,
\end{align} 
where $l$ denotes the number of leaves and $u$ the number of unary nodes in $S$. The exponent $l-1$ in \eqref{equ:GF_embedding_motzkin} arises from the fact that a Motzkin tree with $l$ leaves has $l-1$ binary nodes, and for each of these nodes we get the respective factor.

Finally, we consider the general case where $S$ is an arbitrary plane tree without any restrictions on the degree distribution sequence. Then we proceed as follows. 
Every $d$-ary node with $d \geq 3$ together with its $d$ children is replaced by a binary tree
having $d$ leaves, which are then replaced by the successors of the original $d$-ary node. 
There are exactly $\boldsymbol{C}_{d-1}$ possible ways to construct such a
binary tree. Unary and binary nodes stay unaltered. Applying this for all nodes results in
constructing a Motzkin tree, called $S'$, and the number of Motzkin trees that can be 
constructed in that way is $\prod_{i = 3}^{m-1} \boldsymbol{C}_{i-1}^{d_i}$. These Motzkin trees
are then embedded with the approach described above. 

\begin{figure}[!h]
\begin{center}
\begin{tikzpicture}
	\draw (-1,-0.4) node {S:};
	\draw (0,0) -- (-0.5,-0.5);
	\draw (0,0) -- (0,-0.5);
	\draw (0,0) -- (0.5,-0.5);
	\draw (-0.5,-0.5) -- (-0.5,-1);
	\fill[red] (0,0) circle (0.1);
	\fill[red] (-0.5,-0.5) circle (0.1);
	\fill[red] (0,-0.5) circle (0.1);
	\fill[red] (0.5,-0.5) circle (0.1);
	\fill[red] (-0.5,-1) circle (0.1);
	
	\node[single arrow, rotate=220, fill=gray!35, minimum height=1.3cm] at (-1.4,-1.4) {};
	\node[single arrow, rotate=320, fill=gray!35, minimum height=1.3cm] at (1.4,-1.4) {};

	\draw (0-2.5,0-2) -- (-0.5-2.5,-0.5-2);
	\draw (0-2.5,0-2) -- (0.5-2.5,-0.5-2);
	\draw (-0.5-2.5,-0.5-2) -- (-0.5-2.5,-1-2);
	\draw (0.5-2.5,-0.5-2) -- (0.2-2.5,-1-2);
	\draw (0.5-2.5,-0.5-2) -- (0.8-2.5,-1-2);	
	\fill[red] (0-2.5,0-2) circle (0.1);
	\fill[red] (-0.5-2.5,-0.5-2) circle (0.1);
	\fill[red] (0.5-2.5-0.1,-0.5-2-0.1) rectangle (0.5-2.5+0.1,-0.5-2+0.1);
	\fill[red] (-0.5-2.5,-1-2) circle (0.1);
	\fill[red] (0.2-2.5,-1-2) circle (0.1);
	\fill[red] (0.8-2.5,-1-2) circle (0.1);
	
	\draw (0+2.5,0-2) -- (-0.5+2.5,-0.5-2);
	\draw (0+2.5,0-2) -- (0.5+2.5,-0.5-2);
	\draw (-0.5+2.5,-0.5-2) -- (-0.8+2.5,-1-2);
	\draw (-0.5+2.5,-0.5-2) -- (-0.2+2.5,-1-2);
	\draw (-0.8+2.5,-1-2) -- (-0.8+2.5,-1.5-2);
	\fill[red] (0+2.5,0-2) circle (0.1);
	\fill[red] (-0.5+2.5-0.1,-0.5-2-0.1) rectangle (-0.5+2.5+0.1,-0.5-2+0.1);
	\fill[red] (0.5+2.5,-0.5-2) circle (0.1);
	\fill[red] (-0.8+2.5,-1-2) circle (0.1);
	\fill[red] (-0.2+2.5,-1-2) circle (0.1);
	\fill[red] (-0.8+2.5,-1.5-2) circle (0.1);

	\node[single arrow, rotate=250, fill=gray!35, minimum height=1.2cm] at (-2.7,-3.8) {};
	\node[single arrow, rotate=290, fill=gray!35, minimum height=1.2cm] at (3,-3.8) {};

\begin{scope}[xshift=-3cm, yshift=-4.6cm]
	\draw[decorate, decoration=zigzag] (0,0) -- (0,-1);
	\draw[decorate, decoration=zigzag] (0,-1) -- (0,-2);
	\fill[red] (0,-1) circle (0.1);
	\fill[black] (0,-2) circle (0.07);
	\draw[decorate, decoration=zigzag] (0,-2) -- (-1,-3);
	\draw[decorate, decoration=zigzag] (0,-2) -- (1,-3);

	\draw (-1,-3) -- (-1-0.5,-0.5-3);
	\draw (-1,-3) -- (-1+0.5,-0.5-3);
	\fill[red] (-1,-3) circle (0.1);
	\filldraw[fill=gray!40] (0-1.5,-0.5-3) -- (-0.35-1.5,-1.3-3) -- (0.35-1.5,-1.3-3) -- cycle;
	\filldraw[fill=gray!40] (0-0.5,-0.5-3) -- (-0.35-0.5,-1.3-3) -- (0.35-0.5,-1.3-3) -- cycle;
	\draw (0-1.5,-1-2.95) node {{\small $B$}};
	\draw (0-0.5,-1-2.95) node {{\small $B$}};
	\fill[red] (-1.4,-4.16) circle (0.1);

	\draw (1,-3) -- (1-0.5,-0.5-3);
	\draw (1,-3) -- (1+0.5,-0.5-3);	
	\fill[red] (1-0.1,-3-0.1) rectangle (1+0.1,-3+0.1);
	
	\filldraw[fill=gray!40] (2-1.5,-0.5-3) -- (2.35-1.5,-1.3-3) -- (2-0.35-1.5,-1.3-3) -- cycle;
	\filldraw[fill=gray!40] (2-0.5,-0.5-3) -- (2.35-0.5,-1.3-3) -- (2-0.35-0.5,-1.3-3) -- cycle;
	\draw (2-1.5,-1-2.95) node {{\small $B$}};
	\draw (2-0.5,-1-2.95) node {{\small $B$}};
	\fill[red] (2-1.4,-4.16) circle (0.1);
	\fill[red] (3-1.4,-4.16) circle (0.1);
\end{scope}

\begin{scope}[xshift=4cm, yshift=-4.6cm]
	\draw[decorate, decoration=zigzag] (0,0) -- (0,-1);
	\draw[decorate, decoration=zigzag] (0,-1) -- (0,-2);
	\fill[red] (0,-1) circle (0.1);
	\fill[black] (0,-2) circle (0.07);

	\draw (0,-2) -- (0.5,-2.5);
	\filldraw[fill=gray!40] (0+0.5,-0.5-2) -- (-0.35+0.5,-1.3-2) -- (0.35+0.5,-1.3-2) -- cycle;
	\draw (0.5,-2.95) node {{\small $B$}};
	\fill[red] (0.6,-3.16) circle (0.1);

	\draw[decorate, decoration=zigzag] (0,-2) -- (-1,-3);
	\draw (-1,-3) -- (-1+0.5,-0.5-3);

	\filldraw[fill=gray!40] (0-0.5,-0.5-3) -- (-0.35-0.5,-1.3-3) -- (0.35-0.5,-1.3-3) -- cycle;
	\draw (0-0.5,-1-2.95) node {{\small $B$}};
	\fill[red] (-0.4,-4.16) circle (0.1);

	\draw[decorate, decoration=zigzag] (-1,-3) -- (-2,-4);
	\fill[red] (-1-0.1,-3-0.1) rectangle (-1+0.1,-3+0.1);	
	
	\draw (-2,-4) -- (-2-0.5,-0.5-4);
	\draw (-2,-4) -- (-2+0.5,-0.5-4);
	\fill[red] (-2,-4) circle (0.1);

	\filldraw[fill=gray!40] (0-2.5,-0.5-4) -- (-0.35-2.5,-1.3-4) -- (0.35-2.5,-1.3-4) -- cycle;
	\filldraw[fill=gray!40] (0-1.5,-0.5-4) -- (-0.35-1.5,-1.3-4) -- (0.35-1.5,-1.3-4) -- cycle;

	\draw (-1-1.5,-1-3.95) node {{\small $B$}};
	\fill[red] (-2.4,-5.16) circle (0.1);

	\draw (0-1.5,-1-3.95) node {{\small $B$}};

\end{scope}
\end{tikzpicture}
\end{center}
\caption{\footnotesize 
Sketch of the principle of embedding an arbitrary plane tree (plane binary case).}
\label{fig:principle_embedding}
\end{figure}
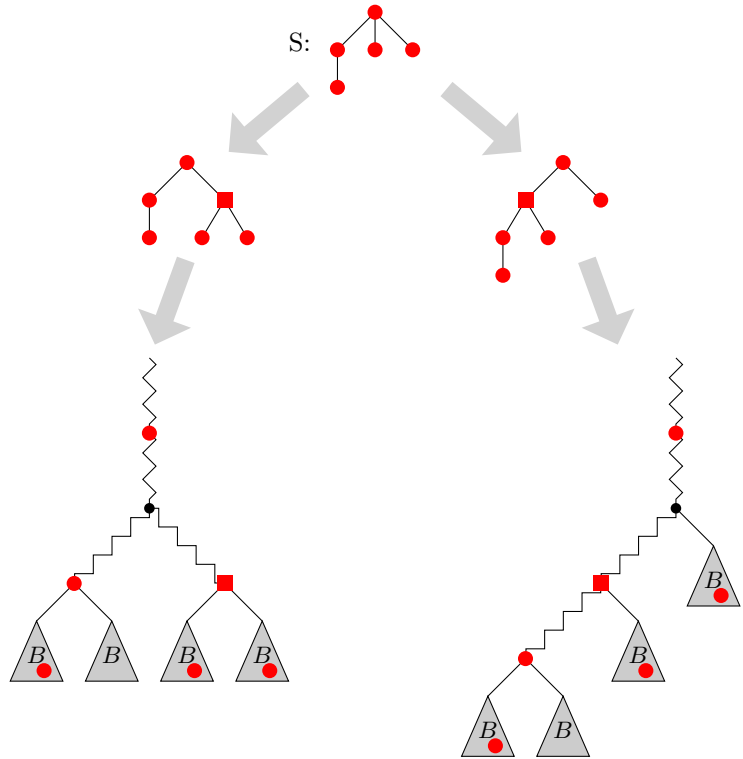
%

But note that replacing a $d$-ary node $v$ with $d \geq 3$ by a binary tree (with more than 1
internal nodes) introduced further vertices into $S$. In particular, $v$ becomes a binary node,
and as there were $d$ successors before, the binary tree arising from $v$ must have $d-1$ internal
nodes, thus giving rise to $d-2$ auxiliary vertices. Since $S$ has $m$ vertices, $S'$ has
therefore $m+\sum_{i=3}^{m-1} (i-2)d_i = 2l+u-1$ vertices.

Furthermore note that the auxiliary vertices have to be embedded into $T$, but they do not belong
to $S$. This causes a special treatment (see Figure~\ref{fig:principle_embedding} for an
illustration). Consider two subtrees $S_1$ and $S_2$ of $S'$ whose last common ancestor (in $S'$)
is an auxiliary vertex, say $w$, and let $v$ be the predecessor of $w$ in $S'$. Of course, any
vertex on the path from $v$ to $w$ may serve as auxiliary vertex instead of $w$. But as $w$ does
not belong to $S$, its actual position is unimportant. Hence, for the sake of not overcounting,
the auxiliary vertices are always placed at the last possible position, see
Figures~\ref{fig:GF_all_plane_recursive} and~\ref{fig:principle_embedding}. This eventually yields
a factor $z/(1-2zB(z))$ for each binary auxiliary node. As there are $2l+u-m-1$ auxiliary nodes
and $m-l-u$ other binary nodes, this gives altogether

\begin{align*}
{A_S}(z) &= \left( \frac{z}{1-2zB(z)} \right)^{2l+u-m-1} \left( \frac{z}{(1-2zB(z))^2}
\right)^{m-l-u} \left( \frac{2zB(z)}{1-2zB(z)} \right)^{u} (zB'(z))^l \prod_{i =
3}^{m-1}(\boldsymbol{C}_{i-1})^{d_i} \\
&= \frac{2^u z^{l+u-1}B(z)^u}{(1-2zB(z))^{m-1}} (zB'(z))^l \prod_{i = 3}^{m-1}
(\boldsymbol{C}_{i-1})^{d_i}  .
\end{align*}
Using the identity $zB'(z) = \frac{B(z)}{1-2zB(z)}$, which holds for plane binary trees, yields the desired result. 
\end{proof}

\begin{ncor}
Let $S$ be a rooted tree. The generating function of the sequence $g_{\mathcal{B}_n}(S)$, which
counts the number of good embeddings of $S$ into all trees of the family $\mathcal{B}_n$ is given by
$$
{G_S}(z) = (1-2zB(z)){A_S}(z).
$$
\end{ncor}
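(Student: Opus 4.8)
The plan is to exploit the combinatorial interpretation of $A_S(z)$ established in the proof of the theorem, namely that $A_S(z)$ enumerates pairs $(T,E)$, where $T\in\mathcal B$ and $E$ is an embedding of $S$ into $T$. Every such pair singles out a distinguished node $v=v(E)$ of $T$, the image of the root $\mathds{1}_S$ of $S$ under $E$. By definition a good embedding is one for which $v=\mathds{1}_T$, so the task reduces to isolating, inside the generating function $A_S(z)$, the contribution coming from the position of $v$ in $T$.

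First I would decompose $T$ along the path joining its root $\mathds{1}_T$ to $v$. Since $E$ respects the ancestor order (every element of $S$ other than $\mathds{1}_S$ lies below the root and must map below $v$), the entire image of $S$ lies in the subtree $T_v$ of $T$ rooted at $v$, and the restriction $E|_{T_v}$ is precisely a \emph{good} embedding of $S$ into $T_v$. The proper ancestors of $v$, on the other hand, form a path in which each node is a binary vertex having one child on the path toward $v$ and one pendant binary tree on the other side; reading this path from $\mathds{1}_T$ down to, but not including, $v$, each of its $k\ge 0$ nodes contributes a factor $z$ (the node), a factor $2$ (left or right), and a factor $B(z)$ (the pendant subtree). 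Summing the resulting geometric series over the length $k$ yields exactly $\tfrac{1}{1-2zB(z)}$ for this overhead part, while $v$ together with all of $T_v$ is accounted for by $G_S(z)$.

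This decomposition is a bijection between the pairs counted by $A_S(z)$ and pairs consisting of such an ancestor-path and a good embedding into $T_v$; it therefore factorizes the generating function as $A_S(z)=\tfrac{1}{1-2zB(z)}\,G_S(z)$, which is equivalent to the claimed identity $G_S(z)=(1-2zB(z))A_S(z)$. As a sanity check in the base case $S=\bullet$, one has $A_\bullet(z)=zB'(z)=\tfrac{B(z)}{1-2zB(z)}$, while a good embedding of a single node merely marks $\mathds{1}_T$, so $G_\bullet(z)=B(z)$, in agreement with the formula.

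The only genuine point requiring care, and the main (modest) obstacle, is to verify that the split of each pair $(T,E)$ into an ancestor-path and the pair $(T_v,E|_{T_v})$ really is a bijection, and in particular that the distinguished node $v$ and its subtree $T_v$ are counted by the good-embedding series $G_S(z)$ rather than being absorbed into the path factor. One must also confirm that the argument is insensitive to the internal shape of $S$, including vertices of out-degree $\ge 3$ and the auxiliary vertices introduced in the proof of the theorem; this holds because the decomposition refers only to the image of $\mathds{1}_S$ and to the ancestor structure above it, not to how the remainder of $S$ is embedded. Alternatively, the same identity can be obtained by revisiting the recursive construction and omitting, in its very first step, the single factor $\tfrac{1}{1-2zB(z)}$ that encodes the path above the embedded root: checking the three cases $S=\bullet$, $S=(\bullet,\tilde S)$ and $S=(\bullet,S_L,S_R)$ of \eqref{equ:GF_recusrive_A_S_planebinary} shows that this omission amounts precisely to multiplication by $(1-2zB(z))$.
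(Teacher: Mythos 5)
Your proof is correct and takes essentially the same approach as the paper: the paper's (one-line) argument is precisely that a good embedding differs from an arbitrary one only by the absence of the path of left-or-right trees above the embedded root, which accounts for the factor $\frac{1}{1-2zB(z)}$, i.e.\ multiplication by $(1-2zB(z))$. Your bijective decomposition of each pair $(T,E)$ along the ancestor path of the image of $\mathds{1}_S$ simply makes this ``follows immediately'' step explicit, and your closing alternative (omitting the initial path factor in the recursion) is verbatim the paper's proof.
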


\begin{proof}
The corollary follows immediately, as the only difference in the case of good embeddings is that
the root of $S$ is always embedded in the root of the underlying tree. Thus, we have to omit the path of left-or-right trees in the beginning. This corresponds to a multiplication by the factor $(1-2zB(z))$.
\end{proof}

\section{Asymptotics of the number of embeddings in $\mathcal{B}_n$}  \label{sec_asymptotics}
In this section we investigate the asymptotics of $a_{\mathcal{B}_n}(S)$ and $g_{\mathcal{B}_n}(S)$, as well as monotonicity of their ratio when $S$ is a rooted tree. As a tool we use singularity analysis which provides a relation between the behaviour of a generating function near its dominant singularities (\textit{i.e.} its singularities on the circle of convergence) and the asymptotics of its coefficients. The following lemma will be helpful later on.

\begin{lemma}[Compare Theorems VI.4 and VI.5 in \cite{Flajolet_book}]
\label{l_coeff_asym}
Define 
\begin{align*}
\Delta_0 = \{ z \in \mathbb{C} | |z|<\rho+\epsilon, z\neq \rho, |\rm{arg}(z-\rho)|>\nu \}
\end{align*}
for some $\rho >0, \epsilon>0, 0<\nu<\frac{\pi}{2}$.
Let $r \geq 0$, $\rho_j=\rho e^{i \phi_j}$, for $j = 0,1,\ldots,r$ with $\phi_0=0$ and $\phi_1,\ldots,\phi_r \in (0,2\pi)$. 
Consider $T(z)= \sum_{n \geq 0} T_n z^n$ to be an analytic function in $\Delta := \bigcap_{j=0}^r e^{i \phi_j} \Delta_0$ and satisfying for each $j=0,\ldots,r$
\begin{align*}
T(z) \sim K_j\left( 1- \frac{z}{\rho_j} \right)^{-\alpha_j}, \qquad \text{as} \ z \to \rho_j ~in~ \Delta,
\end{align*}
where $\alpha_j \notin \{ 0,-1,-2,\ldots\}$ and the $K_j$ are constants. Then
\begin{align*}
[z^n]T(z) \sim \sum_{j=0}^{r} K_j \frac{n^{\alpha_j-1}}{\Gamma(\alpha_j)} \rho_j^{-n}, \qquad \text{as} \ n \to \infty.
\end{align*}
\end{lemma}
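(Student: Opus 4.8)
The plan is to establish this as the standard singularity-analysis transfer theorem, handling first a single dominant singularity and then localizing at each $\rho_j$ in turn. I would begin with the case $r=0$, $\rho_0=\rho$, which is exactly Theorem VI.4 of \cite{Flajolet_book}. The proof rests on two ingredients. The first is the exact coefficient asymptotics of the standard scale function: by the generalized binomial theorem $[z^n](1-z)^{-\alpha}=\binom{n+\alpha-1}{n}$, and expanding the resulting ratio of Gamma functions (equivalently, applying Stirling's formula) gives $[z^n](1-z)^{-\alpha}\sim n^{\alpha-1}/\Gamma(\alpha)$ whenever $\alpha\notin\{0,-1,-2,\dots\}$, which is precisely the excluded set in the hypothesis. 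A rescaling $z\mapsto z/\rho$ turns this into $[z^n](1-z/\rho)^{-\alpha}\sim\rho^{-n}n^{\alpha-1}/\Gamma(\alpha)$.

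The second ingredient is the transfer of an $O$-bound: if a function analytic in $\Delta_0$ satisfies $T(z)=\mathcal{O}\left((1-z/\rho)^{-\beta}\right)$ as $z\to\rho$, then $[z^n]T(z)=\mathcal{O}\left(\rho^{-n}n^{\beta-1}\right)$. This follows from Cauchy's coefficient formula $[z^n]T(z)=\frac{1}{2\pi i}\oint T(z)\,z^{-n-1}\,\mathrm{d}z$ by deforming the contour into a Hankel-type path that hugs $\rho$: a small circle of radius of order $1/n$ around $\rho$, two rectilinear segments leaving $\rho$ at angle exceeding $\nu$ (so that they remain inside $\Delta_0$, using $0<\nu<\pi/2$), and an outer arc of radius $\rho+\epsilon'$ with $\epsilon'<\epsilon$. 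The outer arc contributes $\mathcal{O}\left((\rho+\epsilon')^{-n}\right)$, which is exponentially smaller than $\rho^{-n}$; on the inner circle and the two segments the growth bound on $T$ together with the factor $z^{-n-1}$, after the substitution $z=\rho(1+t/n)$, yields an integral of the claimed order $\mathcal{O}\left(\rho^{-n}n^{\beta-1}\right)$. With both pieces available, writing $T(z)=K(1-z/\rho)^{-\alpha}+R(z)$, where the hypothesis $T(z)\sim K(1-z/\rho)^{-\alpha}$ forces $R(z)=o\left((1-z/\rho)^{-\alpha}\right)$, the first ingredient handles the main term exactly and the second transfers the remainder to $o\left(\rho^{-n}n^{\alpha-1}\right)$, settling the single-singularity case.

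To pass to several singularities (Theorem VI.5 of \cite{Flajolet_book}), I would use that $T$ is analytic in $\Delta=\bigcap_{j=0}^{r}e^{i\phi_j}\Delta_0$, so the Cauchy contour can be deformed into a composite path made of $r+1$ Hankel-type indentations, one localizing each $\rho_j$, joined by connecting arcs that stay strictly inside the disc of radius $\rho$ and away from every singularity. On each connecting arc the integrand is $\mathcal{O}\left((\rho-\delta)^{-n}\right)$ for some $\delta>0$, hence exponentially negligible. Near $\rho_j$ the rotation $z\mapsto e^{-i\phi_j}z$ sends $\rho_j$ to $\rho$ and reduces the local integral to the single-singularity analysis already carried out, producing the contribution $K_j\,\rho_j^{-n}\,n^{\alpha_j-1}/\Gamma(\alpha_j)$, with the phase carried by $\rho_j^{-n}=\rho^{-n}e^{-in\phi_j}$. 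Summing over $j$ gives the stated expansion.

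The main obstacle I anticipate is the bookkeeping of error terms across singularities with differing exponents $\alpha_j$. Since the hypothesis provides only an equivalence $T(z)\sim K_j(1-z/\rho_j)^{-\alpha_j}$ at each $\rho_j$, the local remainder must be shown to be $o\left((1-z/\rho_j)^{-\alpha_j}\right)$ there and then transferred to $o\left(\rho^{-n}n^{\alpha_j-1}\right)$ \emph{separately} at each singularity, so that every main term $K_j\rho_j^{-n}n^{\alpha_j-1}/\Gamma(\alpha_j)$ is matched by its own genuinely smaller error rather than by a uniform bound governed by the largest $\alpha_j$. Care is also required to ensure that the Hankel indentations at distinct $\rho_j$ together with the connecting arcs assemble into a single admissible contour inside $\Delta$; this is exactly what the intersection structure of $\Delta$ and the opening-angle condition $|\mathrm{arg}(z-\rho)|>\nu$ guarantee.
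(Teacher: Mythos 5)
The paper gives no proof of this lemma at all: it simply invokes Theorems VI.4 and VI.5 of Flajolet--Sedgewick, and your proposal reconstructs exactly that standard singularity-analysis argument (scale-function asymptotics plus Hankel-contour transfer, then a composite contour for several dominant singularities), so in outline you are on the same route as the paper's citation. However, there is a concrete error in your multiple-singularity step. You join the Hankel-type indentations by connecting arcs that ``stay strictly inside the disc of radius $\rho$'' and claim their contribution is $\mathcal{O}\left((\rho-\delta)^{-n}\right)$, ``hence exponentially negligible''. This is backwards: since $\rho-\delta<\rho$, one has $(\rho-\delta)^{-n}=\rho^{-n}(1-\delta/\rho)^{-n}$, which is exponentially \emph{larger} than $\rho^{-n}$, so this upper bound says nothing and the step fails as written --- a fixed arc inside the circle of convergence cannot be discarded. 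The correct construction (the one actually used for Theorem VI.5 in the reference, and which you yourself use correctly in the single-singularity case) places the connecting arcs on a circle of radius $\rho+\epsilon'$ with $0<\epsilon'<\epsilon$; this is legitimate because $\Delta=\bigcap_{j} e^{i\phi_j}\Delta_0$ extends beyond the circle of convergence everywhere except near the points $\rho_j$, and on such arcs the integrand is $\mathcal{O}\left((\rho+\epsilon')^{-n}\right)$, genuinely exponentially smaller than $\rho^{-n}$.

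With that correction, the remainder of your argument is sound and matches the cited proof: the binomial/Stirling asymptotics $[z^n](1-z/\rho)^{-\alpha}\sim \rho^{-n}n^{\alpha-1}/\Gamma(\alpha)$ for $\alpha\notin\{0,-1,-2,\dots\}$, the $o$-transfer of each local remainder $T(z)-K_j(1-z/\rho_j)^{-\alpha_j}$ along its own rescaled Hankel notch, and the separate error bookkeeping at each $\rho_j$ so that each term $K_j\rho_j^{-n}n^{\alpha_j-1}/\Gamma(\alpha_j)$ carries its own genuinely smaller error.
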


\bnrem
Note that the assumptions of Lemma~\ref{l_coeff_asym} imply that $\{\rho_0,\rho_1,\dots,\rho_r\}$
is exactly the set of all singularities of the power series $\sum_{n \geq 0} T_n z^n$ on its
circle of convergence. 
\enrem

\begin{thm}
\label{thm_asymptotics}
Consider a rooted tree $S$ with degree distribution sequence $d_S=(l,u,d_2, \dots, d_{m-1})$. Let
$C = \prod_{i=3}^{m-1}(\boldsymbol{C}_{i-1})^{d_i}$. The asymptotics of the number of all
embeddings of $S$ into $\mathcal{B}_n$ is given by
$$
a_{\mathcal{B}_n}(S) \sim \frac{C \cdot 2^{\frac{5-m-3l}{2}}}{\Gamma(\frac{m+l-1}{2})} \cdot 2^n \cdot n^{\frac{m+l-3}{2}}
$$
for $n$ being odd and $a_{\mathcal{B}_n}(S) = 0$ for $n$ being even.
The asymptotics of the number of good embeddings of $S$ into $\mathcal{B}_n$ is given by
$$
g_{\mathcal{B}_n}(S) \sim \left\{  \begin{array}{ll} 
\frac{C \cdot 2^{\frac{6-m-3l}{2}}}{\Gamma(\frac{m+l-2}{2})} \cdot 2^n \cdot n^{\frac{m+l-4}{2}} & \textrm {if} \hspace{10pt} m+l-2 > 0\\
\frac{\sqrt{2} \cdot 2^n}{\sqrt{\pi n^3}} & \textrm{if} \hspace{10pt} m+l-2 = 0
\end{array}\right.
$$
for $n$ being odd and $g_{\mathcal{B}_n}(S) = 0$ for $n$ being even. 
\end{thm}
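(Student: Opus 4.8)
The plan is to apply the singularity-analysis transfer theorem, Lemma~\ref{l_coeff_asym}, to the generating functions $A_S(z)$ and $G_S(z)$ from the previous section. The crucial preliminary simplification is the identity $1-2zB(z)=\sqrt{1-4z^2}$, which follows at once from the explicit formula for $B(z)$. Substituting it into the expression for $A_S(z)$ from Theorem~1 recasts it as
$$A_S(z) = C\,2^u\,z^{l+u-1}\,B(z)^{l+u}\,(1-4z^2)^{-(m+l-1)/2},$$
and the Corollary then gives $G_S(z)=\sqrt{1-4z^2}\,A_S(z)=C\,2^u\,z^{l+u-1}\,B(z)^{l+u}\,(1-4z^2)^{-(m+l-2)/2}$. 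Since $l\ge 1$, the factor $z^{l+u-1}$ is a nonnegative power of $z$ and $B(z)^{l+u}$ vanishes at the origin, so both functions are analytic at $0$; their only singularities on the circle of convergence are the two branch points $z=\pm\tfrac12$ coming from $\sqrt{1-4z^2}$. Thus we are exactly in the setting of Lemma~\ref{l_coeff_asym} with $r=1$, $\rho=\tfrac12$, $\phi_0=0$, $\phi_1=\pi$, i.e.\ $\rho_0=\tfrac12$ and $\rho_1=-\tfrac12$, once one checks $\Delta$-analyticity, which follows from the analyticity of $B(z)$ away from its branch cuts.

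First I would compute the local singular behaviour near each $\rho_j$. Near $z=\tfrac12$ one writes $1-4z^2=(1-2z)(1+2z)\sim 2(1-2z)$, uses $B(\tfrac12)=1$ and $z^{l+u-1}\to 2^{-(l+u-1)}$, and reads off $A_S(z)\sim K_0(1-2z)^{-\alpha}$ with $\alpha=(m+l-1)/2$ and $K_0=C\,2^{(3-3l-m)/2}$. Near $z=-\tfrac12$ the analogous computation uses $B(-\tfrac12)=-1$ together with the sign $(-1)^{l+u-1}$ coming from $z^{l+u-1}$; these signs combine to give $K_1=-K_0$ with the same exponent $\alpha$. Feeding $K_0,K_1,\rho_0,\rho_1$ into Lemma~\ref{l_coeff_asym} yields
$$[z^n]A_S(z)\sim \frac{K_0\,n^{\alpha-1}}{\Gamma(\alpha)}\bigl(2^n-(-2)^n\bigr)=\frac{K_0\,n^{\alpha-1}}{\Gamma(\alpha)}\,2^n\bigl(1-(-1)^n\bigr),$$
which vanishes for even $n$ and equals $2K_0\,n^{\alpha-1}2^n/\Gamma(\alpha)$ for odd $n$; simplifying the powers of $2$ gives precisely the claimed asymptotics for $a_{\mathcal{B}_n}(S)$. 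For the good embeddings the extra factor $\sqrt{1-4z^2}$ lowers the exponent to $\alpha=(m+l-2)/2$ and multiplies both local constants by $\sqrt2$ (being even in $z$, it leaves $K_1=-K_0$ intact), and the identical calculation produces the stated formula for $g_{\mathcal{B}_n}(S)$ in the regime $m+l-2>0$.

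The one genuinely separate point is the boundary case $m+l-2=0$, where $\alpha=0$ is excluded by the hypotheses of Lemma~\ref{l_coeff_asym}. Because $1\le l\le m$ and $m+l=2$, this forces $m=l=1$, i.e.\ $S$ is the single node; then $u=0$, $C=1$, and the formula for $G_S(z)$ collapses to $G_S(z)=B(z)$, reflecting that a single node has exactly one good embedding into each plane binary tree. Here I would simply invoke the classical Catalan asymptotics $\boldsymbol{C}_k\sim 4^k/\sqrt{\pi k^3}$ with $k=(n-1)/2$ to obtain $g_{\mathcal{B}_n}(S)=[z^n]B(z)=\boldsymbol{C}_{(n-1)/2}\sim \sqrt2\,2^n/\sqrt{\pi n^3}$ for odd $n$. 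The main obstacle is the bookkeeping at the second singularity $z=-\tfrac12$: one must track the signs from $B(-\tfrac12)=-1$ and from the odd power $z^{l+u-1}$ correctly, since it is exactly their cancellation into $K_1=-K_0$ that produces the parity factor $1-(-1)^n$ and hence the vanishing of the coefficients for even $n$.
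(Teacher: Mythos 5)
Your proposal is correct and follows essentially the same route as the paper's proof: rewrite $A_S(z)$ and $G_S(z)$ via $1-2zB(z)=\sqrt{1-4z^2}$, extract the Puiseux expansions at the two dominant singularities $z=\pm\tfrac12$ (with the sign cancellation giving $K_1=-K_0$ and hence the parity factor), apply Lemma~\ref{l_coeff_asym}, and treat $m+l-2=0$ separately by recognizing $g_{\mathcal{B}_n}(\bullet)=\boldsymbol{C}_{(n-1)/2}$. Your write-up is in fact slightly more explicit than the paper about where the local constants and the sign at $z=-\tfrac12$ come from, but the method and all constants agree.
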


\begin{proof}
Recall that $a_{\mathcal{B}_n}(S) = [z^n] {A_{S}}(z)$. The function ${A_{S}}(z)$ has two dominant singularities at $\rho_0 = 1/2$ and $\rho_1 = -1/2$.
Expanding ${A_{S}}(z)$ into its Puiseux series at $z \rightarrow \rho_0 = 1/2$ gives
$$
{A_{S}}(z) = C \cdot 2^{\frac{3-m-3l}{2}} \cdot \left(1-\frac{z}{\rho_0}\right)^{-\frac{m+l-1}{2}} \left( 1 + O \left( \left(1-\frac{z}{\rho_0}\right)^{1/2}\right) \right) .
$$
Note that $m+l-1 \geq 1$, since always $l \geq 1$ and $m \geq 1$. Expanding ${A_{S}}(z)$ into a
Puiseux series at $z \rightarrow \rho_1= -1/2$ gives
$$
{A_{S}}(z) = - C \cdot 2^{\frac{3-m-3l}{2}} \cdot \left(1-\frac{z}{\rho_1}\right)^{-\frac{m+l-1}{2}} \left(1 + O \left( \left(1-\frac{z}{\rho_1}\right)^{1/2}\right) \right).
$$
By Lemma \ref{l_coeff_asym} we get
\[
\begin{split}
[z^n] {A_{S}}(z) & \sim \frac{C \cdot 2^{\frac{3-m-3l}{2}}}{\Gamma(\frac{m+l-1}{2})} \cdot (\rho_0)^{-n} \cdot n^{\frac{m+l-3}{2}} - \frac{C \cdot 2^{\frac{3-m-3l}{2}}}{\Gamma(\frac{m+l-1}{2})} \cdot (\rho_1)^{-n} \cdot n^{\frac{m+l-3}{2}} \\
& = \left\{  \begin{array}{ll}
\frac{C \cdot 2^{\frac{5-m-3l}{2}}}{\Gamma(\frac{m+l-1}{2})} \cdot 2^{n} \cdot n^{\frac{m+l-3}{2}} & \textrm {if} \hspace{10pt} $n$ \hspace{5pt} \textrm{is~odd},\\
0 & \textrm{if} \hspace{10pt} $n$ \hspace{5pt} \textrm{is~even}.
\end{array}\right.
\end{split}
\]
The asymptotic analysis for the number of good embeddings is analogous. Again, $g_{\mathcal{B}_n}(S) = [z^n] {G_{S}}(z)$ and ${G_{S}}(z)$ has two dominant singularities at $1/2$ and $-1/2$. For $m+l-2>0$ we obtain
$$
[z^n] {G_{S}}(z) \sim \left\{  \begin{array}{ll}
\frac{C \cdot 2^{\frac{6-m-3l}{2}}}{\Gamma(\frac{m+l-2}{2})} \cdot 2^{n} \cdot n^{\frac{m+l-4}{2}} & \textrm {if} \hspace{10pt} $n$ \hspace{5pt} \textrm{is~odd},\\
0 & \textrm{if} \hspace{10pt} $n$ \hspace{5pt} \textrm{is~even}.
\end{array}\right.
$$
The case $m+l-2=0$ needs to be treated separately. Note that then $m=1$ and $l=1$, thus the structure $S$ that we embed is a single vertex. Therefore the number of good embeddings is just the cardinality of $\mathcal{B}_n$, \textit{i.e.} $g_{\mathcal{B}_n}(S)=\boldsymbol{C}_{\frac{n-1}{2}} \sim \frac{\sqrt{2} \cdot 2^n}{\sqrt{\pi n^3}}$. (Note also that for $S$ being a single vertex $a_{\mathcal{B}_n}(S) = n \boldsymbol{C}_{\frac{n-1}{2}} \sim \frac{\sqrt{2} \cdot 2^n}{\sqrt{\pi n}}$.)
\end{proof}

\begin{ncor}
\label{cor_ratio}
Consider a rooted tree $S$ with degree distribution sequence $d_S=(l,u,d_2, \dots, d_{m-1})$. Let $k = \frac{m+l-2}{2}$ and let $n$ be odd. The asymptotic ratio of the number of good embeddings of $S$ into $\mathcal{B}_n$ to the number of all embeddings into  $\mathcal{B}_n$ is given by
$$
\frac{g_{\mathcal{B}_n}(S)}{a_{\mathcal{B}_n}(S)} \sim \left\{  \begin{array}{ll}
\frac{\Gamma(k+1/2)}{\Gamma(k)} \frac{\sqrt{2}}{\sqrt{n}} & \textrm {if} \hspace{10pt} k > 0,\\
1/n & \textrm {if} \hspace{10pt} k = 0.
\end{array}\right.
$$
\end{ncor}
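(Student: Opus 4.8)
The plan is to obtain the ratio directly by dividing the two asymptotic equivalents supplied by Theorem~\ref{thm_asymptotics}, using the elementary fact that $f(n)\sim g(n)$ and $h(n)\sim j(n)$ (with $h,j$ eventually nonzero) imply $f(n)/h(n)\sim g(n)/j(n)$. Since $n$ is assumed odd, both numerator and denominator are strictly positive for all large $n$, so this step is legitimate; no new analysis is required beyond that already carried out for the theorem.

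First I would introduce the substitution $k=\frac{m+l-2}{2}$, under which the relevant exponents become
\begin{align*}
\frac{m+l-1}{2}=k+\tfrac12,\qquad \frac{m+l-2}{2}=k,\qquad \frac{m+l-3}{2}=k-\tfrac12,\qquad \frac{m+l-4}{2}=k-1.
\end{align*}
I would also record that the leading powers of $2$, namely $2^{(6-m-3l)/2}$ in $g_{\mathcal{B}_n}(S)$ and $2^{(5-m-3l)/2}$ in $a_{\mathcal{B}_n}(S)$, differ precisely by the factor $2^{1/2}$.

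For the case $k>0$ (equivalently $m+l-2>0$) I would substitute both formulas of Theorem~\ref{thm_asymptotics} into the quotient. The common factors $C$ and $2^n$ cancel outright, the powers of $2$ reduce to a single factor $\sqrt2$, the gamma functions combine into $\Gamma(k+\tfrac12)/\Gamma(k)$, and the powers of $n$ combine into $n^{(k-1)-(k-1/2)}=n^{-1/2}$. Collecting these factors yields exactly $\frac{\Gamma(k+1/2)}{\Gamma(k)}\frac{\sqrt2}{\sqrt n}$, which is the first branch of the claim.

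The case $k=0$ forces $m=l=1$, so $S$ is a single vertex; here $\Gamma(k)$ is undefined and the first branch does not apply. For this boundary case I would instead invoke the explicit single-vertex asymptotics recorded inside the proof of Theorem~\ref{thm_asymptotics}, namely $g_{\mathcal{B}_n}(S)\sim\sqrt2\cdot 2^n/\sqrt{\pi n^3}$ and $a_{\mathcal{B}_n}(S)\sim\sqrt2\cdot 2^n/\sqrt{\pi n}$, whose ratio simplifies to $\sqrt{n}/\sqrt{n^3}=1/n$, giving the second branch. I do not anticipate any genuine obstacle here: the argument is pure bookkeeping of exponents and gamma arguments, and the only point deserving care is that the value $k=0$ must be excluded from the $\Gamma(k)$ formula and treated separately by the single-vertex computation.
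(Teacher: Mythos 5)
Your proposal is correct and matches the paper's approach: the paper's own proof is a one-line remark that the corollary "follows immediately from Theorem~\ref{thm_asymptotics}", and your write-up simply carries out that bookkeeping explicitly (cancelling $C$ and $2^n$, combining the powers of $2$ and $n$, and the gamma factors). Your separate treatment of $k=0$ via the single-vertex asymptotics $g_{\mathcal{B}_n}(S)=\boldsymbol{C}_{\frac{n-1}{2}}$ and $a_{\mathcal{B}_n}(S)=n\boldsymbol{C}_{\frac{n-1}{2}}$ is exactly what the paper's Theorem~\ref{thm_asymptotics} proof records for that boundary case.
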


\begin{proof}
The corollary follows immediately from Theorem \ref{thm_asymptotics}.
\end{proof}

Kubicki~\emph{et al.}~\cite{KLM_ratio_incr} proved that if $T$ is a complete balanced binary tree of arbitrary size and $S_1$, $S_2$ are rooted trees in which each node has at most $2$ descendants (\textit{i.e.}  $S_1$ and $S_2$ are Motzkin trees) and $S_1 \subseteq S_2$, then $\frac{g_{T}(S_1)}{a_{T}(S_1)} \leq \frac{g_{T}(S_2)}{a_{T}(S_2)}$. They also conjectured that the ratio $\frac{g_{T}(S)}{a_{T}(S)}$ is weakly increasing with $S$ for $S$ being any rooted tree. One year later in \cite{KLM_AtoB_is_2tol} they also stated an asymptotic result for the ratio $\frac{g_{T}(S)}{a_{T}(S)}$ when $S$ is an arbitrary rooted tree and $T$ a complete binary tree of size $n$. They showed that $\lim_{n \rightarrow \infty} \frac{g_{T}(S)}{a_{T}(S)} = 2^{l-1}-1$ where $l$ is the number of leaves in $S$. Thereby they proved that for any rooted tree $S$ the asymptotic ratio $\frac{g_{T}(S)}{a_{T}(S)}$ is non-decreasing with $S$ (the function $2^{l-1}-1$ increases with $l$ and if $S_1 \subseteq S_2$ then the number of leaves of $S_2$ equals at least the number of leaves of $S_1$).

The conjecture from \cite{KLM_ratio_incr} was disproved by Georgiou~\cite{Georgiou} who chose specific ternary trees as embedded structures to construct a counterexample. He also generalized the underlying structure to a complete $k$-ary tree and considered strict-order preserving maps instead of embeddings. In this setting he proved that a correlation inequality (corresponding to $\frac{g_{\mathcal{T}_n}(S_1)}{a_{\mathcal{T}_n}(S_1)} \leq \frac{g_{\mathcal{T}_n}(S_2)}{a_{\mathcal{T}_n}(S_2)}$) already holds for $S_1$, $S_2$ being arbitrary rooted trees such that $S_1 \subseteq S_2$.

Referring to the asymptotic result from \cite{KLM_AtoB_is_2tol}, we show below and in the subsequent sections that in our case the asymptotic ratios $\frac{\sqrt{n}~g_{\mathcal{B}_n}(S)}{a_{\mathcal{B}_n}(S)}$, $\frac{\sqrt{n}~g_{\mathcal{T}_n}(S)}{a_{\mathcal{T}_n}(S)}$ and $\frac{\sqrt{n}~g_{\mathcal{V}_n}(S)}{a_{\mathcal{V}_n}(S)}$ are all weakly increasing with $S$ for $S$ being an arbitrary rooted tree. Using this asymptotic result we show later that also the ratios $\frac{g_{\mathcal{B}_n}(S)}{a_{\mathcal{B}_n}(S)}$, $\frac{g_{\mathcal{T}_n}(S)}{a_{\mathcal{T}_n}(S)}$ and $\frac{g_{\mathcal{V}_n}(S)}{a_{\mathcal{V}_n}(S)}$ (unlike in the case from \cite{KLM_ratio_incr}) are eventually weakly increasing with $S$ for sufficiently large $n$. In order to do so, we use Gautschi's inequality given in the following lemma.

\begin{lemma}[Gautschi's inequality, \cite{Gautschi_ineq}]
\label{l_Gautschi}
Let $x$ be a positive real number and let $s \in (0,1)$. Then
$$
x^{1-s} < \frac{\Gamma(x+1)}{\Gamma(x+s)} < (x+1)^{1-s}.
$$
\end{lemma}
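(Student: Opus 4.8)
The plan is to derive both inequalities from a single structural property of the Gamma function, namely that $\log\Gamma$ is convex on $(0,\infty)$, together with the functional equation $\Gamma(x+1)=x\Gamma(x)$. Log-convexity is the Bohr--Mollerup characterization of $\Gamma$; if one prefers a self-contained argument, it follows in one line from H\"older's inequality applied to the integral representation $\Gamma(a)=\int_0^\infty t^{a-1}e^{-t}\,\mathrm{d}t$, which gives $\Gamma(\lambda a+(1-\lambda)b)\le \Gamma(a)^{\lambda}\Gamma(b)^{1-\lambda}$ for $\lambda\in(0,1)$. The whole proof then reduces to writing the arguments $x+s$ and $x+1$ as suitable convex combinations of neighbouring integer-shifted points and applying convexity.

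For the lower bound I would use the identity $x+s=(1-s)\,x+s\,(x+1)$, which holds since $s\in(0,1)$ and exhibits $x+s$ as a convex combination of $x$ and $x+1$. Convexity of $\log\Gamma$ yields $\Gamma(x+s)\le \Gamma(x)^{1-s}\,\Gamma(x+1)^{s}$. Dividing $\Gamma(x+1)$ by this estimate and using $\Gamma(x+1)=x\Gamma(x)$ gives $\frac{\Gamma(x+1)}{\Gamma(x+s)}\ge \bigl(\Gamma(x+1)/\Gamma(x)\bigr)^{1-s}=x^{1-s}$, which is the required left-hand inequality.

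For the upper bound I would instead express $x+1$ as a convex combination of $x+s$ and $x+1+s$: solving $x+1=s\,(x+s)+(1-s)\,(x+1+s)$ is immediate. Convexity now gives $\Gamma(x+1)\le \Gamma(x+s)^{s}\,\Gamma(x+1+s)^{1-s}$, and dividing by $\Gamma(x+s)$ together with $\Gamma(x+1+s)=(x+s)\Gamma(x+s)$ yields $\frac{\Gamma(x+1)}{\Gamma(x+s)}\le \bigl(\Gamma(x+1+s)/\Gamma(x+s)\bigr)^{1-s}=(x+s)^{1-s}$. Since $s<1$ we have $1-s>0$ and $x+s<x+1$, so $(x+s)^{1-s}<(x+1)^{1-s}$; this proves the stated right-hand inequality, in fact a slightly sharper one.

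There is no serious analytic obstacle here: once log-convexity is granted, both bounds are two-line computations, and the only genuine content is the choice of the two convex combinations. The mild subtlety is that the natural upper bound one obtains is $(x+s)^{1-s}$ rather than $(x+1)^{1-s}$---one must remember to weaken it to the form stated in the lemma. If one insists on the strict inequalities as written, the remaining point is to verify that $\log\Gamma$ is \emph{strictly} convex, so that the convexity estimates are strict whenever the three interpolation points are distinct; this follows from the equality case of H\"older's inequality, which is excluded because $t^{a-1}$ and $t^{b-1}$ are not proportional for $a\neq b$.
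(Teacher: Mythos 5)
The paper offers no proof of this lemma at all: it is imported verbatim from the cited reference and used as a black box in the proofs of Theorems \ref{thm_asm_ratio} and \ref{thm_ratio}. So your argument is not competing with an in-paper proof; it supplies one where the authors rely on a citation. Your proof is correct. Both convex combinations check out: $(1-s)x+s(x+1)=x+s$ gives $\Gamma(x+s)\le\Gamma(x)^{1-s}\Gamma(x+1)^{s}$, hence $\Gamma(x+1)/\Gamma(x+s)\ge\bigl(\Gamma(x+1)/\Gamma(x)\bigr)^{1-s}=x^{1-s}$; and $s(x+s)+(1-s)(x+1+s)=x+1$ gives $\Gamma(x+1)/\Gamma(x+s)\le\bigl(\Gamma(x+1+s)/\Gamma(x+s)\bigr)^{1-s}=(x+s)^{1-s}$, which you then correctly weaken to $(x+1)^{1-s}$. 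Two remarks on the fine points you flagged yourself. First, for the upper bound strictness is free, since $(x+s)^{1-s}<(x+1)^{1-s}$ is already strict, so there you do not even need strict log-convexity. Second, for the lower bound you do need strictness, and your appeal to the equality case of H\"older is the right mechanism: equality would force $t^{x-1}e^{-t}$ and $t^{x}e^{-t}$ to be proportional almost everywhere, which is impossible, so $\log\Gamma$ is strictly convex and the inequality $\Gamma(x+s)<\Gamma(x)^{1-s}\Gamma(x+1)^{s}$ is strict for $s\in(0,1)$. As a bonus, your intermediate bound $(x+s)^{1-s}$ is a genuinely sharper upper estimate than the one stated in the lemma, and is a known refinement in the literature on Gautschi-type inequalities; for the paper's purposes (monotonicity of $\Gamma(k+1/2)/\Gamma(k)$) either form suffices.
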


\begin{thm}
\label{thm_asm_ratio}
Let $S_1$, $S_2$ be rooted trees such that $S_1 \subseteq S_2$. Then
$$
\lim_{n \rightarrow \infty} \sqrt{n} ~ \frac{g_{\mathcal{B}_n}(S_1)}{a_{\mathcal{B}_n}(S_1)} \leq \lim_{n \rightarrow \infty} \sqrt{n} ~ \frac{g_{\mathcal{B}_n}(S_2)}{a_{\mathcal{B}_n}(S_2)}.
$$
\end{thm}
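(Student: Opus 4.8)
The plan is to reduce the statement to two independent facts: an analytic monotonicity of the limiting constant as a function of a single parameter, and a purely combinatorial inequality between the degree data of $S_1$ and $S_2$. By Corollary~\ref{cor_ratio}, writing $k_i = (m_i + l_i - 2)/2$ for the parameter attached to $S_i$ (here $m_i$ is the size and $l_i$ the number of leaves), the two limits in question — understood along odd $n$, since the even-indexed terms vanish — equal
\[
L(S_i) := \lim_{n \to \infty} \sqrt{n}\, \frac{g_{\mathcal{B}_n}(S_i)}{a_{\mathcal{B}_n}(S_i)} = \begin{cases} \sqrt{2}\, \dfrac{\Gamma(k_i + 1/2)}{\Gamma(k_i)}, & k_i > 0,\\[2mm] 0, & k_i = 0.\end{cases}
\]
Thus $L$ depends on $S_i$ only through $k_i$, and it suffices to prove: (a) the map $k \mapsto \Gamma(k+1/2)/\Gamma(k)$ is non-decreasing on $(0,\infty)$, with the convention that the value $0$ attached to $k=0$ lies below every positive value; and (b) $S_1 \subseteq S_2 \Rightarrow k_1 \le k_2$, equivalently $m_1 + l_1 \le m_2 + l_2$.

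For part (a) I would invoke the log-convexity of the Gamma function. Setting $\phi(k) = \log\Gamma(k+1/2) - \log\Gamma(k)$, one has $\phi'(k) = \psi(k+1/2) - \psi(k)$, where $\psi = (\log\Gamma)'$ is the digamma function; since $\log\Gamma$ is convex, $\psi$ is increasing and $\phi'(k) > 0$. Hence $\Gamma(k+1/2)/\Gamma(k) = e^{\phi(k)}$ is strictly increasing on $(0,\infty)$. If $k_1 = 0$ then $L(S_1) = 0 \le L(S_2)$ is immediate, and if $0 < k_1 \le k_2$ the monotonicity just established gives $L(S_1) \le L(S_2)$. (Gautschi's inequality, Lemma~\ref{l_Gautschi}, only pins down the growth order $\Gamma(k+1/2)/\Gamma(k) \asymp \sqrt{k}$ and is not by itself enough for this monotonicity, so I avoid it here; it should rather be the tool for the later finite-$n$ statement.)

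The combinatorial inequality (b) is where the real work lies, and it is the step I expect to be the main obstacle. Fix an embedding realizing $S_1 \subseteq S_2$, i.e.\ a vertex set $W \subseteq V(S_2)$ with $|W| = m_1$ whose induced subposet is isomorphic to $S_1$. The leaves of $S_1$ are exactly the minimal elements of $W$; these are pairwise incomparable in $S_2$ (if $w <_{S_2} w'$ with both minimal in $W$, then $w \in W$ lies below $w'$, contradicting minimality of $w'$). I would split these $l_1$ minimal elements into those that are already leaves of $S_2$ and those, say $B$ of them, that are internal in $S_2$. Each $w \in B$ is minimal in $W$, so no descendant of $w$ in $S_2$ lies in $W$; being internal, $w$ has at least one proper descendant, and since the elements of $B$ are pairwise incomparable their descendant sets in $S_2$ are disjoint. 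This gives $m_2 - m_1 = |V(S_2)\setminus W| \ge |B|$. On the other hand the remaining $l_1 - |B|$ minimal elements are genuine leaves of $S_2$, whence $l_1 - |B| \le l_2$, i.e.\ $l_1 - l_2 \le |B|$. Combining the two bounds yields $l_1 - l_2 \le m_2 - m_1$, that is $m_1 + l_1 \le m_2 + l_2$, hence $k_1 \le k_2$. Together with (a) this proves the theorem; the crux is precisely that a leaf of $S_1$ which fails to be a leaf of $S_2$ must conceal a whole nonempty subtree of fresh vertices, so the leaf deficit $l_1 - l_2$ is always paid for by the vertex surplus $m_2 - m_1$.
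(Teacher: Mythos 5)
Your proof is correct, and it shares the paper's overall skeleton --- via Corollary~\ref{cor_ratio}, everything reduces to (a) monotonicity of $f(k)=\Gamma(k+1/2)/\Gamma(k)$ and (b) $S_1\subseteq S_2 \Rightarrow m_1+l_1\le m_2+l_2$ --- but you handle both key steps by genuinely different arguments. For (a), the paper applies Gautschi's inequality (Lemma~\ref{l_Gautschi}) twice to bound $f(k+1/2)/f(k)$ on the half-integers, whereas your digamma/log-convexity argument gives strict monotonicity on all of $(0,\infty)$ in two lines. This is cleaner, and it also sidesteps a defect in the paper's write-up: the displayed bound $f(k+1/2)/f(k)>k^{1/2}(k+1/2)^{1/2}$ there cannot be right, since the left-hand side tends to $1$ as $k\to\infty$ while the right-hand side diverges. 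The Gautschi route is nevertheless repairable --- from $\Gamma(k+1)/\Gamma(k+1/2)>k^{1/2}$ and $\Gamma(k)/\Gamma(k+1/2)=\Gamma(k+1)/\bigl(k\,\Gamma(k+1/2)\bigr)>k^{-1/2}$ one gets $f(k+1/2)/f(k)>1$ directly --- so your parenthetical claim that Gautschi is ``not by itself enough'' is too strong, but that remark plays no role in your proof. For (b), the paper proves the two inequalities $m_1\le m_2$ and $l_1\le l_2$ separately, the latter by observing that the number of leaves of a tree poset equals its maximum antichain size; you instead prove only the combined inequality $m_1+l_1\le m_2+l_2$, by charging every leaf of $S_1$ that is embedded at an internal vertex of $S_2$ to a distinct vertex of $S_2$ outside the image of the embedding. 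Your charging argument is sound (incomparable vertices of a tree have disjoint descendant sets, and a minimal element of the image has no image vertex below it), and it delivers exactly what the limit comparison needs; the paper's version buys the slightly stronger separate facts $l_1\le l_2$ and $m_1\le m_2$, which it reuses later in the proof of Theorem~\ref{thm_ratio}.
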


\begin{proof}
Let $d_{S_1} = (l_1, u_1, \ldots)$, $d_{S_2} = (l_2, u_2, \ldots)$, $k_1 = \frac{m_1+l_1-2}{2}$, $k_2 = \frac{m_2+l_2-2}{2}$ (where $m_i$ denotes the size of $S_i$) and $k_1>0$ (the case when $k_1=0$ is trivial). By Corollary \ref{cor_ratio} we have
$$
\lim_{n \rightarrow \infty} \sqrt{n} ~ \frac{g_{\mathcal{B}_n}(S_1)}{a_{\mathcal{B}_n}(S_1)} = \frac{\sqrt{2} \cdot \Gamma(k_1+1/2)}{\Gamma(k_1)} \hspace{10 pt} \textrm{and} \hspace{10 pt} \lim_{n \rightarrow \infty} \sqrt{n} ~ \frac{g_{\mathcal{B}_n}(S_2)}{a_{\mathcal{B}_n}(S_2)} = \frac{\sqrt{2} \cdot \Gamma(k_2+1/2)}{\Gamma(k_2)}.
$$
Note that the values $k_1$, $k_1+1/2$, $k_2$ and $k_2+1/2$ all belong to the set $\{\frac{1}{2}, 1, \frac{3}{2}, 2, \frac{5}{2}, \ldots\}$. First, we are going to show that the function $f(k) = \frac{\Gamma(k+1/2)}{\Gamma(k)}$ is increasing in $k$ for $k \in \{\frac{1}{2}, 1, \frac{3}{2}, 2, \frac{5}{2}, \ldots\}$. Indeed, applying twice Gautschi's inequality (Lemma \ref{l_Gautschi}) we get for $k>1/2$
$$
\frac{f(k+1/2)}{f(k)} = \frac{\Gamma(k+1)}{\Gamma(k+1/2)} \frac{\Gamma(k)}{\Gamma(k+1/2)} > k^{1/2} (k+1/2)^{1/2}.
$$
Thus, for $k>\frac{\sqrt{17}-1}{4} \approx 0.78$, we obtain $\frac{f(k+1/2)}{f(k)} > 1$. For $k=1/2$ we also have $\frac{f(k+1/2)}{f(k)} = \frac{\pi}{2} > 1$.

Now, it suffices to show that whenever $S_1 \subseteq S_2$, then $k_1 \leq k_2$ (equivalently $m_1+l_1 \leq m_2+l_2$). Of course, $m_1 \leq m_2$. Next, observe that if $S_1 \subseteq S_2$, then also $l_1 \leq l_2$. Indeed, the number of leaves in a tree is the cardinality of its largest antichain. If $S_1$ has $l_1$ leaves and $S_1 \subseteq S_2$, then $S_2$ needs to contain an antichain of cardinality $l_1$ as a subposet, which means that its number of leaves has to satisfy $l_2 \geq l_1$. Together we get $m_1+l_1 \leq m_2+l_2$.
%
\end{proof}

\begin{thm}
\label{thm_ratio}
Let $S_1$, $S_2$ be rooted trees such that $S_1 \subseteq S_2$. Then for sufficiently large $n$
$$
\frac{g_{\mathcal{B}_n}(S_1)}{a_{\mathcal{B}_n}(S_1)} \leq \frac{g_{\mathcal{B}_n}(S_2)}{a_{\mathcal{B}_n}(S_2)}.
$$
\end{thm}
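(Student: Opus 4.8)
The plan is to combine the explicit asymptotic ratio of Corollary~\ref{cor_ratio} with the monotonicity of the limits proved in Theorem~\ref{thm_asm_ratio}, treating separately the degenerate situation in which the two limits coincide. Throughout I read the statement along odd $n$, which are the only sizes for which $a_{\mathcal{B}_n}(S)\neq 0$.

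First I would set $k_i=\frac{m_i+l_i-2}{2}$ and write $L(S)=\lim_{n\to\infty}\sqrt{n}\,g_{\mathcal{B}_n}(S)/a_{\mathcal{B}_n}(S)$. From Corollary~\ref{cor_ratio}, $L(S)=\sqrt{2}\,f(k)$ with $f(k)=\Gamma(k+1/2)/\Gamma(k)$ when $k>0$, and $L(S)=0$ when $k=0$. In the proof of Theorem~\ref{thm_asm_ratio} it was established that $S_1\subseteq S_2$ forces $m_1\le m_2$ and $l_1\le l_2$ (hence $k_1\le k_2$) and that $f$ is strictly increasing on $\{\tfrac12,1,\tfrac32,\dots\}$; together with $f(k)>0$ this makes $L$ a strictly increasing function of $k$ on $\{0,\tfrac12,1,\dots\}$, so $k_1<k_2$ implies $L(S_1)<L(S_2)$.

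The principal case is $k_1<k_2$. Then $L(S_1)<L(S_2)$, and setting $\epsilon=\tfrac13\bigl(L(S_2)-L(S_1)\bigr)>0$, the convergence $\sqrt{n}\,g_{\mathcal{B}_n}(S_i)/a_{\mathcal{B}_n}(S_i)\to L(S_i)$ provides an $n_0$ such that for every odd $n\ge n_0$
\[
\sqrt{n}\,\frac{g_{\mathcal{B}_n}(S_1)}{a_{\mathcal{B}_n}(S_1)}<L(S_1)+\epsilon<L(S_2)-\epsilon<\sqrt{n}\,\frac{g_{\mathcal{B}_n}(S_2)}{a_{\mathcal{B}_n}(S_2)}.
\]
Cancelling the positive factor $\sqrt{n}$ yields the claimed inequality for all large odd $n$.

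The remaining case $k_1=k_2$ is the real obstacle, since equality of the limits $L(S_1)=L(S_2)$ carries no information about the order of the finite-$n$ ratios and would naively seem to demand a second-order expansion of the coefficient asymptotics. I would avoid this by showing the case is in fact degenerate: $k_1=k_2$ means $m_1+l_1=m_2+l_2$, which together with $m_1\le m_2$ and $l_1\le l_2$ forces $m_1=m_2$ and $l_1=l_2$. Since an embedding of $S_1$ into $S_2$ with $|S_1|=|S_2|$ must occupy every vertex of $S_2$, we get $S_1\cong S_2$, whence $a_{\mathcal{B}_n}(S_1)=a_{\mathcal{B}_n}(S_2)$ and $g_{\mathcal{B}_n}(S_1)=g_{\mathcal{B}_n}(S_2)$ for every $n$, and the inequality holds with equality. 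Thus the only delicate point is recognizing that the limits can coincide only for isomorphic trees, so that no finer asymptotic analysis is actually required.
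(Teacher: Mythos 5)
Your proof is correct and follows essentially the same route as the paper's: both rest on Corollary~\ref{cor_ratio}, the strict monotonicity of $f(k)=\Gamma(k+1/2)/\Gamma(k)$, the facts $m_1\le m_2$ and $l_1\le l_2$, and the observation that equality $k_1=k_2$ forces $S_1\cong S_2$ and hence equal ratios. The only difference is presentational: the paper argues by contradiction along a subsequence where the inequality would fail, while you run the same two cases ($k_1<k_2$ and $k_1=k_2$) directly, and you are in fact slightly more explicit that isomorphism follows from a full-size embedding rather than merely from $m_1=m_2$, $l_1=l_2$.
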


\begin{proof}
Let $d_{S_1} = (l_1, u_1, \ldots)$, $d_{S_2} = (l_2, u_2, \ldots)$, $k_1 = \frac{m_1+l_1-2}{2}$,
$k_2 = \frac{m_2+l_2-2}{2}$. Aiming for a contradiction, assume that $S_1 \subseteq S_2$ and that 
there is an increasing sequence $n_0<n_1<n_2<\dots$ such that
$\frac{g_{\mathcal{B}_n}(S_1)}{a_{\mathcal{B}_n}(S_1)} > \frac{g_{\mathcal{B}_n}(S_2)}
{a_{\mathcal{B}_n}(S_2)}$ for all $n\in\{n_i\,\mid\; i\in\mathbb N\}$. 
Then by Theorem \ref{thm_asm_ratio}
$$
\lim_{n \rightarrow \infty} \sqrt{n} ~ \frac{g_{\mathcal{B}_n}(S_1)}{a_{\mathcal{B}_n}(S_1)} = \lim_{n \rightarrow \infty} \sqrt{n} ~ \frac{g_{\mathcal{B}_n}(S_2)}{a_{\mathcal{B}_n}(S_2)} = \frac{\sqrt{2} \cdot \Gamma(k_1+1/2)}{\Gamma(k_1)} = \frac{\sqrt{2} \cdot \Gamma(k_2+1/2)}{\Gamma(k_2)}.
$$
Recall that the function $f(k) = \frac{\Gamma(k+1/2)}{\Gamma(k)}$ is increasing in $k$ for $k \in \{\frac{1}{2}, 1, \frac{3}{2}, 2, \frac{5}{2}, \ldots\}$ thus the above equality implies $k_1 = k_2$, or equivalently $m_1 + l_1 = m_2 + l_2$. By $S_1 \subseteq S_2$ we have $l_1 \leq l_2$ and $m_1 \leq m_2$ (see the proof of Theorem \ref{thm_asm_ratio}), therefore we get $l_1=l_2$ and $m_1=m_2$. Thus $S_1$ and $S_2$ are isomorphic and $\frac{g_{\mathcal{B}_n}(S_1)}{a_{\mathcal{B}_n}(S_1)} = \frac{g_{\mathcal{B}_n}(S_2)}{a_{\mathcal{B}_n}(S_2)}$ which is a contradiction.
%
%
\end{proof}

\section{Embedding disconnected structures in $\mathcal{B}_n$} \label{sec_disconnected}
In this section we briefly discuss the case of embedding disconnected structures in $\mathcal{B}_n$. Note that in this case all the embeddings must be bad (the underlying structure $T$ has only one maximal element $\mathds{1}_T$; as long as the induced structure is disconnected, we can be sure that it does not contain the root $\mathds{1}_T$).

Assume that $S$ is a forest, \textit{i.e.} a set of rooted trees $S_1, S_2, \ldots, S_r$ ($r \geq
2$) with the degree distribution sequence $d_S = (l,u,d_2,\ldots,d_{m-1})$. The underlying
structure $T$ is connected, thus $S_1, S_2, \ldots, S_r$ always have a common parent in $T$. Let
$\sigma = (\sigma_1, \sigma_2, \ldots, \sigma_r)$ be a permutation of the set $\{1, 2, \ldots,
r\}$. Define $S^{(\sigma)}$ to be a structure constructed as shown in Figure \ref{S_sigma} - we
add an additional vertex $\mathds{1}_{S^{(\sigma)}}$ to $S$, which is a common parent of $S_1,
S_2, \ldots, S_r$ appearing in the order given by $\sigma$. Now, instead of counting the number of
embeddings of $S$ into $T$ we can simply count the numbers of good embeddings of $S^{(\sigma)}$ in $T$ for all permutations $\sigma$ generating non-isomorphic structures $S^{(\sigma)}$ and sum them up. Thus,
$$
a_{\mathcal{B}_n}(S) = \sum_{\sigma \in \Sigma} g_{\mathcal{B}_n}(S^{(\sigma)}),
$$
where $\Sigma$ is a set of permutations of $\{1,2,\ldots,r\}$ such that whenever $\sigma, \tau \in \Sigma$ and $\sigma \neq \tau$ then $S^{(\sigma)}$ and $S^{(\tau)}$ are not isomorphic. Moreover, whenever $\tau$ is a permutation of $\{1,2,\ldots,r\}$ and $\tau \notin \Sigma$ then there exists $\sigma \in \Sigma$ such that $S^{(\sigma)}$ and $S^{(\tau)}$ are isomorphic.

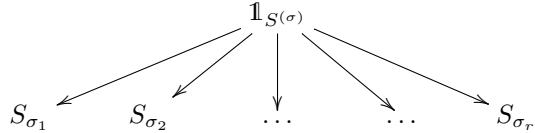
\begin{figure} [!ht]
\center{ 
\begin{displaymath}
    \xymatrix{
			& & \mathds{1}_{S^{(\sigma)}} \ar[dll] \ar[dl] \ar[d] \ar[dr] \ar[drr]  & & \\
			S_{\sigma_1} & S_{\sigma_2} & \ldots & \ldots & S_{\sigma_r}
			}
\end{displaymath}
}
\caption{\footnotesize
The structure of $S^{(\sigma)}$, $\sigma = (\sigma_1, \sigma_2, \ldots, \sigma_r)$.}
\label{S_sigma}
\end{figure}

Note that the asymptotics of $g_{\mathcal{B}_n}(S^{(\sigma)})$ is the same for all $\sigma \in \Sigma$ since the degree distribution sequence of $S^{(\sigma)}$ is the same for all $\sigma \in \Sigma$. It is given by $d_{S^{(\sigma)}} = (\tilde{d}_0, \tilde{d}_1, \ldots, \tilde{d}_{m-1}) = (l,u,\ldots, d_{r-1}, d_r+1, d_{r+1},\ldots,d_{m-1})$. Therefore, by Theorem \ref{thm_asymptotics}
$$
a_{\mathcal{B}_n}(S) \sim \left\{  \begin{array}{ll}
\frac{m!}{k_1! k_2! \ldots k_\ell!} \frac{\tilde{C} \cdot 2^{\frac{6-m-3l}{2}}}{\Gamma(\frac{m+l-2}{2})} \cdot 2^{n} \cdot n^{\frac{m+l-4}{2}} & \textrm {if} \hspace{10pt} $n$ \hspace{5pt} \textrm{is~odd},\\
0 & \textrm{if} \hspace{10pt} $n$ \hspace{5pt} \textrm{is~even}
\end{array}\right.
$$
where $\ell$ is the number of equivalence classes of the set $\{S_1, S_2, \ldots, S_r\}$ with respect to the equivalence relation of being isomorphic and $k_1, k_2, \ldots, k_\ell$ are the cardinalities of those classes. Here $\tilde{C} = \prod_{i=3}^{m-1}(\boldsymbol{C}_{i-1})^{\tilde{d}_i}$. (Note that here we do not consider the case $m+l-2=0$ from Theorem \ref{thm_asymptotics}, because by $r \geq 2$ we always have $m+l-2>0$.)

\section{Non-plane case - embeddings in $\mathcal{V}_n$}
\label{sec_non_plane}

In this section we explain how to take advantage of the results obtained for the plane case in order to infer about the asymptotics of good and all embeddings of a rooted tree $S$ in the family of non-plane binary trees ${\mathcal{V}_n}$.

\begin{thm}
Consider a rooted tree $S$ with degree distribution sequence $d_S=(l,u,d_2, \dots, d_{m-1})$. The
generating function $A_S(z)$ of the sequence $a_{\mathcal{V}_n}(S)$, counting the 
number of all embeddings of $S$ into the family $\mathcal{V}_n$, is given by
\begin{align}
\label{equ:GF_A_S_asymp_thm_nonplane}
A_{S}(z) = \left( \frac{1}{1-zV(z)}\right)^{m+l-1} z^{l+u-1} ~V(z)^{l+u}~ C_{S} ~(1+o(1)) \hspace{1cm} \text{as} \ z \to \pm \rho
\end{align}
where $C_{S}$ is a constant dependent on the structure of $S$ and $V(z)$ is the generating function of the family of non-plane binary trees, satisfying
\begin{align}
\label{equ:GF_non-plane_binary}
V(z) = z + \frac{z}{2}(V(z)^2 + V(z^2)),
\end{align}
which has its dominant singularities at $z = \pm \rho \approx \pm 0.6346$.
\end{thm}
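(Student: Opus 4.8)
The plan is to replay the recursive decomposition of embeddings developed for the family $\mathcal{B}_n$, replacing each plane building block by its non-plane analogue, and then to isolate the singular part by exploiting that every correction forced by the non-plane symmetry factors through the map $z\mapsto z^2$ and is therefore analytic near $\pm\rho$. Concretely, I would construct an embedding of $S$ into a tree of $\mathcal{V}_n$ by the same case distinction as before: a leaf corresponds to pointing at a node (factor $zV'(z)$), a unary node of $S$ is placed and equipped with one extra sibling tree while its genuine child subtree is embedded recursively, a binary node splits the current branch into two independently embedded subtrees, and a node of out-degree $d\ge 3$ is binarised by one of the $\boldsymbol{C}_{d-1}$ binary trees on its $d$ children, introducing auxiliary binary nodes. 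The single structural change is that a non-plane tree has no left/right order, so the path joining the image of a node of $S$ to its nearest embedded ancestor contributes a factor $zV(z)$ per step (a spine node together with one unordered sibling tree) instead of $2zB(z)$, and the corresponding sequence construction yields $\frac{1}{1-zV(z)}$ in place of $\frac{1}{1-2zB(z)}$. Carried through, this reproduces the plane formula with $B$ replaced by $V$ and with $2^u\prod\boldsymbol{C}_{i-1}^{d_i}$ replaced by some structure-dependent quantity.

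This decomposition is, however, no longer exact: whenever two sibling subtrees produced by the construction happen to be isomorphic (the genuine child subtree and its attached partner at a unary node, the pair $S_L,S_R$ at a binary node, or a spine node's continuation and its sibling along a path), the non-plane count identifies configurations that the ordered count separates. Correcting this overcounting introduces precisely the kind of diagonal term that produces $\frac{z}{2}V(z^2)$ in \eqref{equ:GF_non-plane_binary}, and I would collect all such corrections. The crucial analytic observation, and the reason the identity is only asymptotic, is that each correction is a function of $z^2$ (possibly iterated), and since $0<\rho<1$ we have $\rho^2<\rho$; hence $z^2$ stays strictly inside the disk $|z|<\rho$ as $z\to\pm\rho$, so $V(z^2)$, $V'(z^2)$ and all the correction terms are analytic and bounded there.

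Singularity analysis then finishes the argument. Solving \eqref{equ:GF_non-plane_binary} as a quadratic in $V$ gives $1-zV(z)=\sqrt{1-2z^2-z^2V(z^2)}$, which vanishes like $\sqrt{1-z/\rho}$ at the root $\rho$ of $1-2\rho^2-\rho^2V(\rho^2)=0$ (so that $\rho V(\rho)=1$ and $\rho\approx 0.6346$), and by the parity of $V$ forced by $|\mathcal{V}_n|=0$ for even $n$, symmetrically at $-\rho$. Differentiating \eqref{equ:GF_non-plane_binary} yields $zV'(z)=\bigl(1+\tfrac12(V(z)^2+V(z^2))+z^2V'(z^2)\bigr)/(1-zV(z))$, the non-plane counterpart of the plane identity $zB'(z)=B(z)/(1-2zB(z))$; its numerator is analytic and nonzero at $\pm\rho$, so the leaf factor carries the same leading singularity $\mathrm{const}/(1-zV(z))$ as in the plane case. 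Thus the only singular factor of $A_S(z)$ is a power of $\frac{1}{1-zV(z)}$, whose exponent one checks (exactly as for $\mathcal{B}_n$) to be $m+l-1$; all remaining factors, namely the explicit $z^{l+u-1}$ and $V(z)^{l+u}$ kept for parallelism with the plane formula, together with the leaf numerators, the binarisation constants and the collected corrections, are analytic and nonvanishing at $\pm\rho$. Defining $C_S$ to be the limiting value at $\pm\rho$ of $A_S(z)(1-zV(z))^{m+l-1}/(z^{l+u-1}V(z)^{l+u})$ then yields \eqref{equ:GF_A_S_asymp_thm_nonplane}; this $C_S$ is finite and nonzero, and it depends on the shape of $S$ rather than merely on $d_S$ because isomorphism of sibling subtrees is a structural, not a degree-sequence, property.

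I expect the main obstacle to be the bookkeeping of the symmetry corrections: one must verify that every correction genuinely factors through $z\mapsto z^2$, so that its analyticity near $\pm\rho$ is guaranteed, that none of them lowers or raises the leading singular exponent $m+l-1$, and that the resulting constant $C_S$ is indeed finite and nonzero. Everything else is a direct transcription of the plane-case computation together with the coefficient-transfer Lemma~\ref{l_coeff_asym}.
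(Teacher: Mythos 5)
Your proposal fails at its central analytic step: the claim that every correction forced by non-plane symmetry ``factors through $z\mapsto z^2$'' and is therefore analytic and bounded near $\pm\rho$ is false, and it fails precisely at the two places where the non-plane count differs from the plane one, in a way that alters the leading \emph{constant}. (i) The leaf factor. Under the paper's notion of embedding (subposets of $T$ identified under automorphisms of $T$, cf.\ the cherry example in Figure~\ref{cherry_5}), the number of embeddings of a single vertex into $T$ is the number of vertex \emph{orbits} of $T$, not the number of vertices: for the cherry it is $2$, whereas $[z^3]\,zV'(z)=3V_3=3$. So $zV'(z)$ overcounts, and the discrepancy
\[
zV'(z)-\frac{V(z)}{1-zV(z)}=\frac{z^3V'(z^2)}{1-zV(z)}
\]
is \emph{not} analytic at $\pm\rho$: its numerator is indeed a function of $z^2$, but the prefactor $1/(1-zV(z))$ (an arbitrary spine above the node where the overcount occurs) makes it singular of exactly the same order as the leaf factor itself, hence it shifts the constant rather than being negligible. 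This is why the paper introduces the bivariate generating function $V(z,u)$ marking vertex classes and uses the \emph{exact} leaf factor $V_u(z,1)=V(z)/(1-zV(z))$. (ii) Symmetry nodes. When the two subtrees at a binary node of $S$ are isomorphic, the correct contribution is $\frac{z}{(1-zV(z))^2}\cdot\frac12\bigl(A_{S_L}(z)^2+A_{S_L}(z^2)\bigr)$: only the diagonal $A_{S_L}(z^2)$ is analytic at $\pm\rho$, while the factor $\tfrac12$ multiplies the singular main term and survives into the constant --- this is exactly the origin of the factor $(1/2)$ per symmetry node in the paper's $C_S$ (formula \eqref{equ:constant_C_s}). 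Consequently your argument that $A_S(z)(1-zV(z))^{m+l-1}/(z^{l+u-1}V(z)^{l+u})$ tends to a finite nonzero limit rests on a false premise, and taking your claim at face value would yield the plane-type constant $\prod_i \boldsymbol{C}_{i-1}^{d_i}$, which is wrong.

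Note that this also makes your write-up internally inconsistent: if all symmetry corrections were negligible near $\pm\rho$, then $C_S$ would depend only on the degree sequence $d_S$, contradicting your own (correct) closing remark that $C_S$ depends on the shape of $S$ through isomorphism of sibling subtrees. The paper resolves this by making every non-plane construction exact from the outset --- orbit/class marking for the pointing case, the factor $\frac{1}{1-zV(z)}$ for unordered spines, and the unordered-pair construction $\frac12(A^2+A(z^2))$ at symmetry nodes --- and only afterwards discards the genuine diagonal terms $A(z^2)$, which are the only corrections that really are analytic at $\pm\rho$. A repaired version of your argument has to do the same, i.e.\ it essentially becomes the paper's proof. (A minor slip besides: your differentiated identity should read $zV'(z)=\bigl(V(z)+z^3V'(z^2)\bigr)/(1-zV(z))$; your numerator is missing a factor $z$, so it equals $V'(z)$, not $zV'(z)$.)
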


\begin{nrem}
\label{rem:V_and_N}
Note that the value for $\rho$ does not coincide with the one given in \cite[Chapter VII]{Flajolet_book}, since there the size of a tree corresponds to the number of internal nodes, while we count the total number of nodes. This is the reason why in our model the coefficients $V_n$ are zero for even $n$, which yields a periodicity in the generating function that results in the presence of two dominant singularities. 
However, the generating function $N(z)$ of non-plane binary trees where $z$ solely marks the number of internal vertices can easily be connected with our generating function $V(z)$ via $V(z) = z N(z^2)$.
Thus, with the result from \cite{Flajolet_book} that
\begin{align*}
N(z) \sim \frac{1}{\sigma} - a \sqrt{1-\frac{z}{\sigma}}, \hspace{1cm} \text{as} \ z \to \sigma
\end{align*}
with $\sigma \approx 0.4027$ and $a \approx 2.8062$, we immediately know that there are two dominant singularities of $V(z)=zN(z^2)$ at $z = \pm \sqrt{\sigma}$ and we get
\begin{align*}
V(z) = zN(z^2) \sim \pm \sqrt{\sigma} \left( \frac{1}{\sigma} - a \sqrt{2} \sqrt{1 \mp \frac{z}{\sqrt{\sigma}}} \right), \hspace{1cm} \text{as} \ z \to \pm \sqrt{\sigma}.
\end{align*}
Finally, by setting $\rho = \sqrt{\sigma} \approx 0.6346$ and $b = a \sqrt{2 \sigma} \approx 2.5184$ we have
\begin{align*}
V(z) \sim \pm\(\frac{1}{\rho} - b\sqrt{1 \mp \frac{z}{\rho}}\), \hspace{1cm} \text{as} \ z \to \pm \rho.
\end{align*}
\end{nrem}

\begin{proof}
Throughout this proof we write $S_1 \cong S_2$ whenever the structures $S_1$ and $S_2$ are isomorphic. This time we introduce a bivariate  generating function, where $z$ still marks the total number of vertices of a tree, while $u$ is associated with {\it classes of vertices}. Two vertices $v$, $w$ are meant to belong to the same class whenever there exists an isomorphism $f:T \rightarrow T$ such that $f(v) = w$. From \cite{li2016asymptotic} we have 
\begin{align}
\label{equ:V(z,u)}
V(z,u) = zu + \frac{zu}{2} (V(z,u)^2 - V(z^2,u^2) + 2 V(z^2,u)).
\end{align}
By $V_u(z,u)$ we denote the derivative of $V(z,u)$ with respect to $u$, \textit{i.e.} $V_u(z,u) = \frac{\partial V(z,u)}{\partial u}$. We proceed as in the plane case and start with recursively defining the generating function ${A_{S}}(z)$ for the number of embeddings of $S$ into the family $\mathcal{V}_n$, when $S$ is a Motzkin tree:
\begin{align*}
A_{S}(z) = \begin{cases} V_u(z,1) & \quad \text{if} \ S = \bullet \\ 
\frac{ zV(z)}{1-zV(z)} A_{\tilde{S}}(z) & \quad \text{if} \ S = (\bullet, \tilde{S}) \\
\frac{z}{(1-zV(z))^2} {A_{{S_L}}}(z) {A_{{S_R}}}(z) & \quad \text{if} \ S = (\bullet,S_L,S_R) \quad \text{and} \quad S_L \not \cong S_R \\
\frac{z}{(1-zV(z))^2} \frac{1}{2} ({A_{{S_L}}}(z)^2  + {A_{{S_L}}}(z^2)) & \quad \text{if} \ \mathcal{S} = (\bullet,S_L,S_R) \quad \text{and} \quad S_L \cong S_R
\end{cases}.
\end{align*}
The idea of setting up this recursive definition for $A_{S}(z)$ is similar to the plane case with the following differences.
In the first case, corresponding to embedding a single node, we can mark an arbitrary vertex class, instead of an arbitrary vertex, since there might be some non-trivial isomorphisms that would lead to multiple countings of the same embedding.
Furthermore, the paths of left-or-right trees from the previous section, yielding a factor $\frac{1}{1-2zB(z)}$, are now replaced by paths of trees where we do not distinguish between the left-or-right order, since we are in the non-plane setting. Thus, these paths give a factor $\frac{1}{1-zV(z)}$. Finally, in the case when the Motzkin tree starts with a binary root, we have to distinguish between the cases whether the two attached trees are isomorphic or not. The non-isomorphic case works analogously to its plane version, while in the isomorphic case we have to eliminate potential double-countings by using the same idea as for Equation~\eqref{equ:GF_non-plane_binary}.
We do not have to solve the recursion for $A_S(z)$ explicitly, since we are solely interested in the asymptotic behaviour of its coefficients and it is easy to see that asymptotically the contribution of the term $A_{S_L}(z^2)$ is negligible. Since $\rho < 1$ the function $A_S(z^2)$ is analytic at~$z = \rho$. Thus, $[z^n]A_S(z^2) < (\rho+\varepsilon)^{-n}$, which is exponentially smaller than $C \rho^{-n} n^{\beta} = [z^n]A_S(z)$. 

 Thus, by iterating we obtain
\begin{align*}
A_{S}(z) \sim \left( \frac{z}{(1-zV(z))^2} \right)^{l-1} V_u(z,1)^{l} \left( \frac{zV(z)}{1-zV(z)} \right)^{u} \left( \frac{1}{2} \right)^s, \hspace{1cm} \text{as} \ z \to \rho,
\end{align*} 
where $l$ denotes the number of leaves, $u$ the number of unary nodes and $s$ the number of symmetry nodes in $S$ (a symmetry node is a parent of two isomorphic subtrees). An analogous expansion holds for $z\to-\rho$ (only the $1-zV(z)$ in the denominators must be replaced by $1+zV(z)$). 

In the general case where $S$ is an arbitrary non-plane tree, \textit{i.e.} a P\'olya tree, we
proceed as in the previous section and consider the embeddings of all non-plane unary-binary trees
obtained by replacing $d$-ary nodes with $d \geq 3$ together with their children by binary trees
with $d$ leaves. Thus, again taking into account that there are $m-l-u$ binary nodes that were
already there before the replacement (as binary or $d$-ary nodes with $d \geq 3$) and $2l+u-m-1$
auxiliary binary nodes that were introduced by the replacement, we get
\begin{align}
\label{equ:GF_A_S_asymp_nonplane}
A_{S}(z) \sim \left( \frac{z}{1-zV(z)} \right)^{2l+u-m-1}\left( \frac{z}{(1-zV(z))^2}
\right)^{m-l-u} V_u(z,1)^{l} \left( \frac{zV(z)}{1-zV(z)} \right)^{u} C_{S},\ \text{ as} \ z \to \rho,
\end{align} 
and the analogous expansion for $z\to-\rho$. 
The constant $C_S$ arises from the isomorphisms and reads as
\begin{align}
\label{equ:constant_C_s}
C_S = \sum_{\substack{t \in \mathcal{M}_{S} \\ s \ \text{symmetry node of $t$}}} \left( \frac{1}{2} \right)^s,
\end{align}
where $\mathcal{M}_{S}$ denotes the set of all non-plane unary-binary trees obtained from $S$ by
replacing the $d$-ary nodes with non-plane binary trees with $d$ leaves for $d \geq 3$.
Differentiating Equation~\eqref{equ:V(z,u)} with respect to $u$ and plugging $u=1$ yields
\[ V_u(z,1) = \frac{V(z)}{1-zV(z)}. \]
Finally, substituting this expression for $V_u(z,1)$ in Equation~\eqref{equ:GF_A_S_asymp_nonplane} yields the desired result.
Note that the asymptotic equivalence \eqref{equ:GF_A_S_asymp_nonplane}, or
\eqref{equ:GF_A_S_asymp_thm_nonplane} respectively, is also true for the case when $S$ is a single node, \textit{i.e.} $l=1$ and $u=s=0$.
\end{proof}

\begin{thm}
\label{thm_asymptotics_non_plane}
Consider a rooted tree $S$ with degree distribution sequence $d_S=(l,u,d_2, \dots, d_{m-1})$. The
asymptotics of the number of all embeddings of $S$ into $\mathcal{V}_n$ is given by
$$
a_{\mathcal{V}_n}(S) \sim \frac{2C_{S} b^{-m-l+1} \rho^{-m-l}}{ \Gamma(\frac{m+l-1}{2})} \cdot \rho^{-n} \cdot n^{\frac{m+l-3}{2}}
$$
for $n$ being odd and $a_{\mathcal{V}_n}(S) = 0$ for $n$ being even.
The asymptotics of the number of good embeddings of $S$ into $\mathcal{V}_n$ is given by
$$
g_{\mathcal{V}_n}(S) \sim \left\{  \begin{array}{ll} 
\frac{2C_{S} b^{-m-l+2} \rho^{-m-l+1}}{\Gamma(\frac{m+l-2}{2})} \cdot \rho^{-n} \cdot n^{\frac{m+l-4}{2}} & \textrm {if} \hspace{10pt} m+l-2 > 0\\
\frac{b}{\sqrt{\pi}} \cdot \rho^{-n} \cdot n^{-3/2} & \textrm{if} \hspace{10pt} m+l-2 = 0,
\end{array}\right.
$$
for $n$ being odd and $g_{\mathcal{V}_n}(S)=0$ for $n$ being even. Here $b \approx 2.5184$, $\rho \approx 0.6346$ and the constant $C_{S}$, given in \eqref{equ:constant_C_s}, depends on the structure of $S$.
\end{thm}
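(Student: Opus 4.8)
The plan is to apply singularity analysis (Lemma~\ref{l_coeff_asym}) to the generating function $A_S(z)$ from the previous theorem and to the generating function of good embeddings, which—exactly as in the plane case—equals $G_S(z):=(1-zV(z))A_S(z)$, since passing from all embeddings to good ones removes the topmost path of trees and hence one factor $(1-zV(z))^{-1}$. Both functions are singular precisely where $zV(z)=1$, that is at $z=\pm\rho$, because $V(\pm\rho)=\pm1/\rho$; away from $\pm\rho$ they are analytic in a suitable $\Delta$-domain, so the factor $(1+o(1))$ in \eqref{equ:GF_A_S_asymp_thm_nonplane} and the neglected term $A_{S_L}(z^2)$ may be discarded when extracting the leading coefficient asymptotics.

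The key local computation is the behaviour of $1-zV(z)$ near the singularities. Substituting the Puiseux expansion $V(z)\sim \pm(1/\rho - b\sqrt{1\mp z/\rho})$ from Remark~\ref{rem:V_and_N} into $zV(z)$ and expanding around $z=\pm\rho$ gives $1-zV(z)\sim b\rho\sqrt{1\mp z/\rho}$. Feeding this together with $V(z)^{l+u}\to(\pm\rho^{-1})^{l+u}$ and $z^{l+u-1}\to(\pm\rho)^{l+u-1}$ into \eqref{equ:GF_A_S_asymp_thm_nonplane} yields, as $z\to\rho$,
\[ A_S(z)\sim K_0\left(1-\frac{z}{\rho}\right)^{-\frac{m+l-1}{2}}, \qquad K_0=C_S\,b^{-m-l+1}\rho^{-m-l}, \]
and a completely analogous expansion as $z\to-\rho$ with the same exponent but constant $K_1=-K_0$, the extra sign coming from $(-1)^{l+u-1}(-1)^{l+u}=-1$ in the product $z^{l+u-1}V(z)^{l+u}$. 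For $G_S(z)$ the additional factor $1-zV(z)\sim b\rho\sqrt{1\mp z/\rho}$ raises the exponent by $1/2$ and multiplies the constant by $b\rho$, producing exponent $(m+l-2)/2$ and constant $\pm\,C_S\,b^{-m-l+2}\rho^{-m-l+1}$.

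Inserting these into Lemma~\ref{l_coeff_asym} with $\rho_0=\rho$, $\rho_1=-\rho$ gives
\[ [z^n]A_S(z)\sim \frac{K_0\,n^{\frac{m+l-3}{2}}}{\Gamma\!\left(\frac{m+l-1}{2}\right)}\bigl(\rho^{-n}-(-\rho)^{-n}\bigr), \]
and the factor $\rho^{-n}-(-\rho)^{-n}=\rho^{-n}(1-(-1)^n)$ equals $0$ for even $n$ and $2\rho^{-n}$ for odd $n$; this simultaneously explains the vanishing at even $n$ and the factor $2$ in the stated constants. The same transfer with the shifted exponent gives the claimed asymptotics of $g_{\mathcal{V}_n}(S)$ whenever $m+l-2>0$, in which case the exponent $(m+l-2)/2$ is positive and hence not a non-positive integer, as Lemma~\ref{l_coeff_asym} requires (note that the exponent $(m+l-1)/2\ge\tfrac12$ for $A_S$ is always admissible).

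Finally, the degenerate case $m+l-2=0$ forces $m=l=1$, i.e. $S$ is a single vertex, and there $G_S(z)=(1-zV(z))\,V_u(z,1)=V(z)$, so $g_{\mathcal{V}_n}(S)=|\mathcal{V}_n|=[z^n]V(z)$. Applying Lemma~\ref{l_coeff_asym} directly to the square-root term of $V$ (now with $\alpha=-1/2$ and $\Gamma(-1/2)=-2\sqrt{\pi}$) at both $\pm\rho$ gives $[z^n]V(z)\sim \frac{b}{\sqrt{\pi}}\rho^{-n}n^{-3/2}$ for odd $n$, matching the second case. The main obstacle is the careful bookkeeping of the Puiseux expansion of $1-zV(z)$ and the tracking of signs at the two real dominant singularities $\pm\rho$; once $1-zV(z)\sim b\rho\sqrt{1\mp z/\rho}$ is in hand, the remainder reduces to a transfer of precisely the type already performed in the plane case.
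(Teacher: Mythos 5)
Your proposal is correct and follows essentially the same route as the paper: both rest on the local expansion $1-zV(z)\sim b\rho\sqrt{1\mp z/\rho}$ at the two dominant singularities $\pm\rho$, apply Lemma~\ref{l_coeff_asym} to $A_S(z)$ and to $G_S(z)=(1-zV(z))A_S(z)$, and dispose of the case $m+l-2=0$ by identifying $g_{\mathcal{V}_n}(S)$ with $|\mathcal{V}_n|=[z^n]V(z)$. In fact your write-up is more explicit than the paper's terse proof, particularly in tracking the sign $K_1=-K_0$ at $z=-\rho$ and the resulting parity factor $\rho^{-n}(1-(-1)^n)$, which is exactly what produces the factor $2$ for odd $n$ and the vanishing for even $n$.
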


\begin{proof}
First, note that $V(\rho) \sim \frac{1}{\rho}$, which was already outlined in Remark~\ref{rem:V_and_N}. Therefore, the dominant part of the asymptotics of the coefficients of $A_{S}(z)$ comes from the factors $\frac{1}{1-zV(z)}$, which give
\begin{align*}
\frac{1}{1-zV(z)} \sim \frac{1}{\rho b \sqrt{1-\frac{z}{\rho}}} \hspace{1cm} \text{for} \ z \to \rho.
\end{align*}
The result for $a_{\mathcal{V}_n}(S)$ follows immediately by use of Lemma~\ref{l_coeff_asym}.
As in the plane case, the generating function $G_S(z)$ for the good embeddings just differs from $A_S(z)$ by a factor $(1-zV(z))$ and thus, the asymptotic behaviour of its coefficients can be determined analogously.
Recall that $m+l-2 = 0$ represents the case where $S$ is a single vertex. The number of good embeddings is therefore just the cardinality of $\mathcal{V}_n$ (see Remark~\ref{rem:V_and_N}).
\end{proof}

Now we can formulate a corollary analogous to Corollary \ref{cor_ratio} from the plane case.

\begin{ncor}
\label{cor_ratio_non_plane}
Consider a rooted tree $S$ with degree distribution sequence $d_S=(l,u,d_2, \dots, d_{m-1})$. Let $k = \frac{m+l-2}{2}$ and let $n$ be odd. The asymptotic ratio of the number of good embeddings of $S$ into $\mathcal{V}_n$ to the number of all embeddings into $\mathcal{V}_n$ is given by
$$
\frac{g_{\mathcal{V}_n}(S)}{a_{\mathcal{V}_n}(S)} \sim \left\{  \begin{array}{ll}
\frac{\Gamma(k+1/2)}{\Gamma(k)} \frac{b \rho}{\sqrt{n}} & \textrm {if} \hspace{10pt} k > 0,\\
1/n & \textrm {if} \hspace{10pt} k = 0.
\end{array}\right.
$$
\end{ncor}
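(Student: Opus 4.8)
The plan is to read the ratio directly off the two asymptotic expansions in Theorem~\ref{thm_asymptotics_non_plane}, exactly as Corollary~\ref{cor_ratio} was obtained from Theorem~\ref{thm_asymptotics} in the plane case. The entire argument is a division of asymptotic equivalents, so the only work lies in checking that the structural factors collapse to the claimed form, and in separating the generic case $k>0$ from the degenerate case $k=0$.

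First I would treat the generic case $k>0$, i.e.\ $m+l-2>0$, where $a_{\mathcal{V}_n}(S)$ and the first branch of $g_{\mathcal{V}_n}(S)$ are both available. Forming the quotient
\[
\frac{g_{\mathcal{V}_n}(S)}{a_{\mathcal{V}_n}(S)}\sim
\frac{\dfrac{2C_S\,b^{-m-l+2}\rho^{-m-l+1}}{\Gamma\!\left(\frac{m+l-2}{2}\right)}\;\rho^{-n}\,n^{\frac{m+l-4}{2}}}
{\dfrac{2C_S\,b^{-m-l+1}\rho^{-m-l}}{\Gamma\!\left(\frac{m+l-1}{2}\right)}\;\rho^{-n}\,n^{\frac{m+l-3}{2}}},
\]
I expect everything structural to cancel: the exponential factor $\rho^{-n}$, the prefactor $2$, and—crucially—the structure-dependent constant $C_S$ all drop out; the powers of $b$ and of $\rho$ combine into the single factor $b\rho$; the two Gamma values become $\Gamma\!\left(\frac{m+l-1}{2}\right)/\Gamma\!\left(\frac{m+l-2}{2}\right)=\Gamma(k+1/2)/\Gamma(k)$ after substituting $k=\frac{m+l-2}{2}$; and the powers of $n$ leave $n^{-1/2}$. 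This gives $\frac{\Gamma(k+1/2)}{\Gamma(k)}\frac{b\rho}{\sqrt{n}}$, the first branch of the claim. It is worth recording that $C_S$ cancels, so that the ratio does not depend on the isomorphism constant, just as in the plane setting.

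Second I would handle $k=0$ separately, since then $m=l=1$, the embedded structure $S$ is a single vertex, and the second branch of $g_{\mathcal{V}_n}(S)$ applies with $C_S=1$. Here $g_{\mathcal{V}_n}(S)=|\mathcal{V}_n|$, because the only good embedding of a single vertex is the root, whereas $a_{\mathcal{V}_n}(S)=[z^n]V_u(z,1)$ counts, for each tree, its vertices up to automorphism. Dividing the $k=0$ branch of $g_{\mathcal{V}_n}(S)$ by $a_{\mathcal{V}_n}(S)$ again cancels $\rho^{-n}$ and reduces the powers of $n$ to $n^{-1}$, yielding the claimed order $1/n$.

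The step I expect to be the main obstacle is precisely the constant in the case $k=0$. In the plane case the exact identity $a_{\mathcal{B}_n}(\bullet)=n\,g_{\mathcal{B}_n}(\bullet)$ forces the ratio to be exactly $1/n$; in the non-plane setting no such identity is available, because the passage from vertices to automorphism orbits inserts the singular factor $1/(1-zV(z))$ coming from $V_u(z,1)=V(z)/(1-zV(z))$. Carrying the quotient of the two branches of Theorem~\ref{thm_asymptotics_non_plane} through explicitly produces a constant multiple $\tfrac{1}{2}b^2\rho^2$ of $1/n$, and the one delicate point is to verify that this constant is the value stated in the corollary rather than to infer it by analogy with the plane case. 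The remaining bookkeeping—the cancellation of $C_S$ and the harmless role of the subdominant singularity at $-\rho$, which contributes only for odd $n$—is routine, so apart from pinning down this constant the proof is immediate from Theorem~\ref{thm_asymptotics_non_plane}.
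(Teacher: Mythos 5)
Your treatment of the case $k>0$ is correct and coincides with the paper's own (implicit) argument: the corollary is stated as an immediate consequence of Theorem~\ref{thm_asymptotics_non_plane}, exactly as Corollary~\ref{cor_ratio} follows from Theorem~\ref{thm_asymptotics} in the plane case, and in the quotient the factors $2$, $C_S$ and $\rho^{-n}$ cancel, leaving $\frac{\Gamma(k+1/2)}{\Gamma(k)}\,b\rho\,n^{-1/2}$.

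The case $k=0$ contains a genuine gap, and it is one you cannot close. As you computed, dividing the two branches of Theorem~\ref{thm_asymptotics_non_plane} for $S=\bullet$ (where $m=l=1$, $C_S=1$) gives
\[
\frac{g_{\mathcal{V}_n}(\bullet)}{a_{\mathcal{V}_n}(\bullet)}\;\sim\;\frac{\frac{b}{\sqrt{\pi}}\,\rho^{-n}\,n^{-3/2}}{\frac{2b^{-1}\rho^{-2}}{\sqrt{\pi}}\,\rho^{-n}\,n^{-1/2}}\;=\;\frac{b^2\rho^2}{2}\cdot\frac{1}{n},
\]
and you leave ``verifying that this constant is the value stated in the corollary'' as the delicate open step. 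That verification is impossible: by Remark~\ref{rem:V_and_N} one has $b=a\sqrt{2\sigma}$ and $\rho=\sqrt{\sigma}$, so $\tfrac12 b^2\rho^2=a^2\sigma^2\approx(2.8062\cdot 0.4027)^2\approx 1.28\neq 1$. The structural reason is the one you yourself point at: in the non-plane setting embeddings are identified under automorphisms of $T$ (see Figure~\ref{cherry_5}, and $A_\bullet(z)=V_u(z,1)=V(z)/(1-zV(z))$ counts vertex \emph{orbits}), so the exact plane identity $a_{\mathcal{B}_n}(\bullet)=n\,g_{\mathcal{B}_n}(\bullet)$ has no analogue; instead $a_{\mathcal{V}_n}(\bullet)\sim\frac{2}{b^2\rho^2}\,n\,|\mathcal{V}_n|$. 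What your computation therefore actually reveals is that the $k=0$ branch of the corollary is inconsistent with Theorem~\ref{thm_asymptotics_non_plane}: granting the theorem, the ratio for a single vertex is $\sim\frac{b^2\rho^2}{2n}\approx 1.28/n$, and the stated $1/n$ appears to be carried over from the plane case where the exact identity forces it. A correct write-up must either derive this constant and flag the discrepancy, or refute the theorem's single-vertex asymptotics; asserting ``the claimed order $1/n$'' and deferring the constant leaves the stated claim unproved---and, as stated, unprovable.
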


\begin{thm}
Let $S_1$, $S_2$ be rooted trees such that $S_1 \subseteq S_2$. Then
$$
\lim_{n \rightarrow \infty} \sqrt{n} ~ \frac{g_{\mathcal{V}_n}(S_1)}{a_{\mathcal{V}_n}(S_1)} \leq \lim_{n \rightarrow \infty} \sqrt{n} ~ \frac{g_{\mathcal{V}_n}(S_2)}{a_{\mathcal{V}_n}(S_2)}.
$$
\end{thm}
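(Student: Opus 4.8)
The plan is to follow, essentially verbatim, the strategy used in the proof of Theorem~\ref{thm_asm_ratio} (the plane case), exploiting the fact that Corollary~\ref{cor_ratio_non_plane} has exactly the same shape as Corollary~\ref{cor_ratio}: the asymptotic ratio is again a fixed positive constant times $f(k)=\frac{\Gamma(k+1/2)}{\Gamma(k)}$, with the only change being that the prefactor $\sqrt{2}$ is replaced by $b\rho$. First I would set $d_{S_1}=(l_1,u_1,\dots)$, $d_{S_2}=(l_2,u_2,\dots)$, let $m_i$ denote the size of $S_i$, and put $k_1=\frac{m_1+l_1-2}{2}$, $k_2=\frac{m_2+l_2-2}{2}$; the case $k_1=0$ is trivial and is disposed of immediately. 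For $k_1>0$, Corollary~\ref{cor_ratio_non_plane} gives
$$
\lim_{n\to\infty}\sqrt{n}\,\frac{g_{\mathcal{V}_n}(S_i)}{a_{\mathcal{V}_n}(S_i)} = b\rho\,\frac{\Gamma(k_i+1/2)}{\Gamma(k_i)} = b\rho\, f(k_i),\qquad i\in\{1,2\},
$$
where $b\rho>0$. Hence the claimed inequality is equivalent to $f(k_1)\le f(k_2)$.

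The second step reuses, without any modification, the two facts already proved within Theorem~\ref{thm_asm_ratio}. The first is that $f$ is increasing on the half-integer lattice $\{\tfrac12,1,\tfrac32,2,\dots\}$, which was established there by applying Gautschi's inequality (Lemma~\ref{l_Gautschi}) twice; since the values $k_1,k_1+1/2,k_2,k_2+1/2$ all lie in this lattice, that monotonicity is exactly what I need here. The second is that $S_1\subseteq S_2$ forces $k_1\le k_2$: indeed $m_1\le m_2$ trivially, and $l_1\le l_2$ because the number of leaves equals the size of the largest antichain, which cannot decrease when $S_1$ is embedded as a subposet of $S_2$. Combining $k_1\le k_2$ with the monotonicity of $f$ yields $f(k_1)\le f(k_2)$, and multiplying by the positive constant $b\rho$ finishes the argument.

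I do not expect a genuine obstacle in this proof, and this is the point worth emphasizing: relative to the plane case the only thing that changes is the numerical prefactor $b\rho$ in the asymptotic ratio, and since this prefactor is a fixed positive number shared by both $S_1$ and $S_2$, it cancels out of the comparison entirely. The substantive content — the monotonicity of $f$ and the implication $S_1\subseteq S_2\Rightarrow k_1\le k_2$ — is inherited directly from the plane argument. The only thing meriting a line of care is to note that the combinatorial inequalities $m_1\le m_2$ and $l_1\le l_2$ depend solely on the subposet relation defining $S_1\subseteq S_2$, which is identical in the $\mathcal{B}_n$ and $\mathcal{V}_n$ settings; hence no non-plane-specific subtlety enters, and the proof is a faithful transcription of the plane case.
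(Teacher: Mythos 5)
Your proposal is correct and follows essentially the same route as the paper: the paper's own proof likewise invokes Corollary~\ref{cor_ratio_non_plane} to reduce the claim to comparing $\frac{\Gamma(k+1/2)}{\Gamma(k)}\,b\rho$ at $k_1$ and $k_2$, and then declares the rest analogous to Theorem~\ref{thm_asm_ratio} (Gautschi-based monotonicity of $f$ on half-integers plus $S_1\subseteq S_2\Rightarrow m_1+l_1\le m_2+l_2$). Your write-up simply makes explicit what the paper leaves as ``analogous,'' including the correct observation that the constant prefactor $b\rho$ cancels from the comparison.
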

\begin{proof}
By Corollary \ref{cor_ratio_non_plane} we get that for any $S$ with $d_S = (l,u,d_2, \ldots, d_{m-1})$ 
$$
\lim_{n \rightarrow \infty} \sqrt{n} ~ \frac{g_{\mathcal{V}_n}(S)}{a_{\mathcal{V}_n}(S)} = \frac{\Gamma(k+1/2)}{\Gamma(k)} b \rho
$$
where $k = \frac{m+l-2}{2} > 0$. The rest of the proof is then analogous to the proof of Theorem \ref{thm_asm_ratio}.
\end{proof}

\begin{ncor}
	Let $S_1$, $S_2$ be rooted trees such that $S_1 \subseteq S_2$. Then for sufficiently large $n$
	$$
	\frac{g_{\mathcal{V}_n}(S_1)}{a_{\mathcal{V}_n}(S_1)} \leq \frac{g_{\mathcal{V}_n}(S_2)}{a_{\mathcal{V}_n}(S_2)}.
	$$
	(Compare Theorem \ref{thm_ratio} and its proof.)
\end{ncor}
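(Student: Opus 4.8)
The plan is to mirror the proof of Theorem~\ref{thm_ratio} almost verbatim, replacing the family $\mathcal{B}_n$ by $\mathcal{V}_n$ and substituting the non-plane asymptotic results proved above. The engine of the whole argument is Corollary~\ref{cor_ratio_non_plane}, which tells us that the scaled limit
$$
\lim_{n \to \infty} \sqrt{n}~\frac{g_{\mathcal{V}_n}(S)}{a_{\mathcal{V}_n}(S)} = \frac{\Gamma(k+1/2)}{\Gamma(k)}\, b\rho, \qquad k = \frac{m+l-2}{2},
$$
depends on $S$ only through the single parameter $k$, and that the prefactor $b\rho$ is a positive constant common to all $S$. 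Hence all comparisons between two trees reduce to comparing their values of $k$.

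First I would set up a proof by contradiction exactly as in Theorem~\ref{thm_ratio}. Writing $d_{S_1}=(l_1,u_1,\dots)$, $d_{S_2}=(l_2,u_2,\dots)$ and $k_i=\frac{m_i+l_i-2}{2}$, I assume $S_1\subseteq S_2$ yet that the claimed inequality fails along some increasing sequence $n_0<n_1<n_2<\cdots$, i.e. $\frac{g_{\mathcal{V}_n}(S_1)}{a_{\mathcal{V}_n}(S_1)} > \frac{g_{\mathcal{V}_n}(S_2)}{a_{\mathcal{V}_n}(S_2)}$ for every $n$ in that sequence. Multiplying through by $\sqrt{n}$ and passing to the limit along this subsequence, the theorem immediately preceding this corollary (the non-plane counterpart of Theorem~\ref{thm_asm_ratio}) forces the two scaled limits to coincide, whence $\frac{\Gamma(k_1+1/2)}{\Gamma(k_1)} = \frac{\Gamma(k_2+1/2)}{\Gamma(k_2)}$ after cancelling the common factor $b\rho$.

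Next I would invoke the strict monotonicity of $f(k)=\frac{\Gamma(k+1/2)}{\Gamma(k)}$ on $\{\tfrac12,1,\tfrac32,\dots\}$, which was established via Gautschi's inequality in the proof of Theorem~\ref{thm_asm_ratio} and applies unchanged here. Since $f$ is strictly increasing on the admissible set and $k_1,k_2$ lie in it, the equality of function values yields $k_1=k_2$, i.e. $m_1+l_1=m_2+l_2$. From $S_1\subseteq S_2$ I get $m_1\le m_2$ and, by the largest-antichain argument reproduced in the proof of Theorem~\ref{thm_asm_ratio}, also $l_1\le l_2$; combined with $m_1+l_1=m_2+l_2$ this forces $m_1=m_2$ and $l_1=l_2$. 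Finally $m_1=m_2$ together with $S_1\subseteq S_2$ means the embedding of $S_1$ into $S_2$ is a bijection on vertex sets, so $S_1$ and $S_2$ are isomorphic and therefore $\frac{g_{\mathcal{V}_n}(S_1)}{a_{\mathcal{V}_n}(S_1)} = \frac{g_{\mathcal{V}_n}(S_2)}{a_{\mathcal{V}_n}(S_2)}$ for every $n$, contradicting the strict inequality along the sequence.

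Since the argument is purely formal once the non-plane asymptotics are available, I do not anticipate a genuine obstacle; the only point warranting a moment's attention is the role of the structure-dependent constant $C_S$. Unlike $b\rho$, this constant differs between $S_1$ and $S_2$, but it enters the asymptotics of $a_{\mathcal{V}_n}(S)$ and $g_{\mathcal{V}_n}(S)$ identically (see Theorem~\ref{thm_asymptotics_non_plane}) and hence cancels in their ratio. I would therefore confirm explicitly that neither $C_S$ nor the universal prefactor $b\rho$ interferes with the comparison, after which the plane-case reasoning transfers without modification.
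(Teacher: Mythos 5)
Your proposal is correct and takes essentially the same route as the paper: the paper's own proof of this corollary is just the pointer to Theorem~\ref{thm_ratio}, i.e.\ the plane-case contradiction argument repeated with the non-plane scaled limits from Corollary~\ref{cor_ratio_non_plane}, which is exactly what you carried out. Your closing check that the structure-dependent constant $C_S$ cancels in the ratio (so that only $k=\frac{m+l-2}{2}$ matters) is correct and is already implicit in Corollary~\ref{cor_ratio_non_plane}.
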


Now, let us comment on embedding disconnected structures in a non-plane case. Let $S$ be a forest,
\textit{i.e.} a set of rooted trees $S_1, S_2, \ldots, S_r$, $r \geq 2$. Again, instead of
counting all embeddings of $S$ into $\mathcal{V}_n$, we can count the good embeddings of $\tilde{S}$ in $\mathcal{V}_n$, where $\tilde{S}$ is a forest $S$ with an additional common parent that clips together all $S_i$'s. Note that in the non-plane case the order of $S_i$'s does not matter, thus we simply have
$$
{a}_{\mathcal{V}_n}(S) = {g}_{\mathcal{V}_n}(\tilde{S}).
$$

\section{Planted plane case - embeddings in $\mathcal{T}_n$}
\label{sec_planted_plane}

In this section we extend the results from plane binary trees to planted plane trees, \textit{i.e.} to rooted trees where each internal node can have arbitrarily many child-nodes and the order of the subtrees is important. 
The structures that we embed are as well planted plane trees, and therefore every such a tree $S$ is of the form $S = (\bullet,S_1,\ldots,S_k)$, where the $S_i$'s denote the subtrees that are attached to the root. 
The following lemma contains the construction of the generating function $A_{S}(z)$ of all embeddings of the tree $S$ in the family $\mathcal{T}_n$ of planted plane trees of size $n$.

\begin{lemma}
The generating function $A_{S}(z)$ of all embeddings of $S = (\bullet,S_1,\ldots,S_k)$ into the family $\mathcal{T}_n$ of planted plane trees of size $n$ can be recursively specified as
\begin{align}
\label{equ:GF_plantedplane_As}
A_{S}(z) = \begin{cases} \D zT'(z) = \frac{T(z)(1-T(z))}{1-2T(z)} & \quad \text{if} \ k=0 \\[4mm]
\D\frac{T(z)}{1-2T(z)} {A_{{S_1}}}(z) & \quad \text{if} \ k=1 \\[4mm]
\D\frac{T(z)^2}{(1-2T(z))^2(1-T(z))} {A_{{S_1}}}(z) {A_{{S_2}}}(z) & \quad \text{if} \  k = 2
\\[6mm]
\D\frac{T(z)}{(1-2T(z))^2} \Bigg(  \frac{1-2T(z)}{1-T(z)} A_{S_1}(z) A_{S_{2,k}}(z) \\[5mm]
\D \quad + \frac{T(z)(1-2T(z))}{(1-T(z))^2} A_{S_{1,k-1}}(z) A_{S_k}(z) \\[4mm]
\D \quad + \left( \frac{1-2T(z)}{1-T(z)} \right)^2 \left( A_{S_{1,2}}(z) A_{S_{3,k}}(z) + \ldots
+A_{S_{1,k-2}}(z) A_{S_{k-1,k}}(z) \right) \Bigg) 
& \quad \text{if} \ k > 2
\end{cases}
\end{align}
where $T(z)$ denotes the generating function of the family of planted plane trees, i.e.
\[
T(z) = \frac{1-\sqrt{1-4 z}}{2} = \boldsymbol{C}_0 z + \boldsymbol{C}_1 z^2 + \boldsymbol{C}_2 z^3 + \ldots,
\]
and  $S_{i,j}$ denotes the tree $S_{i,j} = (\bullet,S_i,\ldots,S_j)$ that consists of a root to which the $j-i+1$ subtrees $S_i, \ldots, S_j$ are attached \textnormal{(}in that order\textnormal{)}.
\end{lemma}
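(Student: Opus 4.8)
The plan is to determine $A_S(z)$ by the symbolic method, translating each structural feature of an embedding into a factor of the generating function, exactly as in the plane binary case but now with the class equation $T=z/(1-T)$ (equivalently $z=T(1-T)$, so that $T'=1/(1-2T)$) replacing $B=z+zB^2$. The base case $k=0$ is the pointing operator: embedding a single node amounts to marking an arbitrary node of the host, so $A_\bullet(z)=zT'(z)=\frac{T(1-T)}{1-2T}$, which also pins down the two elementary building blocks I will reuse throughout. First, a maximal chain of skipped host-nodes, each carrying a free subtree on either side of the descending edge, contributes the geometric factor $\sum_{i\ge 0}\bigl(z/(1-T)^2\bigr)^i=\frac{1-T}{1-2T}$, since $z/(1-T)^2=T/(1-T)$; second, a single downward step into a chosen child with free siblings contributes $z/(1-T)^2=T/(1-T)$.

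For $k=1$ the root of $S$ is embedded at some node, its unique child-subtree $S_1$ is embedded inside one of the descendant subtrees, and the free left/right siblings together with the lift above the embedded root assemble into the factor $\frac{T}{1-2T}$. I would verify this by first computing the root-to-root (``good'') contribution $\frac{z}{(1-T)^2}A_{S_1}$ and then multiplying by the context factor $\frac{1-T}{1-2T}$ produced by the path of hanging subtrees lying above the image of the root; this same context factor converts good embeddings to all embeddings in every case.

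The heart of the argument is $k\ge 2$. Here I would classify an embedding by the first node $x$ below the image of the root at which the images of $S_1,\dots,S_k$ separate. Preserving the plane order forces these images into consecutive blocks among the descendants of $x$, so that resolving the branch yields a contiguous splitting $(S_1,\dots,S_a)\mid(S_{a+1},\dots,S_k)$; the descent from the root down to $x$ contributes the chain factor $\frac{1-T}{1-2T}$, and the two sides are embeddings of the re-rooted trees $S_{1,a}$ and $S_{a+1,k}$, accounted for by $A_{S_{1,a}}$ and $A_{S_{a+1,k}}$. Summing over the split point $a$ and simplifying with $z=T(1-T)$ then yields the stated closed forms; for $k=2$ the single admissible branch collapses the sum to the compact product formula.

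The step I expect to be the main obstacle is the boundary bookkeeping in the $k>2$ sum. When a block consists of a single subtree, $S_1$ (the case $a=1$) or $S_k$ (the case $a=k-1$), the image of its root may sit exactly at the branch node $x$, whereas for a block of size at least two the incomparability of the images forbids this; these regimes attach different path/step factors, which is exactly why the summand for $a=1$, the summand for $a=k-1$, and the interior summands $2\le a\le k-2$ carry three distinct coefficients. Getting these coefficients right, and proving that each embedding is counted exactly once — in particular ruling out overcounting from the freedom in where an inert transit node is placed along the descending chain, which I resolve by always placing it at the lowest admissible position, as in the binary case — is the delicate part. Once the recursion is set up correctly, solving it is a routine manipulation using $z/(1-T)^2=T/(1-T)$ and $1-z/(1-T)^2=\frac{1-2T}{1-T}$.
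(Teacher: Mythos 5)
Your overall plan --- pointing for $k=0$, the chain factor $\frac{1-T}{1-2T}$ for the material above the embedded root, and, for $k\ge 2$, classifying embeddings by the node $x$ at which the images of $S_1,\dots,S_k$ separate, summed over contiguous splits $(S_1,\dots,S_a)\mid(S_{a+1},\dots,S_k)$ --- is the same as the paper's. The genuine gap sits exactly in the step you yourself flag as delicate: your explanation of the three distinct coefficients is wrong. You claim that for a singleton block ($a=1$ or $a=k-1$) the image of its root ``may sit exactly at the branch node $x$'', while incomparability forbids this for blocks of size at least two. In fact, for $k\ge 2$ \emph{no} vertex of any $S_i$ can ever be embedded at $x$, regardless of block sizes: $x$ is a common ancestor of images of vertices from at least two distinct subtrees $S_i$, $S_j$, hence comparable to all of them, whereas a vertex of $S_i$ is incomparable in $S$ to every vertex of $S_j$; the paper states this explicitly (``no node of $S$ can be embedded into the splitting node''). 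Moreover, your mechanism treats the two boundary cases symmetrically, so it would force the $a=1$ and $a=k-1$ summands to carry the same coefficient; in the stated formula they carry different ones ($\frac{1-2T}{1-T}$ versus $\frac{T(1-2T)}{(1-T)^2}$), so your argument cannot produce the claimed identity.

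What actually generates the coefficients in the paper's proof is an asymmetric decomposition at $x$ that your write-up is missing. The left factor refers to a single child subtree of $x$ (the leftmost one containing embedded vertices); there a singleton prefix $S_1$ may be embedded anywhere, including at that subtree's root, giving \emph{all} embeddings $A_{S_1}$, while a prefix of length at least two cannot occupy that root (incomparability again), giving \emph{good} embeddings $\frac{1-2T}{1-T}A_{S_{1,a}}$ of the re-rooted prefix. The right factor refers to the tree rooted at $x$ itself, consisting of $x$ and all subtrees to the right of the left one; there a singleton suffix $S_k$ must avoid the root $x$, giving \emph{bad} embeddings $\frac{T}{1-T}A_{S_k}$, while a longer suffix avoids $x$ automatically, giving good embeddings $\frac{1-2T}{1-T}A_{S_{a+1,k}}$. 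This all/good versus bad/good asymmetry --- not ``root at $x$'' versus ``root not at $x$'' --- is the source of the three cases, and the device of taking good embeddings into the tree rooted at $x$ is also what reduces a possible separation of the images into three or more child subtrees of $x$ to a two-block split, a configuration your proposal never addresses; with plain factors $A_{S_{1,a}}A_{S_{a+1,k}}$ the sum over $a$ would miscount it. Finally, be aware that in this accounting the splitting node is a node strictly below the embedded root of $S$ (the root carries its own factor $\frac{z}{(1-T)^2}$ while the weight of $x$ is contained in the right factor), so the case in which the images already separate at the embedded root itself is \emph{not} subsumed by letting your descending chain be empty; if you execute your program literally, allowing $x$ to coincide with the root's image, you must treat that case separately and check carefully whether it is consistent with the stated coefficients.
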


\begin{proof}
The case $k=0$ is equivalent to the binary cases, and corresponds to marking an arbitrary node in
the tree $T$. Differentiating both sides of the specification $T(z) = \frac{z}{1-T(z)}$ of planted plane trees with respect to $z$ and solving for $T'(z)$ yields the equality
\[ zT'(z) = \frac{z}{1-2T(z)} = \frac{T(z)(1-T(z))}{1-2T(z)}.\]
Now, let us continue with the proof of the recurrence for the case $k>2$.
In order to do so let us observe Figure \ref{fig:principle_plantedplane} that visualizes how an embedding of a tree $S$ in a tree $T$ can be constructed.
We start with a path of left-or-right plane trees, followed by the embedded root node. Attached to the root node there is another such path, ending with the so-called ``splitting node''. To the left and the right of this second path there can of course be several planted plane trees attached to the embedded root node, which themselves do not contain any embedded vertices.
The two paths that are separated by the embedded root node contribute a factor $\left( \frac{1}{1- \frac{z}{(1-T(z))^2}} \right) ^2$, which can be simplified to $\left( \frac{1-T(z)}{1-2T(z)} \right) ^2$ by means of the functional equation $T(z) = \frac{z}{1-T(z)}$. The root node together with the two sequences of planted plane trees that can be attached to the left or to the right of the path give a factor $\frac{z}{(1-T(z))^2} = \frac{T(z)}{1-T(z)}$.

\begin{figure}
\centering
\includegraphics[scale=0.95]{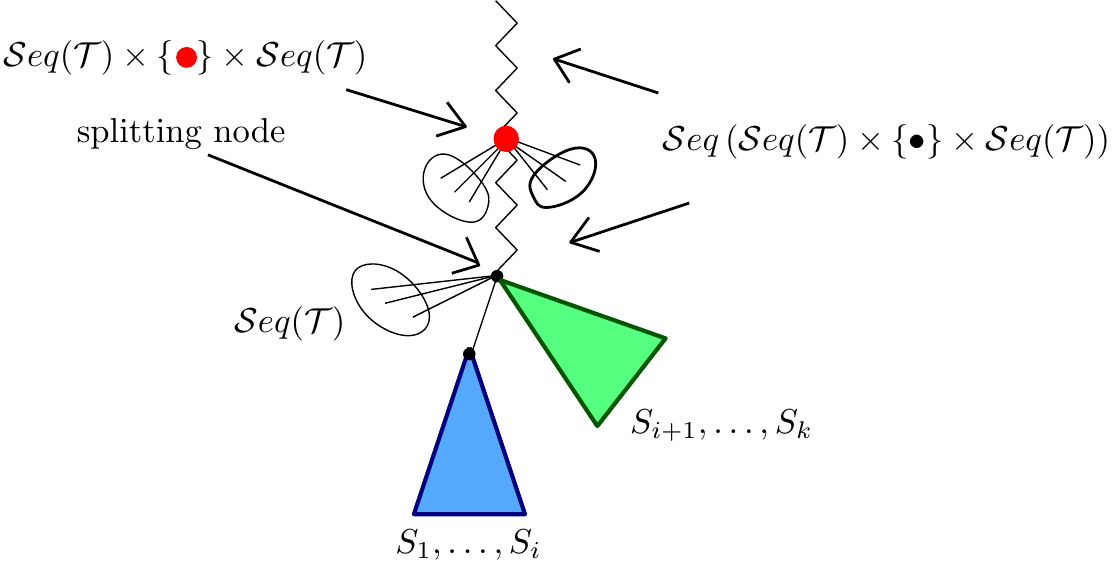}
\caption{\footnotesize
Sketch of the principle of embedding a plane tree $S =(\bullet,S_1,\ldots,S_k)$ into the family of planted plane trees. Here the class $\mathcal{S}eq(\mathcal{T})$ indicates that this part of the structure belongs to this class, i.e., it stands for a sequence of planted plane trees. Likewise,  $\mathcal{S}eq(\mathcal{S}eq(\mathcal{T})\times\{\bullet\}\times\mathcal{S}eq(\mathcal{T}))$ represents a path of left-or-right planted plane trees.}
\label{fig:principle_plantedplane}
\end{figure}

The splitting node can as well have a sequence of plane trees attached, that do not contain any embedded nodes, yielding a factor $\frac{1}{1-T(z)}$, but at some point there has to appear the first plane tree that contains some embedded nodes (pictured in blue in Figure  \ref{fig:principle_plantedplane}). All subtrees attached to the splitting node that are to the right of this blue one are comprised in one plane tree (pictured in green in Figure  \ref{fig:principle_plantedplane}). 
Now we have to distinguish between the cases where a different number of the subtrees $S_1, \ldots, S_{k}$ are embedded in the left (\textit{i.e.} the blue) subtree, while the remaining ones are embedded in the right (\textit{i.e.} the green) tree. These case distinctions give rise to the recursion~\eqref{equ:GF_plantedplane_As} for the generating function. The first two summands of the last case in \eqref{equ:GF_plantedplane_As}, \textit{i.e.} the case $k>2$, represent the cases where one of the $S_i$'s is embedded in a separate subtree:
\begin{itemize}
\item Solely $S_1$ is embedded in the left tree.
In this case we count all embeddings of $S_1$ in the left subtree, giving a factor $A_{S_1}(z)$, while in the right subtree we count exclusively the good embeddings of $S_{2,k} = (\bullet, 
S_2,\ldots, S_k)$, since the splitting node has to be the embedded root of $S_{2,k}$ in order to prevent multiple embeddings of the root.
We already know that the generating function of good embeddings is obtained from the generating function of all embeddings by multiplication with $\frac{1-2T(z)}{1-T(z)}$ (corresponding to 1 divided by the generating function of the starting path) and thus we get the factor $\frac{1-2T(z)}{1-T(z)} A_{S_{2,k}}(z)$.   
\item Solely $S_k$ is embedded in the right tree.
Here we count the good embeddings of $S_{1,k-1}$ in the left tree, as this is general necessary for all cases where we consider more than just one of the $S_i$'s to be embedded in the same subtree. However, in this case we have to count only the bad embeddings of $S_{k}$ in the right tree, since no node of $S$ can be embedded into the splitting node, except the root of $S$, but then the embedding of $S_k$ is still a bad embedding into the green tree. Altogether this yields the factor $\frac{1-2T(z)}{1-T(z)} \frac{T(z)}{1-T(z)} A_{S_{1,k-1}}(z) A_{S_k}(z)$.
\end{itemize}
In all other cases where we embed at least two of the subtrees $S_1, \ldots, S_k$ in both the left
and the right (\textit{i.e.} the blue and the green) subtree, we consider good embeddings for both
subtrees (the blue and the green one, which are then becoming two trees where we embed $S_{1,i}$
and $S_{i+1,k}$, respectively) yielding a factor $\left( \frac{1-2T(z)}{1-T(z)} \right)^2$.
Together with the factors from the two paths, the embedded root and the sequence of plane trees we get the desired coefficients.

The cases $k=1$ and $k=2$ can be treated in the exact same way as we just did for $k>2$. However, note that in the case $k=1$ the green (\textit{i.e.} the right) tree and two of the sequences of planted plane trees are merged such that we end up with one path, the embedded root of $S$ together with its two sequences of planted plane trees and finally the attached blue tree that contains the embedding of the only subtree $S_1$. This yields the factor
\[ \frac{1-T(z)}{1-2T(z)} \frac{T(z)}{1-T(z)} A_{S_1}(z).\]
In the case $k=2$ we have the pre-factor $\frac{T(z)}{(1-2T(z))^2}$ that covers the two paths, the embedded root node with its attached sequences of plane trees and the sequence of plane trees that is attached to the splitting node. Now there is just one splitting option: $S_1$ has to embedded in the left tree, where we consider all embeddings, and $S_2$ has to be embedded in the right tree, where we solely count the bad embeddings of $S_2$, since the splitting node must not be an embedded node. It is easy to verify that this case gives the factor
\[ \frac{T(z)^2}{(1-2T(z))^2(1-T(z))} {A_{{S_1}}}(z) {A_{{S_2}}}(z).\qedhere \]
\end{proof}

\begin{nrem}\label{rem_6}
Note that for the cases $k=0,1,2$ the generating function $A_S(z)$ of all embeddings of $S = 
(\bullet,S_1,\ldots,S_k)$ into the family $\mathcal{T}_n$ of planted planes trees of size $n$ given in \eqref{equ:GF_plantedplane_As} is of the form $f_k(T) \cdot A_{S_1}(z) \ldots A_{S_k}(z)$, where $f_k(T)$ is a function that depends only on $T(z)$ and on the size $k$ of $S$, but not on the specific shape of $S$. 
We want to emphasize that, by digging into the structure of $S$ and by recursive application of the formulas given in \eqref{equ:GF_plantedplane_As}, it follows that $A_S(z)$ is in fact of the form
\[ A_S(z) = f(T) \cdot A_{S_1}(z) \cdots A_{S_k}(z), \]
for arbitrary $S = (\bullet,S_1,\ldots,S_k)$.
 \end{nrem}

Now, we are in the position to obtain the asymptotic number of all and good embeddings of a given plane tree $S$ in the family of planted plane trees.

\begin{thm}
Consider a rooted tree $S$ of size $m$ with degree distribution sequence $d_S=(l,d_1,d_2, \dots,
d_{m-1})$. Let $C = \prod_{i=1}^{m-1} (\boldsymbol{C}_{i-1})^{d_i}$. The asymptotics of the number
of all embeddings of $S$ into $\mathcal{T}_n$ is given by
\begin{align*}
a_{\mathcal{T}_n}(S) \sim \frac{C \cdot (\frac{1}{2})^{m+l}}{\Gamma(\frac{m+l-1}{2})} \cdot 4^n \cdot n^{\frac{m+l-3}{2}}.
\end{align*}
The asymptotics of the number of good embeddings of $S$ into $\mathcal{T}_n$ is given by
\begin{align*}
g_{\mathcal{T}_n}(S) \sim \frac{2 C \cdot (\frac{1}{2})^{m+l}}{\Gamma(\frac{m+l-2}{2})} \cdot 4^n \cdot n^{\frac{m+l-4}{2}}.
\end{align*}
\end{thm}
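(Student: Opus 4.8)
The plan is to follow exactly the same singularity-analysis strategy used in the proof of Theorem~\ref{thm_asymptotics} for the plane binary case, since the planted plane case is structurally parallel. First I would establish the closed form of the generating function $A_S(z)$. By Remark~\ref{rem_6}, for arbitrary $S = (\bullet, S_1, \ldots, S_k)$ the function factors as $A_S(z) = f(T)\cdot A_{S_1}(z)\cdots A_{S_k}(z)$, so by recursively unwinding \eqref{equ:GF_plantedplane_As} one obtains a product form. The key is to track how each type of node contributes: each leaf contributes a factor $zT'(z) = \frac{T(z)(1-T(z))}{1-2T(z)}$, and the internal structure contributes powers of $\frac{1}{1-2T(z)}$ together with a combinatorial constant $C = \prod_{i=1}^{m-1}(\boldsymbol{C}_{i-1})^{d_i}$ coming from the $\boldsymbol{C}_{i-1}$ ways of replacing each $i$-ary node by a binary skeleton, exactly as in the plane binary proof. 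I expect that, after collecting terms, $A_S(z)$ will be of the form $\left(\frac{1}{1-2T(z)}\right)^{m+l-1}$ times powers of $T(z)$ and $z$ and the constant $C$, mirroring the structure of Theorem~1.

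Next I would perform the singularity analysis. The generating function $T(z) = \frac{1-\sqrt{1-4z}}{2}$ has a single dominant square-root singularity at $z = \rho = 1/4$, where $T(1/4) = 1/2$. The crucial observation is that $1 - 2T(z) = \sqrt{1-4z}$, so the factor $\frac{1}{1-2T(z)} = (1-4z)^{-1/2}$ blows up like $(1 - z/\rho)^{-1/2}$ as $z\to 1/4$. Therefore $A_S(z)$ behaves like a constant times $\left(1-\frac{z}{\rho}\right)^{-\frac{m+l-1}{2}}$ near $z = 1/4$, after substituting $T(1/4) = 1/2$ into all the other (analytic, bounded) factors. I would compute the leading constant $K_0$ explicitly by evaluating the product of the non-singular factors at $z = 1/4$; the powers of $1/2$ arising from $T(\rho)$, the $z$-powers evaluated at $1/4$, and the constant $C$ should combine to give the stated prefactor. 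Unlike the binary cases, here $\rho = 1/4$ is the unique dominant singularity (there is no $\pm\rho$ periodicity, since $|\mathcal{T}_n| = \boldsymbol{C}_{n-1}$ is nonzero for all $n$), so Lemma~\ref{l_coeff_asym} applies with $r=0$, yielding $[z^n]A_S(z) \sim \frac{K_0}{\Gamma(\frac{m+l-1}{2})} \cdot 4^n \cdot n^{\frac{m+l-3}{2}}$, which matches the claimed asymptotics for $a_{\mathcal{T}_n}(S)$.

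For the good embeddings, I would invoke the analogue of the Corollary to Theorem~1: the generating function $G_S(z)$ differs from $A_S(z)$ only by removing the initial path above the embedded root, which corresponds to multiplication by $(1-2T(z)) = \sqrt{1-4z}$. This lowers the exponent of the singularity by $1/2$, so $G_S(z) \sim K_0'\left(1-\frac{z}{\rho}\right)^{-\frac{m+l-2}{2}}$ near $z=1/4$, and another application of Lemma~\ref{l_coeff_asym} gives $g_{\mathcal{T}_n}(S) \sim \frac{K_0'}{\Gamma(\frac{m+l-2}{2})}\cdot 4^n\cdot n^{\frac{m+l-4}{2}}$. The factor of $2$ in the stated good-embedding asymptotics relative to the naive ratio should emerge from the precise bookkeeping of the constant.

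The main obstacle I anticipate is verifying that the recursive formula \eqref{equ:GF_plantedplane_As}, with its intricate case distinction for $k>2$ (the three summands encoding which subtrees land in the ``blue'' versus ``green'' parts), indeed collapses upon full recursive expansion into the clean product form with the single aggregate exponent $m+l-1$ and the constant $C = \prod_{i=1}^{m-1}(\boldsymbol{C}_{i-1})^{d_i}$. Establishing this requires an inductive argument over the structure of $S$, showing that each $i$-ary node with $i \geq 2$ contributes precisely the factor $\boldsymbol{C}_{i-1}$ and the expected power of $\frac{1}{1-2T(z)}$, and that the bad/good-embedding case splits sum up correctly so that no cross terms survive to alter the leading singular behaviour. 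Once the product form is secured, the remaining singularity analysis is routine and entirely parallel to the plane binary proof.
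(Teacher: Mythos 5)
Your proposal has the right scaffolding (recursion for $A_S(z)$, singular expansion at the unique dominant singularity $z=1/4$, transfer via Lemma~\ref{l_coeff_asym} with $r=0$), but the two places where it deviates from the paper are exactly the places where the real work lies, and in both the proposal has a genuine gap. First, the constant $C=\prod_{i\ge 2}(\boldsymbol{C}_{i-1})^{d_i}$: you justify it by ``the $\boldsymbol{C}_{i-1}$ ways of replacing each $i$-ary node by a binary skeleton, exactly as in the plane binary proof,'' and you yourself flag the collapse of \eqref{equ:GF_plantedplane_As} into a product form as the main obstacle --- but the skeleton-replacement argument does not transfer to $\mathcal{T}_n$. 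In a binary host the $i$ children of an $i$-ary node of $S$ are forced apart by $i-1$ binary branchings, so summing over the $\boldsymbol{C}_{i-1}$ resolutions is an exact count; in a planted plane host, several children can land in distinct subtrees of a \emph{single} host node of high degree, so the splitting skeleton is a general plane tree and summing over binary resolutions overcounts (one such embedding corresponds to several resolutions). The paper closes this gap analytically rather than bijectively: it sets $f_k(z)=A_S(z)/\prod_i A_{S_i}(z)$, derives from \eqref{equ:GF_plantedplane_As} the recurrence $f_k=\frac{T}{(1-T)^2}\sum_{j=1}^{k-1}f_jf_{k-j}$ (with the normalization $f_1=\frac{1}{2(1-2T)}$), renormalizes $g_k=(1-2T)^kf_k$ so the $g_k$ are regular at $z=1/4$, and evaluates there to find that $h_k=2g_k(1/4)$ satisfies the Catalan recurrence $h_k=\sum_j h_jh_{k-j}$, giving $f_k\sim\frac12\boldsymbol{C}_{k-1}(1-4z)^{-k/2}$. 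Without this step (or an equivalent induction that actually engages the three-way case split in \eqref{equ:GF_plantedplane_As}) the constant $C$ and the exponent $\frac{m+l-1}{2}$ are asserted, not proved; note also that the exact $A_S(z)$ is not of the clean form you conjecture (factors of $1-T(z)$ appear, and $C$ emerges only in the singular limit).

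Second, the good-embedding factor is wrong: you multiply by $(1-2T(z))=\sqrt{1-4z}$, importing the binary-case path generating function $\frac{1}{1-2zB(z)}$. In $\mathcal{T}_n$ the path above the embedded root is a sequence of left-or-right \emph{planted plane} trees with generating function $\frac{1}{1-z/(1-T(z))^2}=\frac{1-T(z)}{1-2T(z)}$, so $G_S(z)=\frac{1-2T(z)}{1-T(z)}A_S(z)$, and $\frac{1-2T(z)}{1-T(z)}=2\frac{\sqrt{1-4z}}{1+\sqrt{1-4z}}\sim 2\sqrt{1-4z}$. With your factor the constant for $g_{\mathcal{T}_n}(S)$ comes out half the stated value; the ``factor of $2$'' you hope will ``emerge from precise bookkeeping'' is precisely $\frac{1}{1-T(1/4)}=2$, and deferring it is not a proof.
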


\begin{proof}
Triggered by the observation in Remark~\ref{rem_6}, let us set 
\begin{equation}\label{f_rec}
f_1(z) = \frac{1}{2(1-2T(z))},\ \text{ and } \ f_k(z) = \frac{A_{S}(z)}{\prod_{i=1}^k
A_{S_i}(z)} \ \text{ for } k >1.
\end{equation}
Then \eqref{equ:GF_plantedplane_As} immediately gives $f_2(z) = T(z)^2/((1-2T(z))^2(1-T(z)))$.

Next, consider the last equation of \eqref{equ:GF_plantedplane_As} (the case $k\ge 3$) and observe
that all generating functions on the right-hand side which are associated with a composite
structure are of the form $A_{S_{i,j}}(z)$, where the root of $S_{i,j}$ has degree at least two. Thus,
dividing the equation by $\prod_{i=1}^k A_{S_i}(z)$ (and cancelling out all single $A_{S_j}(z)$) 
yields only quotients which can be readily turned into $f_\ell(z)$ with suitable choices of
$\ell$, because the case $\ell=1$ does not appear here. A straight-forward simplification then
gives 
\begin{align}
\label{equ:f_k_rec}
f_k(z) = 
\frac{T(z)}{(1-T(z))^2} \sum_{j=1}^{k-1} f_j(z) f_{k-j}(z) \ \text{ for } k \geq 3.
\end{align}
Both sides of this equation tend to infinity, as $z\to 1/4$, and we need their singular behaviour
for our analysis of $A_S(z)$. Hence, we set $g_k(z)=(1-2T(z))^k f_k(z)$ for $k\ge 1$. Plugging
this into \eqref{equ:f_k_rec} we observe that $g_k(z)$ satisfies the same recurrence as $f_k(z)$,
but with the initial values $g_1(z)=1/2$ and $g_2(z)=T(z)^2/(1-T(z))$. As $T(1/4)=1/2$, the
functions $g_k(z)$ are regular at $z=1/4$. 
By evaluating the recurrence at $z=1/4$ and setting $h_k := 2g_k(1/4)$, we get 
a recurrence for $h_k$, which is in fact already valid for $k\ge2$: 
\begin{align*}
h_1=1 \quad \text{ and } \quad h_k = \sum_{j=1}^{k-1} h_j h_{k-j} \ \text{ for } k\ge 2. 
\end{align*}
This is exactly the recurrence for the Catalan numbers, and thus, $h_k = \boldsymbol{C}_{k-1}$.

Hence, for $z \to 1/4$ and $k \geq 2$ we have
\begin{align*}
f_k(z) \sim \frac{1}{2} \boldsymbol{C}_{k-1} (1-4z)^{-k/2},
\end{align*}
which implies that as $z\to 1/4$ we have
\begin{align*}
A_{S}(z) \sim \frac{\boldsymbol{C}_{k-1}}{2} (1-4z)^{-k/2} A_{S_1}(z) \ldots A_{S_k}(z) = \left( \prod_{i=1}^{m-1} \left( \frac{\boldsymbol{C}_{i-1}}{2} (1-4z)^{-i/2} \right)^{d_i} \right) \left( A_{\bullet} (z)\right)^l,
\end{align*}
where $S = (\bullet,S_1,\ldots,S_k)$, $d_i$ denotes the number of nodes with out-degree $i$, $l$
denotes the number of leaves, \textit{i.e.} $l=d_0$, and the equation follows from recursively
going into the subtrees $S_1,\ldots,S_k$ and using \eqref{f_rec} until one encounters a leaf of
$S$. Then each leaf yields a factor $A_{\bullet} (z)$.  
Using the equality $A_{\bullet}(z) = zT'(z)\sim \frac12(1-4z)^{-1/2}$, which follows from
\eqref{equ:GF_plantedplane_As} and the singular expansion of $T(z)$, we get for $ z \to \frac{1}{4}$
\begin{align}
\label{equ:asymp_expansion_As}
A_{S}(z) \sim \left( \prod_{i=1}^{m-1} \left( \frac{\boldsymbol{C}_{i-1}}{2} \right)^{d_i} \right) (1-4z)^{-\left(l + \sum_{i=1}^{m-1} id_i \right)/2} \left( \frac{1}{4} \right)^l.  
\end{align}
Note that $\sum_{i=1}^{m-1} id_i = m-1$, since every vertex with out-degree $i$ is counted exactly $i$ times and thus, we simply obtain the total number of nodes with in-degree greater than zero (\textit{i.e.} all nodes except for the root). We also have $\sum_{i=1}^{m-1} d_i = m-l$ thus (recall that $C = \prod_{i=1}^{m-1} (\boldsymbol{C}_{i-1})^{d_i}$)
\[
\left( \frac{1}{4} \right)^l \cdot \prod_{i=1}^{m-1} \left( \frac{\boldsymbol{C}_{i-1}}{2} \right)^{d_i} = C \left( \frac{1}{2} \right)^{m+l}.
\]
Finally, Lemma~\ref{l_coeff_asym} gives
\begin{align*}
a_{\mathcal{T}_n}(S) \sim \frac{C \cdot (\frac{1}{2})^{m+l}}{\Gamma(\frac{m+l-1}{2})} \cdot 4^n \cdot n^{\frac{m+l-3}{2}}.
\end{align*}

The generating function of the number of good embeddings can be derived from the generating function $A_{S}(z)$ by multiplication by the factor $\frac{1-2T(z)}{1-T(z)}$. This factor is responsible for getting rid of the path of trees which could appear above embedded root of $S$ when we were considering all embeddings. Thus we have $G_{S}(z) = \frac{1-2T(z)}{1-T(z)} A_{S}(z)$. Noticing that 
\begin{align*}
\frac{1-2T(z)}{1-T(z)} = 2 \frac{\sqrt{1-4z}}{1+\sqrt{1-4z}},
\end{align*}
using \eqref{equ:asymp_expansion_As} and applying Lemma~\ref{l_coeff_asym} yields the desired result.
\end{proof}


\begin{ncor} \label{cor_ratio_plane_planted}
Consider a rooted tree $S$ of size $m$ with $l$ leaves. The asymptotic ratio of the number of good
embeddings of $S$ into $\mathcal{T}_n$ to the number of all embeddings in  $\mathcal{T}_n$ is given by
$$
\frac{g_{\mathcal{T}_n}(S)}{a_{\mathcal{T}_n}(S)} \sim \left\{  \begin{array}{ll}
\frac{2 \Gamma(\frac{m+l-1}{2})}{\Gamma(\frac{m+l-2}{2}) \sqrt{n}} & \textrm {if} \hspace{10pt} m > 1,\\
1/n & \textrm {if} \hspace{10pt} m = 1.
\end{array}\right.
$$
\end{ncor}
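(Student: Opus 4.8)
The plan is to derive the asymptotic ratio directly from the two asymptotic expressions established in the immediately preceding theorem, since the statement is purely a consequence of dividing $g_{\mathcal{T}_n}(S)$ by $a_{\mathcal{T}_n}(S)$ and simplifying. This exactly mirrors the structure of the earlier Corollary~\ref{cor_ratio} for the plane binary case, whose proof was the single line ``follows immediately from Theorem.'' I expect no genuine obstacle here; the only care needed is in tracking the constants and in correctly separating the generic case $m>1$ from the degenerate case $m=1$.

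First I would treat the generic case $m>1$. By the theorem we have
$$
a_{\mathcal{T}_n}(S) \sim \frac{C \cdot (\tfrac{1}{2})^{m+l}}{\Gamma(\tfrac{m+l-1}{2})} \cdot 4^n \cdot n^{\frac{m+l-3}{2}}, \qquad
g_{\mathcal{T}_n}(S) \sim \frac{2C \cdot (\tfrac{1}{2})^{m+l}}{\Gamma(\tfrac{m+l-2}{2})} \cdot 4^n \cdot n^{\frac{m+l-4}{2}}.
$$
Taking the quotient, the common factors $C\cdot(\tfrac12)^{m+l}$ and $4^n$ cancel, the powers of $n$ combine to $n^{(m+l-4)/2-(m+l-3)/2}=n^{-1/2}$, and the Gamma functions invert to give the ratio $2\,\Gamma(\tfrac{m+l-1}{2})/\Gamma(\tfrac{m+l-2}{2})$. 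Hence
$$
\frac{g_{\mathcal{T}_n}(S)}{a_{\mathcal{T}_n}(S)} \sim \frac{2\,\Gamma(\tfrac{m+l-1}{2})}{\Gamma(\tfrac{m+l-2}{2})\,\sqrt{n}},
$$
as claimed. One must note that $g_{\mathcal{T}_n}(S)$ and $a_{\mathcal{T}_n}(S)$ are both zero unless the parity of $n$ is compatible, but here planted plane trees exist for every $n$, so no parity restriction enters and the asymptotic quotient is well-defined for all large $n$.

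For the degenerate case $m=1$, the tree $S$ is a single vertex, so $l=1$ and $m+l-2=0$; the formula above would involve $\Gamma(0)$ and must be handled separately. Here every embedding of a single vertex is just a marked node of the underlying tree, so $g_{\mathcal{T}_n}(S)=|\mathcal{T}_n|=\boldsymbol{C}_{n-1}$, while $a_{\mathcal{T}_n}(S)=n\,|\mathcal{T}_n|=n\,\boldsymbol{C}_{n-1}$, exactly as in the single-vertex discussion at the end of the plane binary theorem. Their ratio is therefore precisely $1/n$, which is the second branch of the stated formula. Combining the two cases yields the corollary, so the proof amounts to the short computation above together with this elementary observation for the single vertex.
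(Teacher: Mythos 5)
Your proposal is correct and follows essentially the same route as the paper: the case $m>1$ is an immediate division of the two asymptotic formulas from the preceding theorem (mirroring the one-line proof of the analogous corollary in the plane binary case), and the case $m=1$ is handled exactly as the paper does for binary trees, by noting that $g_{\mathcal{T}_n}(\bullet)=|\mathcal{T}_n|$ and $a_{\mathcal{T}_n}(\bullet)=n|\mathcal{T}_n|$, giving the exact ratio $1/n$. Your remark that no parity restriction arises for $\mathcal{T}_n$ is also accurate.
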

\begin{thm}
Let $S_1$, $S_2$ be rooted trees such that $S_1 \subseteq S_2$. Then
$$
\lim_{n \rightarrow \infty} \sqrt{n} ~ \frac{g_{\mathcal{T}_n}(S_1)}{a_{\mathcal{T}_n}(S_1)} \leq \lim_{n \rightarrow \infty} \sqrt{n} ~ \frac{g_{\mathcal{T}_n}(S_2)}{a_{\mathcal{T}_n}(S_2)}.
$$
\end{thm}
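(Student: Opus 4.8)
The plan is to mirror the proof of Theorem~\ref{thm_asm_ratio}, since Corollary~\ref{cor_ratio_plane_planted} furnishes for planted plane trees exactly the analogue of the asymptotic ratio used there. Writing $d_{S_i}=(l_i,\dots)$, setting $k_i=\frac{m_i+l_i-2}{2}$ (with $m_i$ the size of $S_i$), and assuming $k_1>0$ (the case $k_1=0$, i.e.\ $m_1=1$, is trivial, because then Corollary~\ref{cor_ratio_plane_planted} gives a ratio of order $1/n$, so $\lim_{n\to\infty}\sqrt n\,g_{\mathcal{T}_n}(S_1)/a_{\mathcal{T}_n}(S_1)=0$, which is dominated by the nonnegative right-hand limit), Corollary~\ref{cor_ratio_plane_planted} yields
\begin{equation*}
\lim_{n\to\infty}\sqrt n\,\frac{g_{\mathcal{T}_n}(S_i)}{a_{\mathcal{T}_n}(S_i)}=\frac{2\,\Gamma\!\left(\frac{m_i+l_i-1}{2}\right)}{\Gamma\!\left(\frac{m_i+l_i-2}{2}\right)}=2f(k_i),\qquad f(k):=\frac{\Gamma(k+1/2)}{\Gamma(k)}.
\end{equation*}
Thus the claim reduces to showing $f(k_1)\le f(k_2)$, and the constant factor $2$ (replacing the factor $\sqrt2$ of the plane binary case) plays no role.

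First I would note that $k_1$ and $k_2$, together with $k_1+1/2$ and $k_2+1/2$, all lie in the half-integer lattice $\{\frac12,1,\frac32,2,\ldots\}$, exactly as in Theorem~\ref{thm_asm_ratio}: for any rooted tree $m+l-2$ is a nonnegative integer, so each of these quantities is a positive multiple of $1/2$. On this set the function $f$ was already shown to be increasing in the proof of Theorem~\ref{thm_asm_ratio}, by applying Gautschi's inequality (Lemma~\ref{l_Gautschi}) twice to obtain $f(k+1/2)/f(k)>k^{1/2}(k+1/2)^{1/2}>1$ for $k\ge1/2$; nothing in that computation is specific to the binary plane setting, so it may be reused verbatim here.

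It then remains to verify that $S_1\subseteq S_2$ forces $k_1\le k_2$, equivalently $m_1+l_1\le m_2+l_2$. This is the same structural step as in Theorem~\ref{thm_asm_ratio}: an embedding of $S_1$ into $S_2$ immediately gives $m_1\le m_2$, while the number of leaves of a tree equals the size of its largest antichain, so the $l_1$-element antichain of leaves of $S_1$ embeds as an antichain of $S_2$ and hence $l_1\le l_2$. Combining these yields $k_1\le k_2$, and by the monotonicity of $f$ we conclude $f(k_1)\le f(k_2)$, which is the assertion. I do not expect a genuine obstacle: the only points requiring care are the lattice membership of the $k_i$ and the antichain argument for $l_1\le l_2$, both of which are already settled in the proof of Theorem~\ref{thm_asm_ratio}, so the present proof is essentially identical to that one with Corollary~\ref{cor_ratio_plane_planted} in place of Corollary~\ref{cor_ratio}.
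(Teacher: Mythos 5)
Your proposal is correct and coincides with the paper's own proof: the paper likewise invokes Corollary~\ref{cor_ratio_plane_planted} to write the limits as $\frac{2\Gamma(k_i+1/2)}{\Gamma(k_i)}$ and then simply declares the rest ``analogous to the proof of Theorem~\ref{thm_asm_ratio}'', which is exactly the Gautschi-monotonicity of $f(k)=\Gamma(k+1/2)/\Gamma(k)$ on half-integers plus the antichain argument giving $m_1+l_1\le m_2+l_2$ that you spell out. One small nitpick: your compressed claim that $k^{1/2}(k+1/2)^{1/2}>1$ for all $k\ge 1/2$ is not what the paper shows (it obtains $>1$ only for $k>\frac{\sqrt{17}-1}{4}\approx 0.78$ and treats $k=1/2$ separately via $f(1)/f(1/2)=\pi/2$), but since you are merely reusing the monotonicity already established there, this does not affect the correctness of your argument.
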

\begin{proof}
By Corollary \ref{cor_ratio_plane_planted} we get that for any $S$ with $d_S = (l,u,d_2, \ldots, d_{m-1})$ 
$$
\lim_{n \rightarrow \infty} \sqrt{n} ~ \frac{g_{\mathcal{T}_n}(S)}{a_{\mathcal{T}_n}(S)} = \frac{2 \Gamma(k+1/2)}{\Gamma(k)}.
$$
where $k = \frac{m+l-2}{2} > 0$. The rest of the proof is then analogous to the proof of Theorem \ref{thm_asm_ratio}.
\end{proof}
\begin{ncor}
	Let $S_1$, $S_2$ be rooted trees such that $S_1 \subseteq S_2$. Then for sufficiently large $n$
	$$
	\frac{g_{\mathcal{T}_n}(S_1)}{a_{\mathcal{T}_n}(S_1)} \leq \frac{g_{\mathcal{T}_n}(S_2)}{a_{\mathcal{T}_n}(S_2)}.
	$$
	(Compare Theorem \ref{thm_ratio} and its proof.)
\end{ncor}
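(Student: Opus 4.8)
The plan is to argue by contradiction, transcribing the proof of Theorem~\ref{thm_ratio} almost verbatim, since the planted plane case has exactly the same analytic structure. First I would suppose that $S_1 \subseteq S_2$ yet the asserted inequality fails along an increasing sequence $n_0 < n_1 < n_2 < \dots$, so that
$$
\frac{g_{\mathcal{T}_n}(S_1)}{a_{\mathcal{T}_n}(S_1)} > \frac{g_{\mathcal{T}_n}(S_2)}{a_{\mathcal{T}_n}(S_2)} \qquad \text{for all } n \in \{n_i \mid i \in \mathbb{N}\}.
$$
Multiplying by $\sqrt{n}$ and passing to the limit along this subsequence gives the weak reverse inequality between the (existing) limits, while the theorem immediately preceding this corollary supplies the forward inequality; hence the two limiting constants must coincide.

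Next I would make those constants explicit. Writing $k_i = \frac{m_i + l_i - 2}{2}$ for $i=1,2$ (with $m_i, l_i$ the size and number of leaves of $S_i$), Corollary~\ref{cor_ratio_plane_planted} gives
$$
\lim_{n \to \infty} \sqrt{n}\, \frac{g_{\mathcal{T}_n}(S_i)}{a_{\mathcal{T}_n}(S_i)} = \frac{2\,\Gamma(k_i + 1/2)}{\Gamma(k_i)},
$$
so the equality of limits reads $f(k_1) = f(k_2)$ for $f(k) = \Gamma(k+1/2)/\Gamma(k)$. The degenerate case $m_1 = 1$ (that is, $k_1 = 0$, a single embedded vertex) is immediate, since then the ratio for $S_1$ is $\sim 1/n$, which is asymptotically below the $\Theta(1/\sqrt{n})$ ratio of any larger $S_2$. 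Otherwise $k_1, k_2$ lie in the discrete set $\{\tfrac12, 1, \tfrac32, 2, \dots\}$ exactly as in the binary case (the only difference being the absence of the parity constraint on $m$, which plays no role here), so the monotonicity of $f$ on this set established in the proof of Theorem~\ref{thm_asm_ratio} applies unchanged and yields $k_1 = k_2$, equivalently $m_1 + l_1 = m_2 + l_2$.

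Finally I would close the loop exactly as before. Since $S_1 \subseteq S_2$ we have $m_1 \le m_2$ trivially, and $l_1 \le l_2$ because the number of leaves equals the size of the largest antichain and an embedding carries the leaf-antichain of $S_1$ to an antichain of $S_2$. Combined with $m_1 + l_1 = m_2 + l_2$ this forces $m_1 = m_2$ and $l_1 = l_2$; a size-preserving embedding of $S_1$ into $S_2$ is a bijection, so $S_1$ and $S_2$ are isomorphic and their ratios agree for every $n$, contradicting the strict inequality along the subsequence.

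I do not expect a genuine obstacle, as every ingredient has already been prepared and the argument is a direct transcription of Theorem~\ref{thm_ratio}. The one point deserving a line of care is confirming that the order-theoretic inequality $l_1 \le l_2$ still holds for planted plane (Catalan) trees viewed as posets; but this is the same leaf/antichain fact used in the binary case and does not rely on any bound on the arity, so it transfers directly.
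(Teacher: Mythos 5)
Your proof is correct and is essentially the paper's own argument: the corollary's proof is just the pointer ``compare Theorem~\ref{thm_ratio} and its proof,'' and you transcribe exactly that contradiction scheme --- equality of the limits of $\sqrt{n}$ times the ratios via Corollary~\ref{cor_ratio_plane_planted}, monotonicity of $\Gamma(k+1/2)/\Gamma(k)$ on $\{\tfrac12,1,\tfrac32,\dots\}$ forcing $m_1+l_1=m_2+l_2$, and then $m_1\le m_2$, $l_1\le l_2$ (leaf/antichain argument) forcing $S_1\cong S_2$, a contradiction. Your explicit handling of the degenerate case $m_1=1$ and your remark that the antichain argument is arity-independent are minor refinements of the same route, not a different approach.
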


\section{Discussion}  \label{sec_discussion}
We proved that the ratio of the number of good embeddings to the number of all embeddings of a
given tree $S = (\bullet,S_1,\ldots,S_k)$ into the families of trees
$\mathcal{B}_n, \mathcal{V}_n, \mathcal{T}_n$ is asymptotically of the same order for all the
three considered families of trees, namely plane binary trees, non-plane binary trees and planted
plane trees. Thereby we extended the results of Kubicki \emph{et
al.}~\cite{KLM_ratio_incr,KLM_AtoB_is_2tol} and Georgiou~\cite{Georgiou}. We expect that this
result will also hold for the family of P\'olya trees, which are the closest counterpart to posets
that admit a (rooted) treelike shape, \textit{i.e.} they have a single maximal element. In
principle, the approach that we used within this paper works for embeddings into the family of
P\'olya trees as well. However, one would have to consider all possible partitions of $S_1,
\ldots, S_k$, as any collection of isomorphic subtrees within $S_1,\ldots, S_k$ admits non-trivial
isomorphisms between the $S_i$'s, which can get rather involved and is therefore omitted in this
work. 


\bibliographystyle{plain}
\bibliography{tree_embeddings}

\begin{thebibliography}{10}

\bibitem{AS08}
Noga Alon and Joel~H. Spencer.
\newblock {\em The {P}robabilistic {M}ethod}.
\newblock Wiley-Interscience Series in Discrete Mathematics and Optimization.
  John Wiley \& Sons, Inc., Hoboken, NJ, third edition, 2008.

\bibitem{ABBG20}
Andrei Asinowski, Axel Bacher, Cyril Banderier, and Bernhard Gittenberger.
\newblock Analytic combinatorics of lattice paths with forbidden patterns, the
  vectorial kernel method, and generating functions for pushdown automata.
\newblock {\em Algorithmica}, 82(3):386--428, 2020.

\bibitem{kPaths}
Fabr{\'{\i}}cio~Siqueira Benevides and Ma{\l}gorzata Sulkowska.
\newblock Percolation and best-choice problem for powers of paths.
\newblock {\em J. Applied Probability}, 54(2):343--362, 2017.

\bibitem{B12}
Mikl\'{o}s B\'{o}na.
\newblock {\em Combinatorics of {P}ermutations}.
\newblock Discrete Mathematics and its Applications (Boca Raton). CRC Press,
  Boca Raton, FL, second edition, 2012.

\bibitem{CDKK08}
Fr\'{e}d\'{e}ric Chyzak, Michael Drmota, Thomas Klausner, and Gerard Kok.
\newblock The distribution of patterns in random trees.
\newblock {\em Combin. Probab. Comput.}, 17(1):21--59, 2008.

\bibitem{CDK19}
Gwendal Collet, Michael Drmota, and Lukas~D. Klausner.
\newblock Limit laws of planar maps with prescribed vertex degrees.
\newblock {\em Combin. Probab. Comput.}, 28(4):519--541, 2019.

\bibitem{DPTW12}
Michael Dairyko, Lara Pudwell, Samantha Tyner, and Casey Wynn.
\newblock Non-contiguous pattern avoidance in binary trees.
\newblock {\em Electron. J. Combin.}, 19(3), 2012.

\bibitem{D99}
Emeric Deutsch.
\newblock Dyck path enumeration.
\newblock {\em Discrete Math.}, 204(1-3):167--202, 1999.

\bibitem{D09}
Michael Drmota.
\newblock {\em Random {T}rees}.
\newblock Springer, Vienna-New York, 2009.

\bibitem{DG99}
Michael Drmota and Bernhard Gittenberger.
\newblock The distribution of nodes of given degree in random trees.
\newblock {\em J. Graph Theory}, 31(3):227--253, 1999.

\bibitem{DS20}
Michael Drmota and Benedikt Stufler.
\newblock Pattern occurrences in random planar maps.
\newblock {\em Statist. Probab. Lett.}, 158, 2020.

\bibitem{DY18}
Michael Drmota and Guan-Ru Yu.
\newblock The number of double triangles in random planar maps.
\newblock In {\em 29th {I}nternational {C}onference on {P}robabilistic,
  {C}ombinatorial and {A}symptotic {M}ethods for the {A}nalysis of
  {A}lgorithms}, volume 110 of {\em LIPIcs. Leibniz Int. Proc. Inform.} Schloss
  Dagstuhl. Leibniz-Zent. Inform., Wadern, 2018.

\bibitem{Ferguson}
Thomas~S. Ferguson.
\newblock Who solved the secretary problem?
\newblock {\em Statist. Sci.}, 4(3):282--289, 1989.

\bibitem{Flajolet_book}
Philippe Flajolet and Robert Sedgewick.
\newblock {\em Analytic Combinatorics}.
\newblock Cambridge University Press, 2009.

\bibitem{freeman1983secretary}
Peter~R. Freeman.
\newblock The secretary problem and its extensions: A review.
\newblock {\em Int. Stat. Rev.}, 51:189--206, 1983.

\bibitem{FW10}
Ragnar Freij and Johan W\"{a}stlund.
\newblock Partially ordered secretaries.
\newblock {\em Electron. Commun. Probab.}, 15:504--507, 2010.

\bibitem{F20+}
Anton Freund.
\newblock From {K}ruskal's theorem to {F}riedman's gap condition.
\newblock {\em Math. Struct. Comp. Sci.}, 30(8):952--975, 2020.

\bibitem{GM13}
Bryn Garrod and Robert Morris.
\newblock The secretary problem on an unknown poset.
\newblock {\em Random Struct. Algor.}, 43(4):429--451, 2013.

\bibitem{Gautschi_ineq}
Walter Gautschi.
\newblock Some elementary inequalities relating to the gamma and incomplete
  gamma function.
\newblock {\em J. Math. Phys. Camb.}, 38:77--81, 1959.

\bibitem{Georgiou}
Nicholas Georgiou.
\newblock Embeddings and other mappings of rooted trees into complete trees.
\newblock {\em Order}, 22(3):257--288, 2005.

\bibitem{gilbert2006recognizing}
John~P. Gilbert and Frederick Mosteller.
\newblock Recognizing the maximum of a sequence.
\newblock In {\em Selected Papers of Frederick Mosteller}, pages 355--398.
  Springer, 2006.

\bibitem{G06}
Bernhard Gittenberger.
\newblock Nodes of large degree in random trees and forests.
\newblock {\em Random Struct. Algor.}, 28(3):374--385, 2006.

\bibitem{Gnedin}
Alexander~V. Gnedin.
\newblock Multicriteria extensions of the best choice problem: Sequential
  selection without linear order.
\newblock {\em Contemp. Math.}, 125:153--172, 1992.

\bibitem{GMS15}
Andrzej Grzesik, Micha\l\ Morayne, and Ma{\l}gorzata Sulkowska.
\newblock From directed path to linear order---the best choice problem for
  powers of directed paths.
\newblock {\em SIAM J. Discrete Math.}, 29(1):500--513, 2015.

\bibitem{JLR00}
Svante Janson, Tomasz {\L}uczak, and Andrzej Ruci\'{n}ski.
\newblock {\em Random {G}raphs}.
\newblock Wiley-Interscience Series in Discrete Mathematics and Optimization.
  Wiley-Interscience, New York, 2000.

\bibitem{K13}
Wojciech Ka\'{z}mierczak.
\newblock The best choice problem for a union of two linear orders with common
  maximum.
\newblock {\em Discrete Appl. Math.}, 161(18):3090--3096, 2013.

\bibitem{KLM_ratio_incr}
Grzegorz Kubicki, Jen{\"{o}} Lehel, and Micha\l\ Morayne.
\newblock A ratio inequality for binary trees and the best secretary.
\newblock {\em Combin. Probab. Comput.}, 11(2):149--161, 2002.

\bibitem{KLM_AtoB_is_2tol}
Grzegorz Kubicki, Jen{\"{o}} Lehel, and Micha\l\ Morayne.
\newblock An asymptotic ratio in the complete binary tree.
\newblock {\em Order}, 20(2):91--97, 2003.

\bibitem{Kubicki_chains_antichains}
Grzegorz Kubicki, Jeno Lehel, and Micha\l\ Morayne.
\newblock Counting chains and antichains in the complete binary tree.
\newblock {\em Ars Combinatoria}, 79, 2006.

\bibitem{dir_path}
Grzegorz Kubicki and Micha{\l} Morayne.
\newblock Graph-theoretic generalization of the secretary problem: The directed
  path case.
\newblock {\em {SIAM} J. Discrete Math.}, 19(3):622--632, 2005.

\bibitem{Kuchta_chain1}
Ma{\l}gorzata Kuchta, Micha\l\ Morayne, and Jaros{\l}aw Niemiec.
\newblock Counting embeddings of a chain into a tree.
\newblock {\em Discrete Math.}, 297(1-3):49--59, 2005.

\bibitem{Kuchta_chain2}
Ma{\l}gorzata Kuchta, Micha\l\ Morayne, and Jaros{\l}aw Niemiec.
\newblock Counting embeddings of a chain into a binary tree.
\newblock {\em Ars Combinatoria}, 91, 2009.

\bibitem{li2016asymptotic}
Xueliang Li, Yiyang Li, and Yongtang Shi.
\newblock The asymptotic number of non-isomorphic rooted trees obtained by
  rooting a tree.
\newblock {\em J. Math. Anal. Appl.}, 434(1):1--11, 2016.

\bibitem{Lindley}
Dennis~V. Lindley.
\newblock Dynamic programming and decision theory.
\newblock {\em Appl. Stat. - J. Roy. St. C}, 10(1):39--51, 1961.

\bibitem{Morayne_complete}
Micha\l\ Morayne.
\newblock Partial-order analogue of the secretary problem - the binary tree
  case.
\newblock {\em Discrete Math.}, 184(1-3):165--181, 1998.

\bibitem{PS12}
Konstantinos Panagiotou and Makrand Sinha.
\newblock Vertices of degree {$k$} in random unlabeled trees.
\newblock {\em J. Graph Theory}, 69(2):114--130, 2012.

\bibitem{univ_poset}
John Preater.
\newblock The best-choice problem for partially ordered objects.
\newblock {\em Oper. Res. Lett.}, 25(4):187--190, 1999.

\bibitem{RS75}
Robert~W. Robinson and Allen~J. Schwenk.
\newblock The distribution of degrees in a large random tree.
\newblock {\em Discrete Math.}, 12(4):359--372, 1975.

\bibitem{R10}
Eric~S. Rowland.
\newblock Pattern avoidance in binary trees.
\newblock {\em J. Combin. Theory Ser. A}, 117(6):741--758, 2010.

\bibitem{S91}
Stephen~M. Samuels.
\newblock Secretary problems.
\newblock In {\em Handbook of sequential analysis}, volume 118 of {\em Statist.
  Textbooks Monogr.}, pages 381--405. Dekker, New York, 1991.

\bibitem{Stadje}
Wolfgang Stadje.
\newblock Efficient stopping of a random series of partially ordered points.
\newblock {\em Multiple Criteria Decision Making Theory and Application. Lect.
  Notes Econ. Math.}, 177:430--447, 1980.

\bibitem{S12}
Ma{\l}gorzata Sulkowska.
\newblock The best choice problem for upward directed graphs.
\newblock {\em Discrete Optim.}, 9(3):200--204, 2012.

\end{thebibliography}

\end{document}